\documentclass[english,11pt]{amsart}
\usepackage{comment}

\usepackage{float}
\usepackage{mathtools}
\usepackage{amstext}
\usepackage{amsthm}
\usepackage{amsmath}
\usepackage{amssymb}
\usepackage{algorithm}
\usepackage{geometry}
\geometry{verbose,tmargin=2.5cm,bmargin=2.5cm,lmargin=1.5cm,rmargin=1.5cm,footskip=1cm}

\usepackage[backref=page,colorlinks,citecolor=blue,bookmarks=true]{hyperref}\hypersetup{linkcolor=blue,  citecolor=blue, urlcolor=blue}

\usepackage{tikz}
\usetikzlibrary{shapes}

\usepackage{nicefrac,xfrac}

\floatname{algorithm}{Algorithm}

\makeatletter

\numberwithin{equation}{section}
\numberwithin{figure}{section}
\theoremstyle{plain}
\newtheorem{thm}{\protect\theoremname}[section]
\DeclareMathOperator*{\Ex}{\mathbb{E}}
\DeclareMathOperator*{\Pro}{\mathbb{P}}

\newtheorem{conj}[thm]{Conjecture}

\newtheorem{cor}[thm]{Corollary}
\newtheorem*{thm*}{\protect\theoremname}
\theoremstyle{definition}
\newtheorem{problem}[thm]{\protect\problemname}
\newtheorem*{problem*}{Problem}
\theoremstyle{remark}
\newtheorem*{rem*}{\protect\remarkname}
\theoremstyle{remark}
\newtheorem{rem}[thm]{\protect\remarkname}

\theoremstyle{definition}
\newtheorem{defn}[thm]{\protect\definitionname}
\theoremstyle{plain}
\newtheorem{prop}[thm]{\protect\propositionname}
\theoremstyle{plain}
\newtheorem{fact}[thm]{\protect\factname}
\theoremstyle{definition}

\theoremstyle{plain}
\newtheorem{lem}[thm]{\protect\lemmaname}
\theoremstyle{plain}
\newtheorem{claim}[thm]{Claim}
\theoremstyle{plain}
\usepackage[backref=page,colorlinks,citecolor=blue,bookmarks=true]{hyperref}\hypersetup{linkcolor=blue,  citecolor=blue, urlcolor=blue}
\usepackage[fontsize=11pt]{scrextend}

\usepackage[nobysame,abbrev,alphabetic]{amsrefs}
\usepackage{algorithmic}

\usepackage{microtype}

\floatname{algorithm}{Algorithm}

\setlength{\footskip}{20pt}

\let\originalleft\left
\let\originalright\right
\renewcommand{\left}{\mathopen{}\mathclose\bgroup\originalleft}
\renewcommand{\right}{\aftergroup\egroup\originalright}

\makeatother

\usepackage{babel}
\makeatletter
\addto\extrasfrench{%
   \providecommand{\fg}{\ifdim\lastskip>\z@\unskip\fi~\frqq}%
}

\makeatother
\addto\captionsenglish{\renewcommand{\definitionname}{Definition}}
\addto\captionsenglish{\renewcommand{\factname}{Fact}}
\addto\captionsenglish{\renewcommand{\lemmaname}{Lemma}}
\addto\captionsenglish{\renewcommand{\problemname}{Problem}}
\addto\captionsenglish{\renewcommand{\propositionname}{Proposition}}
\addto\captionsenglish{\renewcommand{\remarkname}{Remark}}
\addto\captionsenglish{\renewcommand{\theoremname}{Theorem}}
\addto\captionsfrench{}
\addto\captionsfrench{\renewcommand{\definitionname}{Definition}}
\addto\captionsfrench{\renewcommand{\factname}{Fait}}
\addto\captionsfrench{\renewcommand{\lemmaname}{Lemme}}
\addto\captionsfrench{\renewcommand{\problemname}{Problème}}
\addto\captionsfrench{\renewcommand{\propositionname}{Proposition}}
\addto\captionsfrench{\renewcommand{\remarkname}{Remarque}}
\addto\captionsfrench{\renewcommand{\theoremname}{Théorème}}
\providecommand{\definitionname}{Definition}
\providecommand{\factname}{Fact}
\providecommand{\lemmaname}{Lemma}
\providecommand{\problemname}{Problem}
\providecommand{\propositionname}{Proposition}
\providecommand{\remarkname}{Remark}
\providecommand{\theoremname}{Theorem}
\providecommand{\examplename}{Example}

\newcommand{\eps}{\varepsilon}

\newcommand{\Sym}{{\rm Sym}}

\newcommand{\vol}{{\rm Vol}}

\newcommand{\Id}{{\rm Id}}

\newcommand{\FF}{\mathbb{F}}
\newcommand{\ZZ}{\mathbb{Z}}
\newcommand{\RR}{\mathbb{R}}
\newcommand{\QQ}{\mathbb{Q}}

\newcommand{\cB}{\mathcal{B}}

\newcommand{\cG}{\mathcal{G}}
\newcommand{\cX}{\mathcal{X}}
\newcommand{\cY}{\mathcal{Y}}
\newcommand{\cZ}{\mathcal{Z}}

\newcommand{\Img}{\textrm{Im}}
\newcommand{\Ker}{\textrm{Ker}}

\newcommand{\raE}{\overrightarrow{E}}
\newcommand{\raP}{\overrightarrow{P}}

\title[Stability of Homomorphisms, Coverings and Cocycles]{Stability of Homomorphisms, Coverings and Cocycles  {II}:\\ Examples, Applications and Open problems}

\author[M.\ Chapman]{Michael Chapman}
\address{Michael Chapman\hfill\break
	Courant Institute of Mathematical Sciences\hfill\break
	New York University,\hfill\break 251 Mercer St, New York, NY 10012, USA.}
\email{mc9578@nyu.edu}

\author[A.\ Lubotzky]{Alexander Lubotzky}
\address{Alexander Lubotzky\hfill\break
	Weizmann institute of Science\hfill\break
	Rehovot, Israel.}
\email{alex.lubotzky@mail.huji.ac.il}

\begin{document}

\maketitle
\begin{small}
    \begin{center}
     \emph{Dedicated to Shmuel Weinberger on his $60^{\rm th}$ birthday}\\
\emph{with admiration and affection}
 \end{center}
\end{small}
 
\begin{abstract}
    Coboundary expansion (with $\FF_2$ coefficients), and variations on it, have been the focus of intensive research in the last two decades. It was used to study random complexes, property testing, and above all Gromov's topological overlapping property.

    In part I of this paper, we extended the notion of coboundary expansion (and its variations) to cochains with \textbf{permutation coefficients}, equipped with the normalized Hamming distance. We showed that this gives a unified language for studying covering stability of complexes, as well as stability of group homomorphisms --- a topic that drew a lot of attention in recent years.

    In this part, we extend the theory to the permutation coefficients setting. This gives some new results, even for $\FF_2$ coefficients, opens several new directions of research, and suggests a pattern to proving the existence of non-sofic groups. 
    Along the way, we solve the dimension $2$ case of a problem of Gromov,  exhibiting a family of bounded degree coboundary expanders with $\FF_2$ coefficients.  
\end{abstract}

\section{\textbf{Introduction}}
In Part I of this paper \cite{CL_part1}, we studied three stability problems --- that of group homomorphisms, coverings of polygonal complexes, and cocycles with permutation coefficients --- and proved they are all equivalent.  A main contribution of Part I was the definition of the \emph{cocycle} (and \emph{coboundary}) \emph{Cheeger constant} of a $2$-dimensional (polygonal) complex with coefficients in a \textbf{metric} group $\Gamma$.\footnote{The commonly used definition (cf. \cites{Dinur-Meshulam,dikstein2023coboundary}) of \emph{cocycle expansion} (usually misnomed \emph{cosystolic expansion}) with coefficients in a general group agrees with our definition when one assumes the group is  equipped with the \textbf{discrete distance} on it. Namely, every non-equal pair of elements in the coefficient group is $1$ distance apart. Thus, our definition is a generalization of these more classical notions.}  Specifically, we focused on the case where $\Gamma$ is a permutation group equipped with the normalized Hamming distance. The current paper further discusses this notion.

Coboundary  expansion with $\FF_2$ coefficients was intensively studied in the last two decades \cites{linial_meshulam2006homological,gromov2010singularities,kaufman2014high,kaufman2016isoperimetric,evra2016bounded,dikstein2023coboundary}.
It was originally defined, independently, by Linial--Meshulam  \cite{linial_meshulam2006homological} and  Gromov  \cite{gromov2010singularities}. Linial--Meshulam defined it in order to study the cohomology of random $2$-dimensional simplicial complexes, while Gromov's goal was to understand  topological overlapping. 
 In a companion paper \cite{CL_stability_Random_complexes}, the Linial--Meshulam approach is taken,  to study the cocycle and coboundary expansion of random simplicial complexes with permutation coefficients. In this paper, we follow the developments of Gromov \cite{gromov2010singularities} and his followers, substituting again the $\FF_2$ coefficients with permutations.\footnote{As $\Sym(2)\cong \FF_2$, the Cheeger constants with $\FF_2$ coefficients of a given  complex are bounded from below by its Cheeger constants with permutation coefficients, which provides further motivation to study the latter.}
To that end, we first recall the highlights of the theory in the $\FF_2$ setup, and only then describe the permutation setup, following with our contributions and challenges.
\newpage 
\subsection*{Topological overlapping, Coboundary expansion, Cocycle expansion and Large cosystoles}\label{sec:intro_top_overlap_expansion}
\subsubsection*{\textbf{The $\FF_2$ coefficients setup}}
Let $\cX$ be a simplicial complex of dimension $d$,  $\overrightarrow\cX(i)$ the set of oriented $i$-cells, $C^i(\cX,\FF_2)=\{f\colon \overrightarrow\cX(i)\to \FF_2\}$ the $i$-cochains of $\cX$ with $\FF_2$ coefficients, and $\delta=\delta_i\colon C^i(\cX,\FF_2)\to C^{i+1}(\cX,\FF_2)$ the coboundary map. 
Given a probability distribution $\mu_i$ over $\overrightarrow\cX(i)$, we can define a notion of distance between $i$-cochains by $d(\alpha,\beta)=\Pro_{\sigma\sim \mu_i}[\alpha(\sigma)\neq \beta(\sigma)],$ and thus a norm $\Vert \alpha\Vert=d(\alpha,{\bf 0})$, where ${\bf 0}$ is the constant $0$ cochain. 
We assume all our simplicial complexes are given with probability distributions $\mu_i$ on their oriented $i$-cells. 
Let $B^i(\cX,\FF_2)=\Img \delta_{i-1}\subseteq C^i(\cX,\FF_2)$ be the $i$-coboundaries of $\cX$ with $\FF_2$ coefficients and $Z^i(\cX,\FF_2)=\Ker\delta_i\subseteq C^i(\cX,\FF_2)$ the $i$-cocycles.

For $0\leq i< d$, the $i$-coboundary Cheeger constant of $\cX$ with $\FF_2$ coefficients is
    \begin{equation}\label{eq:intro_informal_coboundary_expansion_F2}
        h^B_i(\cX,\FF_2)=\inf\left\{\frac{\Vert \delta\alpha\Vert}{d(\alpha,B^i(\cX,\FF_2))}\ \middle|\  \alpha\in C^i(\cX,\FF_2)\setminus B^i(\cX,\FF_2)\right\}.
    \end{equation}
The $i$-cocycle Cheeger constant of $\cX$ with $\FF_2$ coefficients is
    \begin{equation}\label{eq:intro_informal_cocycle_expansion_F2}
        h_i(\cX,\FF_2)=\inf\left\{\frac{\Vert \delta\alpha\Vert}{d(\alpha,Z^i(\cX,\FF_2))}\ \middle|\  \alpha\in C^i(\cX,\FF_2)\setminus Z^i(\cX,\FF_2)\right\}.
    \end{equation}
The $i$-cosystole of $\cX$ with $\FF_2$ coefficients is 
 \begin{equation}\label{eq:intro_informal_large_cosystols_F2}
        CoSyst_i(\cX,\FF_2)=\inf\{\Vert \alpha \Vert \mid \alpha\in Z^i(\cX,\FF_2)\setminus B^i(\cX,\FF_2) \}.
    \end{equation}

Gromov proved in \cite{gromov2010singularities} that a family $\frak{X}=\{\cX\}$ of (finite, locally sparse) $d$-dimensional simplicial complexes satisfy the topological overlapping property --- i.e., for every continuous map $f\colon \cX\to \RR^d$, there exists a point $p\in \RR^d$ which is covered via $f$ by a  constant fraction of its  $d$-cells\footnote{The constant fraction should be uniform on $\frak{X}$, namely independent of the specific $\cX$.} --- if for every $0\leq i\leq d-1$ there is a uniform lower bound on their $i$-\textbf{coboundary} Cheeger constants with $\FF_2$ coefficients. 
He used this implication to prove the following:
\begin{thm}[\cites{gromov2010singularities}, see also \cite{linial_meshulam2006homological}]\label{thm:intro_gromov_complete_and_buildings}
    The family of complete $d$-dimensional complexes  have a uniform lower bound on  their $i$-coboundary Cheeger constants with $\FF_2$ coefficients. Thus,  they satisfy the topological overlapping property.  

    Similarly, $d$-dimensional spherical buildings  have a uniform lower bound on their coboundary Cheeger constants with $\FF_2$ coefficients (see \cite{LMM_2016expansion_building_like}), and  therefore also satisfy the topological overlapping property.
\end{thm}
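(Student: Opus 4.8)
\emph{Proof plan.} The theorem packages two separate statements --- one about the family of complete $d$-complexes, one about $d$-dimensional spherical buildings --- and in each case the topological overlapping conclusion is obtained by feeding a uniform positive lower bound on the $i$-coboundary Cheeger constants (for all $0\le i\le d-1$) into Gromov's implication recalled just above the theorem. For the spherical buildings the required bound is exactly the main estimate of \cite{LMM_2016expansion_building_like}, so I would simply cite it. Thus the only thing left to prove from scratch is that the complete $d$-dimensional complexes have $h^B_i(\cdot,\FF_2)$ bounded below by a positive constant that is independent of the number of vertices, for each $0\le i\le d-1$.

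I would do this with the classical cone (contracting homotopy) argument of Linial--Meshulam and Gromov. Let $\cX=\cX^{(d)}_n$ be the $d$-skeleton of the simplex on $n\ge d+1$ vertices, with the uniform probability measure on cells in every dimension, and fix $1\le i\le d-1$ (the case $i=0$ being the expansion of the complete graph $\cX^{(1)}_n$, or the same computation with the dimension $-1$ augmentation convention). For a vertex $v$ and $\alpha\in C^i(\cX,\FF_2)$, set $\mathrm{cone}_v(\alpha)\in C^{i-1}(\cX,\FF_2)$ by $\mathrm{cone}_v(\alpha)(\sigma)=\alpha(\{v\}\cup\sigma)$ when $v\notin\sigma$ and $\mathrm{cone}_v(\alpha)(\sigma)=0$ when $v\in\sigma$. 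Expanding $\delta$ on an $i$-cell $\tau$ and treating the cases $v\in\tau$ and $v\notin\tau$ separately yields the pointwise identity
\[
\alpha \;=\; \delta\,\mathrm{cone}_v(\alpha) \;+\; \mathrm{cone}_v(\delta\alpha) \qquad\text{in } C^i(\cX,\FF_2),
\]
so $\mathrm{cone}_v$ is a genuine contracting homotopy; in particular the coboundary $\delta\,\mathrm{cone}_v(\alpha)\in B^i(\cX,\FF_2)$ agrees with $\alpha$ except on those $i$-cells $\tau$ with $v\notin\tau$ and $(\delta\alpha)(\{v\}\cup\tau)\ne 0$, and therefore $d\bigl(\alpha,B^i(\cX,\FF_2)\bigr)\le \lVert\mathrm{cone}_v(\delta\alpha)\rVert$.

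Then I would average over a uniformly random vertex $v$. Every $(i+1)$-cell $\rho$ in the support of $\delta\alpha$ equals $\{v\}\cup\tau$ with $v\notin\tau$ for exactly $i+2$ pairs $(v,\tau)$, and weighing this against the total of $n\binom{n}{i+1}$ vertex/cell pairs gives $\Ex_v\lVert\mathrm{cone}_v(\delta\alpha)\rVert = \tfrac{n-i-1}{n}\lVert\delta\alpha\rVert$. Hence $d(\alpha,B^i(\cX,\FF_2))\le \min_v\lVert\mathrm{cone}_v(\delta\alpha)\rVert\le \tfrac{n-i-1}{n}\lVert\delta\alpha\rVert$ for every $\alpha\in C^i(\cX,\FF_2)\setminus B^i(\cX,\FF_2)$, so by \eqref{eq:intro_informal_coboundary_expansion_F2} we get $h^B_i(\cX^{(d)}_n,\FF_2)\ge \tfrac{n}{n-i-1}>1$, a bound uniform in $n$. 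Plugging these lower bounds (for all $0\le i\le d-1$) into Gromov's implication yields the topological overlapping property for the family of complete $d$-dimensional complexes.

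The substantive content here is imported rather than produced: the building estimate is the whole point of \cite{LMM_2016expansion_building_like}, and the passage from uniform coboundary expansion to topological overlapping is Gromov's theorem. The one place that deserves care in a write-up is that neither family is locally sparse (complete complexes are dense, and spherical buildings have unbounded local structure as the field or rank grows), whereas Gromov's implication was quoted above under a local sparsity hypothesis; here one should either appeal to the form of Gromov's argument that does not use sparsity, or, for the complete complexes, observe that the conclusion is the topological First Selection Lemma of B\'ar\'any and Pach, which also follows directly from the coboundary bound just established. Everything else --- the cone identity and the pair count --- is a routine verification.
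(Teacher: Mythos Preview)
Your proposal is correct and is precisely the classical Linial--Meshulam/Gromov cone (contracting homotopy) argument. The paper itself does not supply a proof of this theorem --- it is quoted as a result of \cites{gromov2010singularities,linial_meshulam2006homological,LMM_2016expansion_building_like} --- but the paper's own Proposition~\ref{prop:complete_complex_stable} later re-derives the $i=1$ case (with the stronger permutation coefficients) by exactly the same mechanism, phrased in non-abelian language: conjugating by a $0$-cochain to trivialise $\alpha$ on the star spanning tree at $x$ is the Sym analogue of your $\mathrm{cone}_x$, and averaging over $x$ is the same step that gives you $\tfrac{n-i-1}{n}$ (their $\tfrac{d-1}{d+1}$ for $n=d+1$, $i=1$). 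So your write-up is both correct and aligned with the paper's own method where the paper does argue; your version additionally handles all $0\le i\le d-1$ and makes the homotopy identity $\alpha=\delta\,\mathrm{cone}_v(\alpha)+\mathrm{cone}_v(\delta\alpha)$ explicit, which is cleaner than the paper's dimension-$1$ treatment. Your caveat about the local-sparsity hypothesis in Gromov's implication is well taken and worth keeping.
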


Having the topological overlapping property is a high dimensional version of being a family of expander graphs.  In this spirit, we use the notions of \emph{coboundary expander, cocycle expander} and \emph{toppological expander}  to indicate that a (family of) complex(es) has a bounded from zero coboundary Cheeger constant, cocycle Cheeger constant or satisfy the topological overlapping property, respectively.
In current terminology, Gromov's result says that $\FF_2$-coboundary expanders are topological expanders.

For graphs, the main difficulty is finding families of \textbf{bounded degree} expanders. Motivated by this analogy, Gromov posed the following problems:
\begin{problem}\label{prob:top_expanders}
    For $d\geq 2$, are there families of $d$-dimensional bounded degree topological expanders?
\end{problem}
\begin{problem}\label{prob:coboundary_expanders_F2}
    For $d\geq 2$, are there families of $d$-dimensional bounded degree $\FF_2$-coboundary expanders?
\end{problem}

Problem \ref{prob:top_expanders} was resolved in full --- first  for $d=2$ in \cite{kaufman2016isoperimetric}, and later for all dimensions in \cite{evra2016bounded} --- \textbf{without} resolving Problem \ref{prob:coboundary_expanders_F2}. This was done due to the following  generalization of Gromov's implication from coboundary expansion to topological overlapping:
\begin{thm}
[\cite{DKW_cosystolic_exp_implies_overlap}]\label{thm:intro_DKW}
    Fix $d\geq 2$, and $\frak{X}=\{\cX\}$ a family of (finite, locally sparse) $d$-dimensional simplicial complexes. Assume  both the cocycle Cheeger constants
        $h_i(\cX,\FF_2)$
    and cosystoles
        $CoSyst_i(\cX,\FF_2)$ of all $\cX \in \frak{X}$ in all dimensions are uniformly bounded away from zero.
        Then the family $\frak{X}$ satisfies the topological overlapping property.
\end{thm}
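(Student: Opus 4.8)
The plan is to run Gromov's argument from expansion to topological overlapping, in the form refined in \cite{DKW_cosystolic_exp_implies_overlap}, and to verify that at every place where Gromov invokes $\FF_2$-coboundary expansion, the weaker pair of hypotheses --- cocycle expansion together with a cosystole lower bound --- already suffices. Write $\eps>0$ for a common lower bound on all the $h_i(\cX,\FF_2)$ and $CoSyst_i(\cX,\FF_2)$, over $\cX\in\frak X$ and $0\le i\le d-1$, and fix on each $\cX$ the standard compatible system of measures $\mu_i$ on $\overrightarrow\cX(i)$ induced from $\mu_d$. First I would reduce to affine maps: it suffices to prove that for every $\cX\in\frak X$ and every $f$ that is affine on each simplex of $\cX$ and in general position there is a point of $\RR^d$ covered by a $\mu_d$-fraction of the $d$-cells bounded below uniformly. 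The passage from arbitrary continuous maps to this case is standard --- it uses subdivision together with a subdivision-stable strengthening of the conclusion, and this is where the local sparseness of $\frak X$ enters --- so I would not reproduce it.

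Fix such an $f$ and a generic $p=(t_1,\dots,t_d)\in\RR^d$, and let $\RR^d=A_0\supset A_1\supset\dots\supset A_d=\{p\}$ be the complete affine flag through $p$, $A_j=\{x:x_1=t_1,\dots,x_j=t_j\}$. For $0\le k\le d$ define $c_k\in C^k(\cX,\FF_2)$ by $c_k(\tau)=1$ exactly when $f(\tau)\cap A_k\neq\emptyset$; for generic $f$ and $p$ the $k$-dimensional simplex $f(\tau)$ meets the codimension-$k$ subspace $A_k$ transversally in a single point, when it meets it at all. Then $c_0$ is the constant-$1$ cochain, so $\Vert c_0\Vert=1$, while $\Vert c_d\Vert=\Pro_{\sigma\sim\mu_d}[\,p\in f(\sigma)\,]$ is precisely the quantity to be bounded below. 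Since $f(\tau)\cap A_{k-1}$ is a (possibly empty) segment whose two endpoints, when it is nonempty, lie in the relative interiors of exactly those facets $\tau'$ of $\tau$ with $c_{k-1}(\tau')=1$, an elementary count shows that each $c_{k-1}$ is a cocycle and that
\[
c_k=\delta b_{k-1},\qquad b_{k-1}(\tau'):=c_{k-1}(\tau')\cdot\bigl[\,x_k\bigl(f(\tau')\cap A_{k-1}\bigr)<t_k\,\bigr]
\]
(the \emph{sub-level cochain}); moreover $b_{k-1}$ and $c_{k-1}+b_{k-1}$ have disjoint supports whose union is $\mathrm{supp}(c_{k-1})$, and as $t_k$ increases over $\RR$ the norm $\Vert b_{k-1}\Vert$ grows monotonically from $0$ to $\Vert c_{k-1}\Vert$.

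The heart of the matter is a one-step estimate: if $t_1,\dots,t_{k-1}$ have already been chosen with $\Vert c_{k-1}\Vert\ge\rho$, then some choice of $t_k$ gives $\Vert c_k\Vert\ge c(\eps,d)\cdot\rho$ with $c(\eps,d)>0$. Because $c_k=\delta b_{k-1}$, cocycle expansion yields $\Vert c_k\Vert\ge h_{k-1}(\cX,\FF_2)\cdot d\bigl(b_{k-1},Z^{k-1}(\cX,\FF_2)\bigr)\ge\eps\cdot d\bigl(b_{k-1},Z^{k-1}\bigr)$, so it is enough to find $t_k$ with $d(b_{k-1},Z^{k-1})\gtrsim\Vert c_{k-1}\Vert$. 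Taking $t_k$ a $\mu_{k-1}$-median of the sub-level function balances the supports, $\Vert b_{k-1}\Vert\approx\Vert c_{k-1}+b_{k-1}\Vert\approx\tfrac12\Vert c_{k-1}\Vert$, which --- using $0,c_{k-1}\in Z^{k-1}$ --- already bounds $d(b_{k-1},Z^{k-1})$ \emph{above} by about $\tfrac12\Vert c_{k-1}\Vert$; the real content is the matching lower bound, i.e.\ that a balanced $b_{k-1}$ cannot be anomalously close to some other cocycle. Here both hypotheses are used. If every balanced $t_k$ gave a $b_{k-1}$ within distance $\eta\Vert c_{k-1}\Vert$ of $Z^{k-1}$ for a small $\eta=\eta(\eps)$, then a \emph{tracking cocycle} $z(t_k)$ would follow the monotone family $\{b_{k-1}(t_k)\}$ from ${\bf 0}$ to $c_{k-1}$, changing only by jumps that --- being differences of cocycles each close to two consecutive members of the family, which differ by a single cell --- have norm below $CoSyst_{k-1}(\cX,\FF_2)\ge\eps$ and are therefore coboundaries; summing the jumps across a balanced window of thresholds and confronting this with the balancing yields a contradiction. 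Hence some balanced $t_k$ has $d(b_{k-1},Z^{k-1})\gtrsim\Vert c_{k-1}\Vert$. Iterating from $\Vert c_0\Vert=1$ through $k=1,\dots,d$ produces $\Vert c_d\Vert\ge c(\eps,d)^d>0$ at the point $p=(t_1,\dots,t_d)$ built from the chosen medians, which is the asserted uniform overlap.

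I expect the one-step estimate to be the main obstacle --- concretely, turning the informal tracking-cocycle picture into a proof that the sub-level cochains $b_{k-1}$ cannot, for all balanced thresholds simultaneously, hug a cocycle. This is a Cheeger-type statement for cosystolic expanders: cocycle expansion alone controls $\Vert\delta b_{k-1}\Vert$ only through the distance to $Z^{k-1}$, and a priori that distance can collapse near a nontrivial cohomology class; it is exactly the cosystole lower bound that forces any such class to be ``bulky'', which is incompatible with the family sweeping monotonically and by single cells from ${\bf 0}$ to $c_{k-1}$. Quantifying this against the single-cell steps, and controlling how the resulting errors accumulate over the $d$ iterations, is the delicate part; a secondary, more routine obstacle is the omitted reduction from arbitrary continuous maps to affine ones in general position.
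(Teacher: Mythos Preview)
The paper does not prove this theorem: it is quoted from \cite{DKW_cosystolic_exp_implies_overlap} as background, with no argument given. So there is no ``paper's own proof'' to compare your attempt against.

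That said, your outline is the Dotterrer--Kaufman--Wagner refinement of Gromov's argument, and the skeleton is set up correctly: the affine reduction, the flag $A_0\supset\cdots\supset A_d$, the intersection cochains $c_k$, the identity $c_k=\delta b_{k-1}$ for the sub-level cochain, and the reduction of the one-step estimate to a lower bound on $d(b_{k-1},Z^{k-1})$ via cocycle expansion --- all of this matches the published proof. Your identification of the crux is also accurate: the whole difficulty is the ``tracking cocycle'' step, namely showing that the monotone family $b_{k-1}(t)$ cannot stay uniformly close to $Z^{k-1}$ at every threshold, and that the cosystole bound is what rules this out.

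Where your sketch is genuinely incomplete is in closing that step. You write that ``summing the jumps across a balanced window of thresholds and confronting this with the balancing yields a contradiction'', but note that $c_{k-1}$ is itself a coboundary (indeed $c_{k-1}=\delta b_{k-2}$ from the previous stage, and $c_0$ is the augmentation coboundary), so merely showing that the tracking cocycles $z(t)$ stay in a single cohomology class --- hence that $z(+\infty)-z(-\infty)\in B^{k-1}$ --- does not by itself contradict anything. The actual DKW argument needs a more careful quantitative bookkeeping (essentially a cofilling/isoperimetric estimate comparing $\Vert c_{k-1}\Vert$ to the accumulated distances $d(b_{k-1}(t),Z^{k-1})$, using the cosystole bound to control how the nearest cocycle can drift). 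Your sketch gestures at this but does not carry it out; you correctly flag it as the main obstacle. The reduction from continuous to affine maps is also more delicate than ``subdivision'' --- it goes through the Gromov--DKW machinery of chain maps and cofilling under refinement --- but that is indeed secondary.
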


The approach of Kaufman--Kazhdan--Lubotzky in \cite{kaufman2016isoperimetric} and Evra--Kaufman in \cite{evra2016bounded} was to prove that the $d$-skeleton of $d+1$-dimensional \emph{Ramanujan Complexes} (see \cite{LSV2005ramanujan}) satisfy the conditions of Theorem \ref{thm:intro_DKW}. Let us explain this approach with more details (and in slightly greater generality): Let $\cB$ be the  Bruhat--Tits building of a  $p$-adic simple Lie group $G$ of rank $d+1$. Let $\Gamma$ be a cocompact lattice of $G$. On the one hand, $\cX={} _{\Gamma}\setminus^\cB$ can have a vanishing coboundary Cheeger constant (see \cite{lubotzky1987finite}), and thus a family of such does not resolve Problem \ref{prob:coboundary_expanders_F2}. On the other hand, Evra--Kaufman proved the following:
\begin{thm} [\cite{evra2016bounded}]\label{thm:intro_EK}
    Fix $d\in \mathbb{N}$ and $G$ a  $p$-adic simple Lie group of rank $d+1$. If $p$ is sufficiently large, and $\{\Gamma\}$  is the family of cocompact lattices of $G$, then the $d$-skeletons of $\frak{X}=\left\{{}_{\Gamma}\setminus ^\cB\right\}$ have cocycle Cheeger constants and cosystoles bounded away from zero (in all dimensions in a uniform manner), thus satisfying the conditions of Theorem \ref{thm:intro_DKW}. Hence, it forms a family of topological expanders.
\end{thm}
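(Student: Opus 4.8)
The plan is to isolate a \emph{local-to-global} (``localization'') principle, verify its hypotheses for the Bruhat--Tits quotients with constants that depend only on $d$ and $G$, and then quote Theorem~\ref{thm:intro_DKW} for the last sentence.

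\textbf{Step 1 (the localization theorem).} First I would establish, generalizing the $d=2$ argument of \cite{kaufman2016isoperimetric}, the following statement: there exist $\lambda_0=\lambda_0(d)>0$ and, for every $\delta>0$, constants $\eps(d,\delta),\mu(d,\delta)>0$ such that any $d$-dimensional complex $\cX$ satisfying (i) the $1$-skeleton of the link of every face of dimension $\le d-2$, as well as that of $\cX$ itself, is a $\lambda$-spectral expander for some $\lambda\le\lambda_0$, and (ii) the link of every face of dimension $\le d-2$ is a $\delta$-coboundary expander in every relevant dimension, has $h_i(\cX,\FF_2)\ge\eps(d,\delta)$ and $CoSyst_i(\cX,\FF_2)\ge\mu(d,\delta)$ for all $0\le i\le d-1$. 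The cocycle-expansion half is the softer part: a Garland-type descent through the links, fed by the spectral bound (i) and the coboundary expansion (ii), forces any cochain far from $Z^i(\cX,\FF_2)$ to have large coboundary. The hard part is the lower bound on $CoSyst_i$, i.e.\ showing a cocycle $\alpha\in Z^i(\cX,\FF_2)$ of sufficiently small norm is already a coboundary. I would localize $\alpha$ at a typical vertex $v$: smallness of $\Vert\alpha\Vert$ forces the restriction of $\alpha$ to the link $\cX_v$ to be a small cocycle there, hence --- by induction on $d$, using (ii) for $\cX_v$ --- a small coboundary $\delta\gamma_v$ in $\cX_v$; one then ``cones off'' these local fillings and bounds the defect of the resulting global cochain $\beta$ (with $\delta\beta=\alpha$) by controlling, through the spectral gap, how the cones at neighbouring vertices disagree. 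Making the constants $\eps,\mu$ in this filling/propagation argument depend on $d$ and $\delta$ only --- not on $\cX$ --- is the main obstacle.

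\textbf{Step 2 (links are coboundary expanders).} Let $\cB$ be the Bruhat--Tits building of $G$, of dimension $d+1$, so each $\cX\in\frak{X}$ is the $d$-skeleton of ${}_{\Gamma}\backslash^{\cB}$ for a cocompact lattice $\Gamma$; after replacing $\Gamma$ by a torsion-free finite-index subgroup we may assume the links of ${}_{\Gamma}\backslash^{\cB}$ coincide with those of $\cB$. The link of a face of dimension $k$ in $\cB$ is a spherical building of rank $d+1-k$ over the residue field $\FF_q$ ($q$ a power of $p$), and passing to the $d$-skeleton only truncates the top dimension of these links. Hence for every face $\sigma$ of dimension $\le d-2$ the link $\cX_\sigma$ is (a lower skeleton of) a spherical building of bounded rank, and by Theorem~\ref{thm:intro_gromov_complete_and_buildings} (see also \cite{LMM_2016expansion_building_like}) such buildings are $\delta$-coboundary expanders with $\delta=\delta(d)>0$ depending only on the Dynkin type, hence uniformly over the whole family. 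This gives hypothesis (ii).

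\textbf{Step 3 (local spectral expansion) and conclusion.} Hypothesis (i) is where the Ramanujan property enters: ${}_{\Gamma}\backslash^{\cB}$ is a Ramanujan complex in the sense of \cite{LSV2005ramanujan}, so its coloured adjacency operators obey the Ramanujan bound and the $1$-skeleton of $\cX$ is a $\lambda$-spectral expander with $\lambda=O(q^{-1/2})$; the same holds for the $1$-skeleton of every link, each being (a skeleton of) a spherical building over $\FF_q$ with spectral gap $O(q^{-1/2})$. Choosing $p$, hence $q$, large enough that $O(q^{-1/2})\le\lambda_0(d)$ makes (i) hold uniformly. Since each $\cX$ is a quotient of the fixed complex $\cB$, the family $\frak{X}$ has degree bounded in terms of $G$ and $p$ alone, so it is a bounded-degree family. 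Step~1 now applies to every $\cX\in\frak{X}$ with the same constants, yielding uniform positive lower bounds on $h_i(\cX,\FF_2)$ and $CoSyst_i(\cX,\FF_2)$ in all dimensions, and Theorem~\ref{thm:intro_DKW} then gives the topological overlapping property.
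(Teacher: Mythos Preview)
The paper does not itself prove Theorem~\ref{thm:intro_EK}; it is quoted from \cite{evra2016bounded} as background in the introduction, so there is no ``paper's own proof'' to compare against. Your sketch is a faithful high-level outline of the Evra--Kaufman strategy: a local-to-global machine (Step~1) fed by coboundary expansion of the links, which are spherical buildings (Step~2), and by local spectral expansion (Step~3).

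One genuine correction to Step~3: you invoke the Ramanujan property of ${}_{\Gamma}\backslash^{\cB}$ to obtain the global spectral gap, but the theorem is stated for \emph{all} cocompact lattices $\Gamma$ of $G$, and only very special arithmetic lattices produce Ramanujan complexes in the sense of \cite{LSV2005ramanujan}. The correct argument --- and the one the present paper itself deploys in Section~\ref{sec:local_global_exp} --- is that the links of ${}_{\Gamma}\backslash^{\cB}$ are determined locally by $\cB$ and hence are spherical buildings over $\FF_q$ with second eigenvalue $O(q^{-1/2})$ \emph{regardless of} $\Gamma$; Oppenheim's trickling-down theorem (Theorem~\ref{thm:trickle}) then propagates this local spectral gap to the $1$-skeleton of $\cX$ itself. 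With this fix, your hypothesis~(i) holds for every cocompact $\Gamma$ once $p$ (hence $q$) is large, and the remainder of your sketch stands.
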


\subsubsection*{\textbf{The permutation coefficients setup and our contributions}}
By swapping  in  \eqref{eq:intro_informal_coboundary_expansion_F2}, \eqref{eq:intro_informal_cocycle_expansion_F2} and \eqref{eq:intro_informal_large_cosystols_F2} the coefficients $\FF_2$ with $\Sym=\bigsqcup\Sym(n)$ --- equipped with the normalized Hamming distance with errors (see \eqref{eq:permutation_normalized_Hamming_distance}) --- we get the notions of coboundary expansion,  cocycle expansion and  large cosystoles\footnote{Actually, the definition of $CoSyst_1(\cX,\Sym)$ requires some special care, and is not just plugging $\Sym$ instead of $\FF_2$ in the definition. See \eqref{eq:cosyst_sym_coeff} for the formal definition.} \textbf{in permutations}. Since permutation groups are non-abelian, we define these notions only in dimensions $0$ and $1$. In the first part of this paper \cite{CL_part1}, we related the Cheeger constants with permutations to well known problems in groups theory  and  combinatorial topology, i.e., pointwise group stability and rigidity of covering spaces. 
The goal of this paper is to mimic the developments made in the $\FF_2$ setup  for permutation coefficients.
In Section \ref{sec:Examples}, we follow Gromov and prove an analogue of Theorem  \ref{thm:intro_gromov_complete_and_buildings}:
\begin{thm}
    Complete complexes and spherical buildings are coboundary expanders with permutation coefficients.
\end{thm}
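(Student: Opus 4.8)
The plan is to mimic the two classical $\FF_2$ arguments — Gromov's / Linial--Meshulam's cone argument for complete complexes \cite{gromov2010singularities,linial_meshulam2006homological}, and the apartment-by-apartment filling of \cite{LMM_2016expansion_building_like} for spherical buildings — and to observe that both are, at heart, ``bi-invariant (pseudo)norm'' arguments: wherever the $\FF_2$ proof uses the additive structure on $\FF_2$-cochains and the subadditivity of the $\FF_2$-norm, one instead uses that the normalized Hamming distance on each $\Sym(n)$ is bi-invariant (hence conjugation-invariant), so that $d(g_1g_2\cdots g_k,\Id)\le\sum_i d(g_i,\Id)$ \emph{even after conjugating each $g_i$ by an arbitrary element}. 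Since permutation coefficients are treated only in dimensions $0$ and $1$ (as defined in \cite{CL_part1}), it suffices to bound $h^B_0(\cdot,\Sym)$ and $h^B_1(\cdot,\Sym)$ from below by a constant that is uniform over the relevant family; and, as the footnote above records, this genuinely needs a proof, since passing from $\FF_2$ to $\Sym$ can only decrease the Cheeger constant.

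For a complete $d$-complex $\cX$ on $m$ vertices (with the uniform $\mu_i$), I would take any $\alpha\in C^1(\cX,\Sym(n))$ and, for each vertex $v_0$, define $f_{v_0}\in C^0(\cX,\Sym(n))$ by $f_{v_0}(v):=\alpha(v,v_0)$ for $v\ne v_0$ and $f_{v_0}(v_0):=\Id$ — i.e.\ parallel transport of $\alpha$ from $v_0$ along the unique edge. A one-line computation in the triangle $\{v_0,u,v\}$ shows that $\alpha(uv)$ and $(\delta f_{v_0})(uv)$ differ, on the right, by $(\delta\alpha)(v_0,u,v)$, so by right-invariance $d\bigl(\alpha(uv),(\delta f_{v_0})(uv)\bigr)=d\bigl((\delta\alpha)(v_0,u,v),\Id\bigr)$, while this distance is $0$ on the edges through $v_0$. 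Conjugation-invariance makes $d((\delta\alpha)(\sigma),\Id)$ independent of the ordering of the triangle $\sigma$, and a uniformly random (edge, disjoint vertex) pair is the same object as a uniformly random (triangle, marked vertex) pair; averaging over $v_0$ therefore gives $\min_{v_0}d(\alpha,\delta f_{v_0})\le\frac{m-2}{m}\Vert\delta\alpha\Vert$, hence $d(\alpha,B^1(\cX,\Sym))\le\frac{m-2}{m}\Vert\delta\alpha\Vert$ and $h^B_1(\cX,\Sym)\ge\frac{m}{m-2}\ge 1$. The bound $h^B_0(\cX,\Sym)\ge 1$ is the same computation one dimension lower: for any $0$-cochain $\alpha$ and any vertex $u$ the triangle inequality gives $d(\alpha,B^0(\cX,\Sym))\le\Ex_v d(\alpha(v),\alpha(u))$, and averaging over $u$ across the edges of $K_m$ gives $\Vert\delta\alpha\Vert$.

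For a spherical building $\cB$ there is no global cone point, so instead I would fix a chamber $c_0$ and define $f\in C^0(\cB,\Sym(n))$ by transporting $\alpha$ from $c_0$ along gallery-geodesics. Given an edge $e=\{u,v\}$, choose an apartment $\Sigma$ containing $e$ and $c_0$ together with geodesics inside it; the loop made of $e$ and the two geodesics to $c_0$ bounds a disk inside $\Sigma$, and the holonomy of $\alpha$ around that loop — which is exactly the right multiplier relating $\alpha(e)$ and $(\delta f)(e)$ — equals a product of \emph{conjugates} of the $(\delta\alpha)$-values on the $2$-cells of that disk. The number of these $2$-cells is at most the number of $2$-cells of the Coxeter complex $\Sigma$, a bound depending only on the type of $\cB$ and not on the underlying field; this $q$-independent filling estimate is precisely what \cite{LMM_2016expansion_building_like} establishes for $\FF_2$, and bi-invariance of the Hamming metric upgrades it verbatim to $\Sym$. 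Averaging over $c_0$ and over the (apartment, geodesic) choices turns the sum over a disk into a constant multiple of $\Vert\delta\alpha\Vert$, exactly as in the complete-complex case, giving $h^B_1(\cB,\Sym)\ge c>0$ with $c$ uniform over all spherical buildings of a fixed type. For $h^B_0$ I would use that the $1$-skeletons of spherical buildings of a fixed type are expander graphs uniformly in $q$, together with the fact that the normalized Hamming metric on $\Sym(n)$ embeds isometrically (up to scaling) into $L_1$, so the $L_1$-Poincaré inequality for expanders applies.

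The elementary part is the complete-complex case; the main obstacle is the non-abelian bookkeeping in the building case. One must choose the gallery-geodesics and the filling disks coherently enough that the averaging is legitimate, track the conjugating elements so that bi-invariance really does deliver an honest subadditive bound, and verify that the resulting constant depends only on the Coxeter type — not on $q$ and not on the degree $n$ of the permutations. In short, the crux is checking that the $\FF_2$ proof of \cite{LMM_2016expansion_building_like} is a genuinely ``metric'' proof in disguise; I also expect to have to handle separately the degenerate low-rank cases, where the apartments are at most $1$-dimensional and the building contributes only the $h^B_0$-statement.
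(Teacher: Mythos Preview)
Your proposal is correct and follows essentially the same strategy as the paper --- the cones method --- in both cases. For the complete complex your $f_{v_0}$ is exactly the paper's $\beta_x$ (Lemma~\ref{lem:identity_on_spanning_tree} applied to the star at $x$), and your bound $h_1^B\ge \frac{m}{m-2}$ matches the paper's $\frac{d+1}{d-1}$; for buildings the paper writes out only the $A_2$ case, replacing your apartment-filling bound by the concrete Fact~\ref{fact:properties_of_building} (diameter~$3$, every loop of length~$\le 7$ filled by~$\le 25$ triangles) and performing the averaging via transitivity of $\mathrm{Aut}(\cB)$ on triangles rather than over random cone points and apartments, but Remark~\ref{rem:dist_on_edges_does_not_matter} and the remark following it explicitly identify this with the LMM cones argument you describe. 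Two small points: the paper does not separately treat $h_0^B$ (it takes ``coboundary expander'' to mean $h_1^B>0$, the $0$-dimensional statement being immediate for connected complexes), so your expander/$L_1$-Poincar\'e digression is extra; and your ``one-line computation'' has a harmless orientation slip --- with $f_{v_0}(v)=\alpha(v,v_0)$ one gets that $d(\alpha(uv),\delta f_{v_0}(uv))$ equals $\|\delta\alpha\|$ on the triangle $\{v_0,u,v\}$ in \emph{some} orientation, which is all that is needed.
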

We further show in this section that every (finitely presented) group $\Gamma$ has a presentation with an arbitrarily small cocycle Cheeger constant in permutations. 
In Section \ref{sec:large_cosystols}, we study cosystoles with permutation coefficients. We specifically show the following: 
\begin{thm}\label{thm:intro_exp_of_coverings_implies_cosystols}
    Let $\cX$ be a finite polygonal complex.\footnote{A polygonal complex is a combinatorial version of a finite $2$-dimensional CW complex. See Section \ref{sec:prelim} for a formal definition.} 
    \begin{enumerate}
    \item  If the fundamental group $\pi_1(\cX,*)$ is residually finite and amenable, then for every $\eps>0$ there is a finite covering $\cY$ of $\cX$ with $CoSyst_1(\cY,\Sym)<\eps$.
        \item If the fundamental group $\pi_1(\cX,*)$ has property $(\tau)$\footnote{See Section \ref{sec:prop_tau_and_edge_exp_of_coverings} or \cite{lubotzky1994discrete} for a  definition.} ---  in particular, if it has Kazhdan property (T) --- then, for every finite connected covering $\cY$ of $\cX$ we have
            $CoSyst_1(\cY,\Sym)\geq \kappa,$
        where $\kappa>0$ depends only on $\cX$.  
        \item By applying Garland's method, we can (again) deduce  that for every finite connected covering $\cY$ of $\cX$ we have $CoSyst_1(\cY,\Sym)\geq \kappa$,
        where $\kappa>0$ depends only on the local spectral expansion\footnote{See Section \ref{sec:local_global_exp} or \cite{Dikstein_lecture_notes_trickling_down} for a formal definition.} of $\cX$.  
    \end{enumerate}
\end{thm}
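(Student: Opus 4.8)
Throughout write $G=\pi_1(\cX,*)$ and $\Sigma$ for the generating set of $G$ read off a spanning tree of $\cX^{(1)}$. The plan is to translate everything, via the dictionary of Part~I, into the language of group actions: a $1$-cocycle on a covering $\cY\to\cX$ with coefficients in $\Sym(n)$ is the same datum as an action $\rho\colon\pi_1(\cY,*)\to\Sym(n)$; it is a coboundary exactly when $\rho$ is trivial; and $d(\mathbf 0,[\alpha])$ --- the quantity whose infimum over nontrivial classes is $CoSyst_1(\cY,\Sym)$ --- equals, up to $\cX$-dependent constants coming from the weighting $\mu_1^{\cY}$ and the care needed in the definition of $CoSyst_1(\cY,\Sym)$, the least $\mu_1^{\cY}$-measure of an edge set $F\subseteq E(\cY)$ over the complement of which the degree-$n$ covering $\cZ\to\cY$ attached to $\rho$ splits into sheets. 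Decomposing $\rho$ into orbits $O_1,\dots,O_k$ and correspondingly $\cZ=\bigsqcup_j\cZ_j$, each $\cZ_j\to\cY$ is the connected $O_j$-covering given by a point stabiliser $H_j\le\pi_1(\cY)\le G$, and $F$ must in particular split every $\cZ_j$; moreover $\cZ_j^{(1)}$ is obtained from the Schreier graph $\Cay(H_j\backslash G,\Sigma)$ by blowing each vertex up into the fixed finite graph $\cX^{(1)}$. In this language the three claims become: (1)~some finite-index $H\le G$ carries a nontrivial action that can be untwisted by deleting only an $\eps$-fraction of edges; (2)~property~$(\tau)$ forbids this; (3)~Garland's localisation furnishes the same lower bound directly from the links.

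For (1) --- the case of finite $G$ being vacuous, so assume $G$ infinite --- use residual finiteness to fix a descending chain $N_1\trianglerighteq N_2\trianglerighteq\cdots$ of finite-index normal subgroups with $\bigcap_iN_i=1$, and let $\cY_i\to\cX$ be the associated normal covers. Then $\cY_i$ Benjamini--Schramm converges to the universal cover $\widetilde\cX$, and since $G$ is amenable the $\cY_i$ have isoperimetric constants tending to $0$, so they contain vertex sets $W$ with $\mu_1^{\cY_i}(\partial W)$ as small as we like; one also checks (using residual finiteness, passing to $\Sym(m)$-coefficients if the finite abelianizations vanish) that $H^1(\cY_i,\Sym)\ne 0$ for $i$ large. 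The naive candidate cocycle $\mathbf 1_{\partial W}$ is, however, a coboundary ($=\delta\mathbf 1_W$) and so represents the trivial class; the real work in (1) is to produce inside some $\cY_i$ a \emph{non}-coboundary cocycle of comparably small norm. I would do this by realising, inside a large enough $\cY_i$, a F{\o}lner-efficient region whose edge boundary breaks up into several pieces lying pairwise far apart (so that no $2$-cell meets two of them) --- whence each single piece is separately a $1$-cocycle --- and arranging that deleting one such piece leaves $\cY_i$ connected, so that this piece is not a coboundary; its norm is then at most $\mu_1^{\cY_i}(\partial W)<\eps$, and letting $i\to\infty$ gives $\inf_\cY CoSyst_1(\cY,\Sym)=0$. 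The obstacle is precisely this last ``leaves $\cY_i$ connected'' point: the boundary of any region is a coboundary, so the construction must genuinely exploit the way a finite cover of an amenable group can ``route around'' a boundary piece --- immediate when $G$ surjects onto $\ZZ$, but needing care in general (e.g.\ via the structure of F{\o}lner tilings of $G$ together with the residually finite approximation). Alternatively one may phrase the whole of (1), through the Part~I equivalence, as a statement about amenable groups admitting nontrivial finite actions arbitrarily close, in the relevant weighted sense, to the trivial one, and argue there.

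For (2), property~$(\tau)$ for $G$ with respect to its finite-index subgroups (which passes to every finite-index $H\le G$) is equivalent to a uniform lower bound $h>0$ on the edge-Cheeger constants of all Schreier graphs $\Cay(K\backslash G,\Sigma)$ with $K$ of finite index; since $\cX^{(1)}$ is a fixed finite connected graph, blowing up vertices by it changes Cheeger constants only by $\cX$-dependent factors, so every $\cZ_j^{(1)}$ above has edge-Cheeger constant $\ge h'=h'(\cX,h)>0$. Given a finite connected cover $\cY$ and $\alpha\in Z^1(\cY,\Sym(n))\setminus B^1(\cY,\Sym(n))$, the action $\rho$ is nontrivial, so some orbit has $|O_j|\ge2$; for the (near-)optimal gauge $g$ realising $d(\mathbf 0,[\alpha])$, the edge set $F=\{e:\alpha^g(e)\ne\Id\}$ splits $\cZ_j$ over $\cY\setminus F$, so its preimage in $\cZ_j$ --- of the same $\mu_1$-measure, the normalised pullback being measure-preserving --- disconnects $\cZ_j$ into sheets all of measure $\le\tfrac12$; grouping sheets into a set $U$ with $\tfrac14\le\mu(U)\le\tfrac34$ and applying the Cheeger bound in $\cZ_j^{(1)}$ gives $\mu_1^{\cY}(F)\ge h'/4$. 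Hence $\|\alpha\|\ge d(\mathbf 0,[\alpha])\ge\kappa:=\min(\tfrac12,h'/4)>0$ (again absorbing $\cX$-dependent constants), so $CoSyst_1(\cY,\Sym)\ge\kappa$ for every finite connected cover $\cY$, with $\kappa=\kappa(\cX)$. The only substantive point is packaging ``a nontrivial permutation cocycle contains a nontrivial Schreier-graph cut'' cleanly and checking that restricting to one orbit loses nothing.

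For (3), the key is that a covering map $\cY\to\cX$ is a local isomorphism, so every vertex- and edge-link of $\cY$ is a link of $\cX$; thus local spectral expansion of $\cX$ is inherited by \emph{every} finite cover $\cY$, with the same parameters. One then runs Garland's method --- localising a cochain to the links and trickling the spectral gap down the cell poset --- directly on $\cY$, in the non-abelian form obtained by composing $\Sym(n)$-valued cochains with the permutation representation $\Sym(n)\curvearrowright\CC^n$ (so Garland's averaging inequalities apply to the associated $\CC^n$-cochains and transfer, with controlled and $n$-independent loss, back to the Hamming norm). This yields $CoSyst_1(\cY,\Sym)\ge\kappa$ with $\kappa$ depending only on the $1$- and $2$-dimensional spectral gaps in the links of $\cX$; alternatively, strong enough local spectral expansion already forces $G$ to have Kazhdan's property~(T), hence $(\tau)$, and one invokes~(2). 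Of the three parts I expect (1) to be the genuine obstacle: (2) is a clean Cheeger-type estimate and (3) is a transcription of Garland's method, whereas in (1) the cheap cocycle handed to us by amenability is a coboundary, and upgrading it to a cohomologically nontrivial one of the same small norm is what requires real work.
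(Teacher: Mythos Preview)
Your dictionary at the outset contains a serious error that propagates through both (1) and (2). You assert that $d(\mathbf 0,[\alpha])$ equals, up to $\cX$-dependent constants, the least $\mu_1^{\cY}$-measure of an edge set $F$ over whose complement the cover $\cZ\to\cY$ splits into sheets. This is false: one direction gives only $d(\mathbf 0,[\alpha])=\inf_g\|\alpha^g\|\le\min_F\mu_1(F)$, and the reverse inequality fails by a factor of $n$, since a nontrivial $\sigma\in\Sym(n)$ can have $\|\sigma\|_h=2/n$. Consequently your lower bound $\mu_1(F)\ge h'/4$ in (2) says nothing about $\|\alpha^g\|$. The paper avoids this by working directly with the Hamming norm via Fact~\ref{fact:levels_of_covering}: $\|\alpha\|$ is exactly the probability that a uniformly sampled edge of $\cZ$ leaves its own level. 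Since the cover $\cZ$ (not merely the Schreier graph) is itself a connected cover of $\cX$ and hence a $\gamma$-edge expander, each level has measure $1/n$ and boundary $\ge\gamma/n$; averaging over $i$ gives $\|\alpha\|\ge\gamma/2$ immediately. Your ``group sheets into $U$'' step throws away precisely the averaging that converts the discrete support into the Hamming norm.

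For (1), the paper's mechanism is entirely different from yours and circumvents the obstacle you correctly identify. You try to carve a non-coboundary piece out of $\partial W$ for a F{\o}lner set $W\subseteq\cY_i$, and you are right that this is delicate. The paper instead invokes Weiss's theorem (Theorem~\ref{thm:Weiss}): in a residually finite amenable group one can find a finite-index normal $N\trianglelefteq\Gamma$ together with a transversal $A$ of $\Gamma/N$ which is simultaneously an $\eps$-F{\o}lner set. Taking $\cY$ to be the $N$-cover and any proper finite-index $N'\lneq N$ to produce a further cover $\cZ\to\cY$, the levels of $\cZ$ over $\cY$ are the right translates $Ab$ of the F{\o}lner transversal; their $\eps$-invariance, read through Fact~\ref{fact:levels_of_covering}, gives $\|\alpha\|\le\eps$ directly. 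The cocycle is non-coboundary because $N'\lneq N$ and connected because $\cZ$ is. No boundary-splitting argument is needed.

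For (3), your ``alternatively'' route is essentially the paper's: links are preserved by covers, so $\lambda$-local spectral expansion passes to every $\cY$; Oppenheim's trickling down (Theorem~\ref{thm:trickle}) then gives global spectral expansion of $\cY$, and the weighted Cheeger inequality plus the corrected (2) finishes. The paper does not run a ``non-abelian Garland'' argument on $\Sym(n)$-valued cochains; that would be a genuinely different (and interesting) route, but your sketch of it is too vague to assess.
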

 Note that the local spectral expansion of a simplicial complex $\cX$ which is covered by a Bruhat--Tits building $\cB$ of a $p$-adic Lie group $G$ depends only on $\cB$. Hence, the bound guaranteed by clause $(3)$ of Theorem \ref{thm:intro_exp_of_coverings_implies_cosystols} is valid uniformly for all the finite quotients of $\cB$, and not merely to the covers of a specific quotient $\cX$.  
 
 Having large cosystoles with permutation coefficients implies large cosystoles with $\FF_2$ coefficients. Clauses $(2)$ and $(3)$ of Theorem \ref{thm:intro_exp_of_coverings_implies_cosystols} are new even for $\FF_2$ coefficients, and  provide a new (and simple) method for proving that the complexes used in \cite{kaufman2016isoperimetric} and \cite{evra2016bounded} have large cosystoles (in dimension $1$). 

The above results on cosystoles of polygonal complexes have an interesting application to systoles of manifolds. 
\begin{thm}\label{thm:intro_Riemannian}
    Let $(M_0,g_0)$ be a closed Riemannian manifold of dimension $n$, and assume $\pi_1(M_0,*)$ has property $(\tau)$. Then, there exists a constant $c>0$ depending only on $(M_0,g_0)$, such that $Sys_{n-1}(M,\FF_2)\geq c\cdot \vol (M)$ for every finite sheeted covering $M$ of $M_0$.
\end{thm}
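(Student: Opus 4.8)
Our plan is to derive the bound directly from property $(\tau)$, via a double covering and the Cheeger inequality, without triangulating $M_0$ at all; at the end we remark how it fits the cosystole picture. To begin, since $\pi_1(M_0,*)$ has property $(\tau)$, there is a constant $h_0>0$, depending only on $(M_0,g_0)$, such that the Cheeger (isoperimetric) constant satisfies $h(M')\geq h_0$ for \emph{every} finite connected Riemannian covering $M'\to M_0$: property $(\tau)$ furnishes a uniform positive lower bound for $\lambda_1(M')$ along all finite covers (Brooks' theorem and its $(\tau)$-refinement), and Buser's inequality --- which uses only a lower Ricci bound for $M_0$, inherited by all covers because a covering is a local isometry --- upgrades this to a uniform lower bound for $h(M')$. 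Set $c:=h_0/2$.

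Now let $M\to M_0$ be a finite covering, which we take to be connected (as is standard for coverings); we may assume $H_{n-1}(M,\FF_2)\neq 0$, as otherwise $Sys_{n-1}(M,\FF_2)=\infty$ and there is nothing to prove. Let $z$ be an $(n-1)$-cycle with $\FF_2$ coefficients and $[z]\neq 0$ in $H_{n-1}(M,\FF_2)$. By Poincar\'e duality over $\FF_2$, the class $[z]$ is dual to a nonzero element of $H^1(M,\FF_2)=\Hom(\pi_1(M),\FF_2)$, that is, to a surjection $\phi\colon\pi_1(M)\twoheadrightarrow\FF_2$; let $p\colon\widehat M\to M$ be the associated connected double covering, with deck involution $\sigma$. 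Then $\widehat M\to M_0$ is a finite connected covering, so $h(\widehat M)\geq h_0$, while $\vol(\widehat M)=2\vol(M)$ and $\vol_{n-1}(p^{-1}(z))=2\vol_{n-1}(z)$.

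Since $p^{*}\phi=0$, the cycle $p^{-1}(z)$ is null-homologous in $\widehat M$; say $p^{-1}(z)=\partial W$ for a finite-perimeter set (equivalently, mod-$2$ $n$-chain) $W\subseteq\widehat M$. As $\sigma$ fixes $\partial W=p^{-1}(z)$ setwise, the sets $\sigma W$ and $W$ have the same boundary, so their symmetric difference is a closed top-dimensional mod-$2$ chain and hence equals $\emptyset$ or all of $\widehat M$; the first case would give $W=p^{-1}(A)$ with $z=\partial A$ in $M$, contradicting $[z]\neq 0$, so $\sigma W=\widehat M\setminus W$ and therefore $\vol(W)=\tfrac12\vol(\widehat M)=\vol(M)$. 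The Cheeger inequality in $\widehat M$ then yields
\[ 2\,\vol_{n-1}(z)\ =\ \vol_{n-1}\bigl(p^{-1}(z)\bigr)\ \geq\ h(\widehat M)\cdot\vol(W)\ \geq\ h_0\cdot\vol(M), \]
so $\vol_{n-1}(z)\geq c\,\vol(M)$; taking the infimum over all such $z$ (and over the nonzero classes) gives $Sys_{n-1}(M,\FF_2)\geq c\,\vol(M)$.

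Morally this is the codimension-$1$ avatar of clause $(2)$ of Theorem \ref{thm:intro_exp_of_coverings_implies_cosystols}: a minimal non-bounding $(n-1)$-cycle is Poincar\'e dual to a non-coboundary $1$-cocycle on the $2$-skeleton $\cX$ of a triangulation of $M$, so one could instead invoke the $\FF_2$-shadow $CoSyst_1(\cX,\FF_2)\geq\kappa$ of clause $(2)$ together with a transversality-plus-monotonicity estimate bounding $\vol_{n-1}(z)$ below by a multiple of the number of edges of $\cX$ carrying that cocycle, a number comparable to $\vol(M)$; the route above simply bypasses the $2$-skeleton. In either version the only genuine content will be the \emph{uniformity, over the infinite family of coverings of $M_0$, of the relevant expansion constant} --- the Cheeger constant $h(\widehat M)$ here, or $\kappa$ there --- and in both cases this is exactly what property $(\tau)$ of $\pi_1(M_0)$ supplies, directly through Brooks/Buser or through clause $(2)$ of Theorem \ref{thm:intro_exp_of_coverings_implies_cosystols}; everything else is homological bookkeeping and standard isoperimetry.
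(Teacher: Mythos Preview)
Your proof is correct and takes a genuinely different route from the paper's. The paper triangulates $M_0$ by some $\cX_0$ (lifted to $\cX$ on $M$), invokes a Federer--Fleming deformation argument (following \cite{guth2014quantum}) to compare Lipschitz and simplicial systoles, then uses Poincar\'e duality via the dual cell complex to get $Sys_{n-1}(M,\FF_2)\geq c\cdot CoSyst_1(\cX,\FF_2)$, and finally plugs in clause~$(2)$ of Theorem~\ref{thm:intro_exp_of_coverings_implies_cosystols}. Your argument never leaves the Riemannian category: Brooks/Buser converts property~$(\tau)$ directly into a uniform Cheeger bound on all finite covers, and the double-cover trick (Poincar\'e-dualising $[z]$ to a class in $H^1(M,\FF_2)$, passing to the associated $\widehat M$, and observing that the deck involution forces $\vol(W)=\tfrac12\vol(\widehat M)$) reduces the systole bound to a single isoperimetric inequality.

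What each approach buys: yours is shorter and self-contained for this particular statement, sidestepping triangulations and the Federer--Fleming machinery entirely. The paper's route is the one that matches the paper's theme---it exhibits the theorem as a geometric shadow of the combinatorial $CoSyst_1(\cX,\FF_2)$ bound, which is itself a shadow of the $\Sym$-coefficient bound, and so illustrates why permutation-coefficient expansion is relevant to smooth geometry. Your closing paragraph already identifies this alternative correctly. One small technical remark: in your Cheeger step you are implicitly using that the perimeter of $W$ (the mass of $\partial W$ as a mod-$2$ current) is at most the formal Lipschitz volume $\vol_{n-1}(p^{-1}(z))=2\vol_{n-1}(z)$, since cancellations in $\FF_2$ can only decrease mass; the inequality goes the right way, so this is harmless.
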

We refer the reader to Section \ref{sec:Riemannian_man} for a detailed formulation of the theorem and the notions therein. Here we just mention that while quite a lot is known about $Sys_1(M,\FF_2)$ of a manifold, much less is known on the higher degree systoles. Theorem \ref{thm:intro_Riemannian}, which applies also to high rank locally symmetric spaces, seems to be the first of its kind. 

In Section \ref{sec:two_dim_coboundary_expanders_F2},  we provide a solution to Problem \ref{prob:coboundary_expanders_F2} for the $d=2$ case (it remains open for $d\geq 3$). Namely, we prove that:
\begin{thm}
    There exists an infinite family of bounded degree $2$-dimensional coboundary expanders with $\FF_2$ coefficients.
\end{thm}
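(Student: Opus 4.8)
The plan is to pin down, from the definitions in \eqref{eq:intro_informal_coboundary_expansion_F2}--\eqref{eq:intro_informal_large_cosystols_F2}, exactly what separates ``$\FF_2$-coboundary expander'' from ``$\FF_2$-cocycle expander'', and then to realize the missing property in the complexes of Kaufman--Kazhdan--Lubotzky and Evra--Kaufman by an arithmetic choice. The elementary observation is that for a connected $2$-dimensional complex $\cX$ one always has $h_0^B(\cX,\FF_2)=h_0(\cX,\FF_2)$, and if in addition $H^1(\cX,\FF_2)=0$, i.e.\ $Z^1(\cX,\FF_2)=B^1(\cX,\FF_2)$, then also $h_1^B(\cX,\FF_2)=h_1(\cX,\FF_2)$; conversely, if $H^1(\cX,\FF_2)\neq 0$, any $\alpha\in Z^1(\cX,\FF_2)\setminus B^1(\cX,\FF_2)$ has $\|\delta\alpha\|=0$ while $d(\alpha,B^1(\cX,\FF_2))>0$, so $h_1^B(\cX,\FF_2)=0$. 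Hence constructing bounded-degree $2$-dimensional $\FF_2$-coboundary expanders is equivalent to constructing bounded-degree connected $2$-dimensional simplicial complexes that are $\FF_2$-cocycle expanders uniformly in dimensions $0$ and $1$ \emph{and} have vanishing first $\FF_2$-cohomology.

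Next I would take these complexes to be quotients of a Bruhat--Tits building. Fix a non-archimedean local field $F$ whose residue characteristic is large enough for Theorem~\ref{thm:intro_EK} to apply in dimension $d=2$, and let $\cB$ be the associated $2$-dimensional Bruhat--Tits building (e.g.\ that of $\mathrm{PGL}_3(F)$), which is contractible and of bounded degree. For a cocompact lattice $\Gamma$ of the ambient $p$-adic group set $\cX_\Gamma=\Gamma\backslash\cB$, barycentrically subdivided if necessary so that it is a genuine simplicial complex --- this changes every Cheeger constant and the $1$-cosystole by at most a multiplicative constant depending only on $\cB$, and leaves cohomology unchanged. By Theorem~\ref{thm:intro_EK} all such $\cX_\Gamma$ are $\FF_2$-cocycle expanders and satisfy $CoSyst_i(\cX_\Gamma,\FF_2)\geq\kappa$ for $i=0,1$ with a uniform $\kappa>0$; their degree is bounded because they are covered by $\cB$. (As the introduction emphasizes, the lower bound on $CoSyst_1$ can alternatively be obtained from clause~$(3)$ of Theorem~\ref{thm:intro_exp_of_coverings_implies_cosystols}, since the local spectral expansion of $\cX_\Gamma$ is that of $\cB$ and is good when the residue characteristic is large; this is the ingredient that is new even with $\FF_2$ coefficients.)

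It remains to kill $H^1$. Taking $\Gamma$ torsion-free (so that $\cB$ is the universal cover of $\cX_\Gamma$ and $\pi_1(\cX_\Gamma)\cong\Gamma$), one has $H^1(\cX_\Gamma,\FF_2)\cong\Hom(\Gamma^{\mathrm{ab}},\FF_2)$, which vanishes precisely when $\Gamma^{\mathrm{ab}}$ is finite of odd order, equivalently when $\Gamma$ has no subgroup of index $2$. I would realize infinitely many such $\Gamma$ as torsion-free principal congruence subgroups of a fixed cocompact arithmetic lattice $\Gamma_0$ coming from a suitable degree-$3$ central division algebra (a Lubotzky--Samuels--Vishne-type construction). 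Here $\Gamma_0$ has Kazhdan's property~(T) and the congruence subgroup property, so $\Gamma_0^{\mathrm{ab}}$ and all $\Gamma(\mathfrak q)^{\mathrm{ab}}$ are finite and admit a description as products of local abelianizations: those at the split places are trivial, those at the finitely many ramified places can be made of odd order by the choice of the division algebra, and the extra factor appearing for $\Gamma(\mathfrak q)$ --- the one at $\mathfrak q$ --- is a finite $\ell$-group, where $\ell$ is the residue characteristic of $\mathfrak q$. Choosing $\mathfrak q$ of odd residue characteristic and of level large enough to be torsion-free thus gives $|\Gamma(\mathfrak q)^{\mathrm{ab}}|$ odd, hence $\Hom(\Gamma(\mathfrak q),\FF_2)=0$; letting $\mathfrak q$ run over an infinite set of such primes produces, via the reduction of the first paragraph, an infinite family $\{\cX_{\Gamma(\mathfrak q)}\}$ of bounded-degree $2$-dimensional $\FF_2$-coboundary expanders.

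The main obstacle is this last step: guaranteeing a genuinely infinite supply of cocompact lattices in the $p$-adic group with odd-order abelianization. Property~(T) by itself only yields finiteness of $\Gamma^{\mathrm{ab}}$, with no control of its $2$-part, so one really needs the congruence subgroup property together with the explicit, Lee--Szczarba-type description of abelianizations of congruence subgroups of $\mathrm{SL}_3$-type groups. A secondary, routine point to check is the stability of the Cheeger constants and of the $1$-cosystole under barycentric subdivision, used to turn the polygonal quotient $\Gamma\backslash\cB$ into a simplicial complex fitting the hypotheses of Theorem~\ref{thm:intro_EK}.
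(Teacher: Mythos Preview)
Your high-level strategy is exactly the paper's: reduce to finding bounded-degree $\FF_2$-cocycle expanders with vanishing first $\FF_2$-cohomology, invoke Evra--Kaufman for the former, and then arithmetically engineer lattices $\Gamma$ with $H^1(\Gamma,\FF_2)=0$. There is, however, a concrete mismatch in your choice of group. Theorem~\ref{thm:intro_EK} concerns the $d$-skeleton of quotients of the building of a group of rank $d+1$; accordingly, in Section~\ref{sec:two_dim_coboundary_expanders_F2} the paper quotes the Evra--Kaufman bound (Theorem~\ref{thm:cor_of_EK}) for $h_1({}_\Gamma\backslash^\cB,\FF_2)$ only when $G$ has rank $d\geq 3$, and then passes to the $2$-skeleton. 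Your example $\mathrm{PGL}_3(F)$ has rank $2$ and gives a $2$-dimensional building, for which the cited theorems do not provide the required uniform $h_1$-bound. The repair is to move to a rank-$3$ (or higher) group; the paper's choice is $\frak{G}=SU(n,D,h)$, a $\QQ$-form of $Sp(2n)$ with $n\geq 3$, built from a quaternion algebra $D$ ramified at $\infty$ and at an odd prime $p_0$.

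The arithmetic parts also differ, and the paper's is sharper precisely at the point you flag as the main obstacle. Rather than computing $\Gamma(\mathfrak q)^{\mathrm{ab}}$ Lee--Szczarba style, the paper observes (Fact~\ref{fact:vanishing_of_cohomology}) that $H^1(\Gamma,\FF_2)=0$ is equivalent to $\widehat\Gamma$ having no order-$2$ quotient, and by CSP together with strong approximation $\widehat\Gamma\cong K=\prod_\nu K_\nu$. The decisive local input is that, since $p_0\neq 2$, $\frak{G}$ splits at $2$ and one may take $K_2=Sp(2n,\ZZ_2)$, which is perfect for $n\geq 3$; Proposition~\ref{prop:open_compact_at_2_implies_vanishing} then disposes of the finitely many remaining bad places by passing to odd pro-$p$ subgroups, and manufactures infinitely many examples by shrinking along a pro-$\ell$ tower at a single odd place. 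Your idea of forcing odd abelianization via odd congruence level is in the same spirit but less direct, and note that an inner-form-of-$\mathrm{SL}_3$ construction from a degree-$3$ division algebra cannot be anisotropic at any archimedean place, so over a number field it will not produce cocompact $p$-adic lattices; you would need function fields (genuinely LSV) or outer forms, and in either case still raise the rank.
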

Though not using permutation coefficients, and thus deviating from the main theme of this paper, we provide here this construction  for two reasons: First, no such construction can be found in the literature. Second ---
\begin{thm}[Gohla--Thom \cite{Gohla_Thom}]
    If the family of complexes which we construct in Section \ref{sec:two_dim_coboundary_expanders_F2} are cocycle expanders \textbf{in permutations}, then there exist non-sofic gropus.\footnote{Actually, any cocycle stability rate in dimension $1$ with permutation coefficients for our construction will imply the existence of non-sofic groups. See Section \ref{sec:cocyc_exp_lattices_sofic}.}
\end{thm}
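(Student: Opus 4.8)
The plan is to combine the stability consequences of permutation‑coefficient cocycle expansion established in Part~I~\cite{CL_part1} with the general principle that a finitely generated group which is both sofic and permutation‑stable must be residually finite. Alongside the $\FF_2$‑coboundary‑expander family $\{\cX_n\}$ of Section~\ref{sec:two_dim_coboundary_expanders_F2}, the construction comes equipped with a finitely presented group $\Gamma$ (the fundamental group of the relevant complex) which is nontrivial and \emph{not} residually finite --- indeed, the natural way to arrange this is to take $\Gamma$ with no nontrivial finite quotients at all. Granting this, the argument is short: if the $\cX_n$ were cocycle expanders with $\Sym$ coefficients, then $\Gamma$ would be permutation‑stable, hence residually finite by the principle, contradicting the previous sentence; so $\Gamma$ cannot be sofic, and a non‑sofic group exists.

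To carry this out, I would first invoke the dictionary of Part~I: a uniform positive lower bound on the $1$‑cocycle Cheeger constants $h_1(\cX_n,\Sym)$ --- or, as remarked above, merely a uniform cocycle \emph{stability rate} in dimension $1$ for the family --- translates into permutation‑stability of $\Gamma$, namely that every map $\sigma_n\colon\Gamma\to\Sym(k_n)$ which is asymptotically multiplicative in the normalized Hamming metric lies, up to the flexibility built into the definition of stability, within $o(1)$ (controlled by the rate) of a genuine homomorphism. This is precisely the equivalence between almost‑cocycles on the complex and almost‑homomorphisms of its fundamental group from \cite{CL_part1}, applied with $\Sym$ coefficients. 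Next I would prove the general principle: assume $\Gamma$ is sofic and fix a sofic approximation $\sigma_n\colon\Gamma\to\Sym(k_n)$, i.e.\ a sequence that is asymptotically multiplicative, $d_{\mathrm{Ham}}(\sigma_n(gh),\sigma_n(g)\sigma_n(h))\to 0$ for all $g,h\in\Gamma$, and asymptotically faithful, $\liminf_n d_{\mathrm{Ham}}(\sigma_n(g),\mathrm{Id})>0$ for every $g\neq e$. Permutation‑stability supplies genuine homomorphisms $\phi_n\colon\Gamma\to\Sym(k'_n)$ with $d_{\mathrm{Ham}}(\sigma_n(g),\phi_n(g))\to 0$ for each $g$; hence for $g\neq e$ one gets $\phi_n(g)\neq\mathrm{Id}$ for all large $n$, so $g\notin\bigcap_n\ker\phi_n$. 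Since each $\ker\phi_n$ is a finite‑index normal subgroup, $\Gamma$ is residually finite, and combined with its non‑residual‑finiteness this contradicts soficity of $\Gamma$.

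The main obstacle is not this soft deduction but the input it relies on from Section~\ref{sec:two_dim_coboundary_expanders_F2}: the construction must simultaneously be bounded degree, be an $\FF_2$‑coboundary (hence cocycle) expander, and carry a fundamental group that is not residually finite and whose soficity is not already known. These demands are in tension. Indeed, the classical supply of non‑residually‑finite finitely presented groups --- central extensions of higher‑rank lattices of Deligne type --- consists of sofic groups (amenable extensions of linear groups), so by the principle above their presentation complexes \emph{cannot} be cocycle expanders in permutations; the construction must therefore be organized around a group genuinely outside the class of groups already known to be sofic. Establishing that the Section~\ref{sec:two_dim_coboundary_expanders_F2} complexes do encode such a group, and that the uniform expansion (or stability rate) of the family descends to permutation‑stability of that single group --- matching the combinatorial distributions on the $\cX_n$ with the word‑metric data and keeping every constant uniform --- is where the real work lies, and it is the substance of the argument carried out in Section~\ref{sec:cocyc_exp_lattices_sofic}.
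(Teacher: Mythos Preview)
Your argument rests on a misidentification of the relevant group. The complexes of Section~\ref{sec:two_dim_coboundary_expanders_F2} are of the form $\cX={}_{\Gamma_0}\backslash^\cB$ where $\Gamma_0$ is a cocompact arithmetic lattice in $Sp(2n,\QQ_p)$. These lattices are residually finite --- they even satisfy the congruence subgroup property. What the construction arranges is only that $H^1(\Gamma_0,\FF_2)=0$, i.e., that $\Gamma_0$ has no quotient of order $2$; this is far from having trivial profinite completion. So your scheme ``cocycle expansion $\Rightarrow$ $\pi_1(\cX)$ is permutation-stable $\Rightarrow$ if sofic then residually finite $\Rightarrow$ contradiction'' stalls at the last step: there is no contradiction, because $\Gamma_0$ \emph{is} residually finite.

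The paper's argument (see Theorem~\ref{thm:Gohla_Thom}) runs through a different group. Cocycle expansion of $\cX$ yields flexible permutation-stability of $\Gamma_0$ via Part~I. The non-sofic candidate is a Deligne-type \emph{central extension} $\tilde\Gamma_0$ of $\Gamma_0$ with finite center, which is not residually finite. The substance of Gohla--Thom~\cite{Gohla_Thom} is that flexible stability of the \emph{quotient} $\Gamma_0$ forces non-soficity of the \emph{extension} $\tilde\Gamma_0$; this is not an instance of the principle ``sofic $+$ stable $\Rightarrow$ residually finite'', which would require stability of $\tilde\Gamma_0$ itself. Note also that your claim that such Deligne extensions are already known to be sofic (as ``amenable extensions of linear groups'') is incorrect: extensions with amenable \emph{kernel} and sofic quotient are not known to be sofic in general --- the established direction is amenable \emph{quotient} with sofic kernel --- and indeed $\tilde\Gamma_0$ is only known to be weakly sofic~\cite{glebsky2022extensions}. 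It is precisely the candidate non-sofic group, not an obstacle to be designed around.
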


In fact, as discussed in Section \ref{sec:cocyc_exp_lattices_sofic}, proving cocycle expansion in permutations for one of many other arithmetic groups, should lead to a non-sofic group. See there for details.
Finding a non-sofic group is one of the most outstanding problems in group theory. The hope is that the methods already developed for the $\FF_2$ setup \cites{kaufman2016isoperimetric,evra2016bounded,dikstein2023coboundary} can be translated to the permutation coefficients setting, resolving  the search for non-sofic groups. In the rest of Section \ref{sec:open_problems} we discuss further open problems and suggest research directions related to this paper.  
\\

This paper is dedicated to Shmuel Weinberger, whose ability to connect disparate areas of mathematics is admired. We hope that the current paper is in his spirit.

\subsection{Acknowledgements}
Thanks to Yotam Dikstein, Shai Evra, Larry Guth, Nati Linial, Assaf Naor, Andrei Rapinchuk and Benjamin Weiss for useful discussions. 
Michael Chapman acknowledges with gratitude the Simons Society of Fellows and is supported by a grant from the Simons Foundation (N. 965535).
Alex Lubotzky is supported by the European Research Council (ERC)
under the European Union's Horizon 2020 (N. 882751), and by a research grant from the Center for New Scientists at the Weizmann Institute of Science.

\section{\textbf{Preliminaries}}\label{sec:prelim}

We briefly recall the main definitions and relevant results from part I \cite{CL_part1} needed for this paper.
A \emph{polygonal complex} $\cX$ is a $2$-dimensional CW complex whose $2$-cells  $\cX(2)=P(\cX)$ --- which we call \emph{polygons} --- are pasted along cyclically reduced paths of the \emph{underlying graph} ($1$-skeleton) $G(\cX)=(\cX(0)=V(\cX),\cX(1)=E(\cX))$. Every cyclically reduced path of length $\ell$ has $2\ell$ orientations, and we denote by $\overrightarrow \cX(2)=\raP(\cX)$ the collection of $2$-cells coming with a specific orientation of their pasted perimeter. We call $\raP(\cX)$ the \emph{oriented polygons}.  Given an oriented $i$-cell $x$, we denote by $[x]$ its un-oriented version.  
Let $\Gamma$ be a group and $d\colon \Gamma \times \Gamma \to \mathbb{R}_{\geq 0}$ a  bi-invariant metric on $\Gamma$.
  The \emph{$i$-cochains of $\cX$ with $\Gamma$  coefficients} are the anti-symmetric assignments of elements of $\Gamma$ to oriented $i$-cells. Namely,
    \[
        C^i(\cX,\Gamma)=\{\alpha\colon \overrightarrow{\cX}(i)\to \Gamma\mid \forall c\in \overrightarrow{\cX}(i)\colon \alpha(\bar c)=\alpha(c)^{-1}\}.
    \]
The \emph{coboundary maps} are defined as follows. For a $0$-cochain $\alpha\colon V(\cX)\to \Gamma$, its coboundary $\delta\alpha$ is the $1$-cochain 
\[
\forall x\xrightarrow{e} y\in \raE(\cX)\ \colon\ \ \delta\alpha(e)=\alpha(x)^{-1}\alpha(y).
\]
For a $1$-cochain $\alpha\colon \raE(\cX)\to \Gamma$, its coboundary $\delta\alpha$ is the $2$-cochain
\[
\forall \pi=e_1...e_\ell\in \raP(\cX)\ \colon\ \ \delta\alpha(\pi)=\alpha(e_1)...\alpha(e_\ell).\footnote{Every $1$-cochain can be extended to oriented paths in $\cX$. Thus, $\delta \alpha$ is just this extension of $\alpha$ evaluated only on the perimeters of polygons.}
\]
We assume from now on that any polygonal complex $\cX$ is given to us with probability distributions $\mu_i$  over its oriented $i$-cells $\overrightarrow\cX(i)$. We  assume $\mu_i$ is uniform over the orientations of a given $i$-cell, and thus can translate between distributions over the oriented and un-oriented cells freely.  The collection of measures $\mu_i$ is \emph{descending} if $\mu_i(\sigma)$ is proportional to $\sum_{\tau \supset \sigma} \mu_{i+1}(\tau)$.  For any two $i$-cochains with $\Gamma$ coefficients $\alpha$ and $\beta$,  the \emph{distance} between them is 
\begin{equation}\label{eq:distance_between_cochains}
    d(\alpha,\beta)=\Ex_{x\sim \mu_i}[d
(\alpha(x),\beta(x))].
\end{equation}
Given a subset $A\subseteq C^i(\cX,\Gamma)$ and a cochain $\alpha\in C^i(\cX,\Gamma)$, we define the distance of $\alpha$ from $A$ to be 
\begin{equation}
    d(\alpha,A)=\inf\{d(\alpha,\varphi)\mid \varphi\in A\}.
\end{equation}
The \emph{norm} of an $i$-cochain with $\Gamma$ coefficients $\alpha$ is its distance to the constant identity cochain, namely 
\begin{equation}\label{eq:norm_of_cochain}
    \lVert\alpha\rVert=\Ex_{x\sim \mu_i}[d
(\alpha(x),\Id)].
\end{equation}
An \emph{$i$-cocycle} is an $i$-cochain $\alpha$ for which $\delta\alpha$ is the constant identity function. We denote the collection of $i$-cocycles by $Z^i(\cX,\Gamma)$.
A $0$-cochain $\beta\colon V(\cX)\to \Gamma$ is said to be a \emph{$0$-coboundary} if it is constant. Namely, for every $x,y\in V(\cX)$ we have $\beta(x)=\beta(y)$. A $1$-cochain $\alpha\colon \raE(\cX)\to \Gamma$ is said to be a \emph{$1$-coboundary} if it is in the image of the coboundary operator $\delta\colon C^0(\cX,\Gamma)\to C^1(\cX,\Gamma)$. Namely, there exists a $0$-cochain $\beta\colon V(\cX)\to \Sym(n)$ such that for every $xy\in \raE(\cX)$, $\alpha(xy)=\delta\beta(xy)=\beta(x)^{-1}\beta(y)$. We denote by $B^i(\cX,\Sym)$ the collection of $i$-coboundaries of $\cX$.  Note that, without assuming further assumptions on $\Gamma$, the only indices for which we  define $Z^i(\cX,\Gamma)$ and $B^i(\cX,\Gamma)$ are $i=0\ \textrm{or}\ 1$.
 We say that the \emph{$i^{\rm th}$ cohomology of $\cX$ with $\Gamma$ coefficients vanishes} if
 every $i$-cocycle of $\cX$  is an $i$-coboundary. 
There is a natural action of $0$-cochains of $\cX$ with $\Gamma$ coefficients on the $1$-cochains:
\begin{equation}\label{eq:action_0-coch_on_1-coch}
    \forall \alpha \colon \raE(\cX)\to \Gamma ,\ \beta \colon V(\cX)\to \Gamma\ \colon\ \ \beta.\alpha(x\xrightarrow{e}y)=\beta(x)^{-1}\alpha(e)\beta(y).
\end{equation}
Note that for every polygon $\pi=x\xrightarrow{e_1}...\xrightarrow{e_\ell}x\in \raP(\cX)$, we have 
\begin{equation}\label{eq:conjugation_by_0_cochain_on_closed_path}
\beta.\alpha(\pi)=\beta(x)^{-1}\alpha(\pi)\beta(x).
\end{equation}
In particular, 
$\Vert\delta(\beta.\alpha)\Vert=\Vert \delta \alpha\Vert,$
and hence the action preserves $1$-cocycles. Furthermore, 
\[
d(\alpha,B^1(\cX,\Gamma))=\inf\{\Vert\beta.\alpha\Vert \mid \beta\in C^0(\cX,\Gamma)\}.
\]
 Let $\rho\colon \mathbb{R}_{\geq0}\to \mathbb{R}_{\geq0}$ be a \emph{rate function}, namely a decreasing function satisfying $\rho(\eps)\xrightarrow{\eps \to 0}0$.
    A polygonal complex is said to be  \emph{$\rho$-cocycle stable in the $i^{\rm th}$ dimension with $\Gamma$ coefficients} if for every $i$-cochain $\alpha$ we have
    \begin{equation} \label{eq:cocyc_stability}
           d(\alpha,Z^i(\cX,\Gamma))\leq \rho(\Vert\delta\alpha\Vert).
    \end{equation}
      The  $i^{\rm th}$ \emph{cocycle Cheeger constant} of a complex $\cX$ with $\Gamma$ coefficients is 
    \begin{equation}\label{eq:cocyc_Cheeger_constant}
    h_i(\cX,\Gamma)=\inf\left\{\frac{\Vert\delta\alpha\Vert}{d(\alpha,Z^i(\cX,\Gamma))}\ \middle\vert\ {\alpha \in C^i(\cX,\Gamma)},\ \Vert\delta\alpha\Vert\neq 0\right\}.
    \end{equation}
    Similarly, the  $i^{\rm th}$ \emph{coboundary Cheeger constant} of a complex $\cX$ with $\Gamma$ coefficients is 
\begin{equation}\label{eq:cobound_Cheeger_constant}
    h^B_i(\cX,\Gamma)=\inf\left\{\frac{\Vert\delta\alpha\Vert}{d(\alpha,B^i(\cX,\Gamma))}\ \middle\vert\ {\alpha \in C^i(\cX,\Gamma)} \setminus B^i(\cX,\Gamma)\right\}.
    \end{equation}
    When $h_i(\cX,\Sym)>0$ we call $\cX$ an \emph{$i$-cocycle expander with $\Gamma$ coefficients}, and when $h^B_i(\cX,\Sym)>0$ we call it an \emph{$i$-coboundary expander with $\Gamma$ coefficients}. Note that having a positive $i^{\rm th}$ cocycle Cheeger constant is equivalent to having a linear cocycle stability rate in the $i^{\rm th}$ dimension.
    Note also that if $h^B_i(\cX,\Gamma)>0$, then in particular the $i^{\rm th}$ cohomology of $\cX$ with $\Gamma$ coefficients vanishes. And that if the $i^{\rm th}$ cohomology of $\cX$ with $\Gamma$ coefficients vanishes, then $h^B_i(\cX,\Gamma)=h_i(\cX,\Gamma)$.
    \begin{rem} \label{rem:omitting_mu_from_notation}
        The stability  rate in the $i^{\rm th}$ dimension $\rho$ of a complex $\cX$ (and thus also the Cheeger constants) depends on the distributions $\mu_i$ and $\mu_{i+1}$.
    \end{rem}

For a positive integer $n$, let $\Sym(n)$ be the symmetric group acting on $[n]=\{1,...,n\}$. Given  permutations $\sigma \in \Sym(n)$ and $\tau\in \Sym(N)$ where $N\geq n$,  the \emph{normalized Hamming distance (with errors)} between them is
\begin{equation}\label{eq:permutation_normalized_Hamming_distance}
d_h(\sigma,\tau)=1-\frac{|\{i\in [n]\mid \sigma(i)=\tau(i)\}|}{N}.
\end{equation}
We may use the notation $\Vert \sigma\Vert$ for $d_h(\sigma,\Id)$, as was done in \eqref{eq:norm_of_cochain}.
Moreover, since the normalized Hamming distance can compare permutations of different sizes, we will study them in a collective manner. Let 
\[
C^i(\cX,\Sym)=\bigsqcup_{n=2}^{\infty} C^i(\cX,\Sym(n)),\ Z^i(\cX,\Sym)=\bigsqcup_{n=2}^{\infty} Z^i(\cX,\Sym(n)),\ 
B^i(\cX,\Sym)=\bigsqcup_{n=2}^{\infty} B^i(\cX,\Sym(n))
\]
be the \emph{$i$-cochains with permutation coefficients}, the \emph{$i$-cocycles with permutation coefficients} and the \emph{$i$-coboundaries with permutation coefficients} respectively. Thus, $d_h$ defines a metric on $C^i(\cX,\Sym)$ as in  \eqref{eq:distance_between_cochains}, and thus also a norm on it as in equation \eqref{eq:norm_of_cochain}. Furthermore, the definitions of cocycle stability and Cheeger constants extend naturally to the permutation coefficicents setup.

Let $\langle S|R\rangle$ be a group presentation with $|S|,|R|<\infty$, and equipped with probability distributions $\mu_S$ and $\mu_R$ on the generators and relations respectively. The \emph{presentation complex} $\cX_{\langle S|R\rangle}$ is the following polygonal complex: It has a single vertex $*$, and an edge $e(s)$ for every generator $s\in S$. Then, for every $r= s_1^{\eps_1}\cdot...\cdot s_\ell^{\eps_\ell}\in R$, we add a polygon (together with all its orientations) $\pi(r)=e(s_1)^{\eps_1}...e(s_\ell)^{\eps_\ell}$, where $e(s)^{-1}=\overline{e(s)}$ and $e(s)^{1}=e(s)$. The presentation $\langle S|R\rangle$ is \emph{$\rho$-homomorphism stable}  (in permutations) if $\cX_{\langle S|R\rangle}$ is $\rho$-cocycles stable in the $1^{\rm st}$ dimension with permutation coefficients, where $\mu_1=\mu_S$ and $\mu_2=\mu_R$. Later in the paper, we may refer to the \emph{Cheeger constant of a group presentation}.  In that, we mean the cocycle Cheeger constant in dimension $1$ with permutation coefficients of the associated presentation complex.

 A \emph{combinatorial map} $f\colon \cY\to \cX$ between two polygonal complexes is a function that maps $i$-cells of $\cY$ to $i$-cells of $\cX$ in an incidence preserving manner. Note that such a map extends uniquely to (oriented) paths in $\cY$. 
A \emph{covering} of $\cX$ is a combinatorial map $f\colon \cY\to \cX$ which is a topological covering (See  Chapter 1.3 in \cite{Hatcher_Alg_Top}). As long as $\cX$ is connected, the \emph{degree} of the covering is well defined and is equal to $|f^{-1}(x)|$ for any point $x\in \cX$. If the degree of the covering $\cY$ is $n\in \mathbb{N}$,  we call it an $n$-covering of $\cX$. 

    Let $\cX$ be a connected polygonal complex. Following Claim 6.1 in \cite{CL_part1},  there is a one to one correspondence between $n$-coverings of $G(\cX)$ and orbits of $1$-cochains of $\cX$ with $\Sym(n)$ coefficients under the action described in \eqref{eq:action_0-coch_on_1-coch}. We recall it now.
For every $1$-cochain $\alpha\colon \raE(\cX)\to \Sym(n)$, the corresponding covering $f\colon \cG \to G(\cX)$ is defined to be:
    \begin{align}
        &V(\cG)=V(\cX)\times [n];\notag\\
        &\raE(\cG)=\raE(\cX)\times [n];\notag\\
 \forall x\xrightarrow{e}y\in \raE(\cX),i\in [n]\ \colon \ \ &\tau(e,i)=(y,i),\ \iota(e,i)=(x,\alpha(e).i),\ \overline{(e,i)}=(\bar e,\alpha(e).i);\footnote{Note that the permutation $\alpha(e)$ tells us how the fiber over the terminal point $y$ is mapped to the fiber over the origin point $x$ and not the other way around. This is because of our choice of left actions.}\label{eq:cocycle_to_covering}\\
\forall x\in V(\cX),e\in \raE(\cX),i\in[n]\ \colon \ \ &f(x,i)=x,\quad f(e,i)=e.\notag
    \end{align}
 
On the other hand, if $f\colon \cG\to G(\cX)$ is an $n$-covering, then one can construct a $1$-cochain $\alpha\colon \raE(\cX)\to \Sym(n)$ as follows:
     For every $x\in V(\cX)$, $|f^{-1}(x)|=n$. Hence we can label the vertices of $f^{-1}(x)$ by $\{(x,i)\}_{i=1}^n$. Note that for each vertex there are $n!$ ways of choosing these labels. Now, for every $e\in \raE(\cX)$ and $e'\in f^{-1}(e)$ define $e'=(e,i)$ if $\tau(e')=(y,i)$.  Then, $\alpha(e).i$ is the second coordinate of $\iota(e,i)$. The different choices of labeling for the fibers $f^{-1}(x)$ would give rise to different $1$-cochains \textbf{in the same orbit} of the action of $0$-cochains.
     This correspondence sends $1$-cocyceles to  $n$-coverings of the complex $\cX$. Futhermore, $1$-coboundaries are in correspondence with disjoint unions of the base complex $\cX$. Lastly, this correspondence translates $\rho$-cocycle stability to a topological robustness of coverings --- If for a covering $\cY$  of the underlying graph $G(\cX)$ most polygons in $\cX$ lift to closed paths in $\cY$, then $\cY$ is close (in an appropriate metric on graphs) to  (the $1$-skeleton of) a cover of $\cX$.

\section{\textbf{Cheeger constants with permutation coefficients of complexes and groups}}\label{sec:Examples}

As was proved in Part I of this paper \cite{CL_part1}, homomorphism stability, covering stability and  cocycle stability are equivalent.  Furthermore, the first cocycle Cheeger constant \eqref{eq:cocyc_Cheeger_constant} of a polygonal complex $\cX$ is positive if and only if the cocycle stability rate of $\cX$ is linear. This leads to two natural problems:
\begin{enumerate}
    \item Which polygonal complexes have a positive cocycle Cheeger constant in the first dimension with permutation coefficients? Furthermore, can you bound the Cheeger constant of (naturally arising)  families of complexes in a uniform manner? In Sections \ref{sec:Cheeger_of_complete_complexes} and \ref{sec:Cheeger_of_spherical_buildings}  we provide elementary proofs that complete complexes and ${A}_2$ spherical buildings have cocycle Cheeger constants which are uniformly bounded away from zero. 
    \item By Fact 3.4, Theorem 1.2 and Theorem 1.3 in Part I \cite{CL_part1}, if $\pi_1(\cX_1,*)\cong \pi_1(\cX_2,*)$ for two polygonal complexes $\cX_1$ and $\cX_2$, then their stability rates differ by a multiplicative constant. In particular, $h_1(\cX_1,\Sym)>0$ if and only if $h_1(\cX_2,\Sym)>0$. Assume the presentation complex  of  $\Gamma\cong \langle S|R\rangle$ has a positive Cheeger constant. Can we bound  the Cheeger constant of all \textbf{presentation complexes} of $\Gamma$ uniformly? The answer turns out to be no. In fact, never! Every group has presentations with an asymptotically close to zero Cheeger constant, as we prove in Section \ref{sec:small_cheeger_of_pres}.
\end{enumerate}

Since homomorphism stability is a widely studied subject, there are many examples of presentations, and thus complexes, which are $\rho$-stable for some \textbf{unspecified} $\rho$.\footnote{This is in the spirit of Section 3.2 in Part I of this paper \cite{CL_part1}.} There are cases where meaningful bounds on $\rho$ were proven. 
For example, Becker--Mosheiff \cite{BeckerMosheiff} analyzed $\rho$ for presentations of infinite abelian groups, proving that $\rho(\eps)$ is  $\Omega(\eps^{\frac{1}{d}})$ and $O(\eps^{\frac{1}{2^d}})$, where $d$ is the infinite rank of the abelian group. Also, Lazarovich--Levit--Minsky \cite{levit_lazarovich2019surface} analyzed $\rho$ for surface groups, proving that  $\rho(\eps)$ is $\Theta(-\eps\ln \eps)$. 
Note that the above results imply that the cocycle Cheeger constant of any triangulation of a compact surface which is not a sphere is $0$. This demonstrates the discrepancy between being a cocycle expander, and being stable for some rate $\rho$ which is not necessarily linear.
Becker--Chapman \cite{BC22} proved that the multiplication table presentation of a finite group yields a presentation complex with cocycle Cheeger constant larger than $\frac{1}{3000}$, regardless of the chosen group. And, on the other hand, there are examples of group presentations that are not $\rho$-homomorphism stable for any rate $\rho$ (cf. \cite{GlebskyRivera,BLT,Ioana}).

\subsection{The complete complex is a coboundary expander with permutation coefficients}\label{sec:Cheeger_of_complete_complexes}

In this section and in Section \ref{sec:Cheeger_of_spherical_buildings}, the distributions we use on the cells of the complexes are the uniform ones. The proofs we provide work the same if we choose the uniform distribution on polygons and the induced descending measure  on the edges (see Remark \ref{rem:dist_on_edges_does_not_matter}).
We begin with a technical Lemma that appeared in Section 6.1 of the first part of this paper \cite{CL_part1}, as part of the proof of Theorem 1.1 there.

\begin{lem}\label{lem:identity_on_spanning_tree}
    Let $\cX$ be a connected polygonal complex, let $T$ be a spanning tree in $G(\cX)$ and let $*\in V(\cX)$. Then, for every $1$-cochain $\alpha\colon \raE(\cX)\to \Sym(n)$, there is a $0$-cochain $\beta\colon V(\cX)\to \Sym(n)$ such that $\beta(*)=\Id$ and $\beta.\alpha(e)=\Id$ for every $e\in T$.
\end{lem}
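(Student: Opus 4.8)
The plan is to build $\beta$ by induction on the distance from $*$ in the spanning tree $T$, propagating the identity condition edge by edge. First I would root $T$ at $*$ and set $\beta(*)=\Id$. For the inductive step, suppose $x$ is a vertex whose value $\beta(x)$ has already been defined, and let $x\xrightarrow{e}y$ be an edge of $T$ with $y$ a child of $x$ (so $y$ has not yet been assigned). We want $\beta.\alpha(e)=\beta(x)^{-1}\alpha(e)\beta(y)=\Id$, which forces the definition $\beta(y):=\alpha(e)^{-1}\beta(x)$. Since $T$ is a tree, every vertex other than $*$ has a unique parent edge, so this recursion assigns exactly one value to each vertex and never creates a conflict; and since $T$ is spanning and $\cX$ is connected, every vertex is reached.

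The key steps, in order, are: (1) fix a root and orient $T$ away from $*$, giving each $y\neq *$ a unique parent $p(y)$ with a unique oriented tree-edge $e_y\colon p(y)\to y$; (2) define $\beta(*)=\Id$ and, traversing $T$ in order of increasing tree-distance from $*$, set $\beta(y)=\alpha(e_y)^{-1}\beta(p(y))$; (3) check well-definedness --- each $\beta(y)$ depends only on already-defined data, so a breadth-first (or depth-first) traversal makes this a genuine definition --- and check $\beta(*)=\Id$ by construction; (4) verify $\beta.\alpha(e_y)=\beta(p(y))^{-1}\alpha(e_y)\beta(y)=\beta(p(y))^{-1}\alpha(e_y)\alpha(e_y)^{-1}\beta(p(y))=\Id$ for each tree-edge, using the definition of the action in \eqref{eq:action_0-coch_on_1-coch}; (5) observe that for the reversed orientation $\bar e_y\colon y\to p(y)$ we get $\beta.\alpha(\bar e_y)=\Id$ as well, using anti-symmetry $\alpha(\bar e_y)=\alpha(e_y)^{-1}$ and $\beta.\alpha(\bar e_y)=(\beta.\alpha(e_y))^{-1}$, so the conclusion holds for every edge of $T$ regardless of orientation.

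There is essentially no hard obstacle here --- the statement is the standard "gauge-fixing along a spanning tree" fact. The only point requiring a modicum of care is making precise that the recursion in step (2) terminates and is consistent: this is exactly the acyclicity of $T$ (no vertex is its own ancestor, and the parent map strictly decreases tree-distance to $*$), so the traversal is well-founded and every vertex receives precisely one value. If one prefers a non-inductive phrasing, one can instead define, for each vertex $y$, the unique reduced path $*=v_0\xrightarrow{e_1}v_1\xrightarrow{e_2}\cdots\xrightarrow{e_k}v_k=y$ in $T$ and set $\beta(y)=\alpha(e_k)^{-1}\cdots\alpha(e_1)^{-1}$; one then checks directly that this agrees with the recursive definition and satisfies $\beta.\alpha(e)=\Id$ for each $e\in T$. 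Either way, the proof is short, and the main thing to get right is simply the bookkeeping of left versus right multiplication dictated by the left-action convention in \eqref{eq:action_0-coch_on_1-coch}.
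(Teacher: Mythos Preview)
Your proposal is correct and takes essentially the same approach as the paper. Your non-inductive formula $\beta(y)=\alpha(e_k)^{-1}\cdots\alpha(e_1)^{-1}$ is exactly the paper's definition $\beta(y)=\alpha(\pi_{y\to *})$ (the product along the reversed tree-path from $y$ back to $*$), and the inductive phrasing is just the obvious unwinding of this formula.
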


\begin{proof}[Proof sketch]
    Recall that $\alpha$ extends to paths in $G(\cX)$. Choose $\beta(x)=\alpha(\pi_{x\to *})$, where $\pi_{x\to *}$ is the unique non-backtracking path in $T$ from $x$ to $*$. It is straightforward to check that $\beta$ satisfies the desired conditions.
\end{proof}

\begin{prop}\label{prop:complete_complex_stable}
    Let $\cX$ be the complete $2$-dimensional simplicial complex on $d+1$ vertices, namely the $2$-skeleton of the $d$-dimensional simplex, for $d\geq 2$. Then, $\cX$ is $\rho$-cocycle stable with rate $\rho(\eps)=\frac{d-1}{d+1}\eps$. Namely, $h_1(\cX,\Sym)\geq\frac{d+1}{d-1}>1$.
\end{prop}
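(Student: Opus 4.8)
The plan is to exploit the transitive symmetry of the complete complex by a \emph{coning} argument, one cone per vertex, followed by an averaging step that selects a good apex. Fix a $1$-cochain $\alpha\colon\raE(\cX)\to\Sym(n)$ and, for a vertex $v$, define a $0$-cochain $\beta_v\colon V(\cX)\to\Sym(n)$ by $\beta_v(u)=\alpha(\overrightarrow{u\to v})$ for $u\neq v$ and $\beta_v(v)=\Id$. Unwinding \eqref{eq:action_0-coch_on_1-coch}, one finds that $\beta_v.\alpha$ is the identity on every edge incident to $v$, while for an edge $\overrightarrow{u\to w}$ with $u,w\neq v$,
\[
\beta_v.\alpha(\overrightarrow{u\to w})=\alpha(\overrightarrow{v\to u})\,\alpha(\overrightarrow{u\to w})\,\alpha(\overrightarrow{w\to v})=\delta\alpha(\overrightarrow{v\to u\to w\to v}),
\]
which is exactly the value of $\delta\alpha$ on the triangle $t=\{v,u,w\}$ (with a specific orientation of its perimeter). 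Since the Hamming norm on $\Sym(n)$ is invariant under conjugation and inversion, $\lVert\delta\alpha(\cdot)\rVert$ descends to a well-defined function $\lVert\delta\alpha(t)\rVert$ of the unoriented triangle $t$, and $\lVert\beta_v.\alpha(\overrightarrow{u\to w})\rVert=\lVert\delta\alpha(\{v,u,w\})\rVert$.

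Now I count. The assignment $e\mapsto\{v\}\cup e$ is a bijection from the $\binom d2$ edges avoiding $v$ onto the $\binom d2$ triangles containing $v$; since the edges incident to $v$ contribute $0$ to the norm, averaging over the (uniform) edge measure gives
\[
\lVert\beta_v.\alpha\rVert=\frac{\binom d2}{\binom{d+1}2}\,\Ex_{t\ni v}\lVert\delta\alpha(t)\rVert=\frac{d-1}{d+1}\,D_v,\qquad D_v:=\Ex_{t\ni v}\lVert\delta\alpha(t)\rVert.
\]
Averaging $D_v$ over the $d+1$ vertices and swapping the order of summation (each triangle has $3$ vertices), the combinatorial factor simplifies to $1$:
\[
\Ex_{v}D_v=\frac{3\binom{d+1}3}{(d+1)\binom d2}\,\Ex_{t}\lVert\delta\alpha(t)\rVert=\lVert\delta\alpha\rVert.
\]
Hence there is a vertex $v$ with $D_v\le\lVert\delta\alpha\rVert$. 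For that $v$, using $d(\alpha,B^1(\cX,\Sym))=\inf_{\beta}\lVert\beta.\alpha\rVert$ and $B^1(\cX,\Sym)\subseteq Z^1(\cX,\Sym)$,
\[
d\big(\alpha,Z^1(\cX,\Sym)\big)\le d\big(\alpha,B^1(\cX,\Sym)\big)\le\lVert\beta_v.\alpha\rVert=\tfrac{d-1}{d+1}D_v\le\tfrac{d-1}{d+1}\lVert\delta\alpha\rVert,
\]
which is the asserted stability with $\rho(\eps)=\frac{d-1}{d+1}\eps$; rearranging the definition \eqref{eq:cocyc_Cheeger_constant} then gives $h_1(\cX,\Sym)\ge\frac{d+1}{d-1}$. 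The argument is unchanged for the descending measure, since in the complete complex every edge lies in exactly $d-1$ triangles, so the induced edge measure is again uniform (cf. Remark~\ref{rem:dist_on_edges_does_not_matter}).

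This proof has no genuine obstacle; the only points requiring care are the orientation bookkeeping for the triangular $2$-cells (resolved by the norm being a class function) and keeping the two normalizations — averaging over edges versus over triangles — consistent. It is worth noting that the cruder route, first invoking Lemma~\ref{lem:identity_on_spanning_tree} to make $\alpha$ trivial on the star of a fixed vertex and then comparing $\lVert\alpha\rVert$ with the coboundary on the triangles through that vertex, loses a factor and only yields $\rho(\eps)=\frac{d-1}{3}\eps$; the vertex-averaging step is exactly what upgrades this to the sharp constant.
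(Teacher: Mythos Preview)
Your proof is correct and follows essentially the same approach as the paper: both arguments cone at each vertex $v$ (your explicit $\beta_v$ is exactly the $0$-cochain produced by Lemma~\ref{lem:identity_on_spanning_tree} applied to the star tree at $v$), identify $\beta_v.\alpha$ on the opposite edge with $\delta\alpha$ on the corresponding triangle, and then average over $v$ to obtain the identity $\Ex_v\lVert\beta_v.\alpha\rVert=\tfrac{d-1}{d+1}\lVert\delta\alpha\rVert$. The only cosmetic differences are that you select a single good apex rather than bounding by the average, and you use the inclusion $B^1\subseteq Z^1$ whereas the paper invokes the vanishing of $H^1$ to get equality; neither affects the argument or the constant.
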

\begin{proof}
    Recall that the vertices of the $d$-dimesnional simplex are $V(\cX)=\{1,...,d+1\}$. Further, recall that $\raE(\cX)=\{xy\mid x\neq y\in V(\cX)\}$ and $\raP(\cX)=\{xyz\mid x\neq y\neq z\in V(\cX)\}$.\footnote{Though slightly confusing, in the notation $x\neq y\neq z$ we mean that all three elements are distinct from one another.} Let $\alpha\colon \raE(\cX)\to \Sym(n)$ be a $1$-cochain.  For every fixed $x\in V(\cX)$, we can choose a spanning tree $T$ which is the star centered at $x$. By Lemma \ref{lem:identity_on_spanning_tree}, we can conjugate $\alpha$ by a $0$-cochain $\beta_x$ to be trivial on $T$.
    Since $\delta\alpha(\Delta)$ is  conjugate to $\delta\beta_x.\alpha(\Delta)$ for every triangle $\Delta$, we have
    \[
        \begin{split}
\Vert\delta\alpha\Vert=\Ex_{x\neq y\neq z\in V(\cX)}[d_h(\delta\alpha(xyz),\Id)]=
    \Ex_{x\neq y\neq z\in V(\cX)}[d_h(\delta\beta_x.\alpha(xyz),\Id)]=(\diamondsuit).
\end{split}
   \]
   Recall that $\delta\beta_x.\alpha(xyz)=\beta_x.\alpha(xy)\beta_x.\alpha(yz)\beta_x.\alpha(zx)$. Now, by our choice, $\beta_x.\alpha$ is the identity permutation on every edge with endpoint $x$. Therefore $\delta\beta_x.\alpha(xyz)=\beta_x.\alpha(yz)$. Moreover, if we would have allowed $x=y$ or $x=z$, then $\delta\beta_x.\alpha(xyz)=\beta_x.\alpha(yz)=\Id$ in these cases. Thus, by letting the expectation go over ordered triplets $xyz$ assuming only that $y\neq z$, we have
   \[
    (\diamondsuit)=\frac{d+1}{d-1}\Ex_{x, y\neq z\in V(\cX)}[d_h(\beta_x.\alpha(yz),\Id)]
    =\frac{d+1}{d-1}\Ex_{x\in V(\cX)}[\Vert\beta_x.\alpha\Vert].
    \] 
    All in all, 
    \begin{equation}\label{eq:local_defect_analysis_complete_complex}
        \begin{split}
\Vert\delta\alpha\Vert=\frac{d+1}{d-1}\Ex_{x\in V(\cX)}[\Vert\beta_x.\alpha\Vert].
\end{split}
    \end{equation}
    
Further, the $1^{\rm st}$ cohomology of $\cX$ vanishes, namely  $B^1(\cX,\Sym)=Z^1(\cX,\Sym)$  --- this is because the fundamental group of the complete complex is trivial, and by Corollary \ref{cor:first_cohomology_vanishes_equivalence}\footnote{This result also appeared as Fact 5.11 in \cite{CL_part1}.} we deduce the vanishing of the first cohomology.  Since  $d_h(\alpha,B^1(\cX,\Sym))= \inf\{\Vert \beta.\alpha\Vert\mid \beta\colon V(\cX)\to \Sym(n) \}$ for every complex $\cX$,
\begin{equation}\label{eq:global_defect_analysis_complete_complex}
    \begin{split}
 d_h(\alpha, Z^1(\cX,\Sym))
&= \min\{\Vert \beta.\alpha\Vert\mid \beta\colon V(\cX)\to \Sym(n) \}\\
&\leq \Ex_{x\in V(\cX)}[\Vert \beta_x.\alpha\Vert].
\end{split}
\end{equation}
Combining \eqref{eq:local_defect_analysis_complete_complex} and \eqref{eq:global_defect_analysis_complete_complex}, we get the desired rate.
\end{proof}

\begin{rem}
    Since the first cohomology of $\cX$ vanishes, we also deduce that $h_1^B(\cX,\Sym)\geq \frac{d+1}{d-1}$ in this case. As $h^B_1(\cX,\FF_2)\geq h_1^B(\cX,\Sym)$, we deduce the same for $\FF_2$ coefficients. Compare with  Proposition 2.1 of \cite{linial_meshulam2006homological}, where a lower bound of $\nicefrac{1}{40}$ on $h^B_1(\cX,\FF_2)$ is given.
\end{rem}

\subsection{Spherical buildings of type $A_2$ are coboundary expanders with permutation coefficients}\label{sec:Cheeger_of_spherical_buildings}

\begin{defn}[$A_2$ spherical building]
    Let $\FF_q$ be the finite field with $q$ elements. The vertices of the $A_2$ spherical building $\mathcal{B}=\mathcal{B}(q)$ are the non-trivial linear subspaces of $\FF_q^4$. Two vertices of $\mathcal{B}$ are adjacent  if one is contained in the other (as sets). For triangles, we take the clique complex of the resulting graph --- namely, add a triangle whenever we see three vertices that are all connected to one another. 
\end{defn}

\begin{defn}
    We say that a closed path $\pi$ in $\cX$ can be \emph{filled} by polygons $\pi_1,...,\pi_k\in P(\cX)$ if one can draw a Van Kampen diagram whose perimeter is $\pi$ and its cells are the polygons. Namely, $\pi$ is a product of conjugates of $\pi_1,...,\pi_k$ in the fundamental group of $\cX$. See \cite{babson_hoffman_kahle_2011fundamental} for more about fillings.
\end{defn}

\begin{claim}\label{claim:filling_inequality}
    Let $\alpha\colon \raE(\cX)\to \Sym(n)$ be a $1$-cochain. Recall the notation $\Vert\sigma\Vert=d_h(\sigma,\Id)$ for a permutation $\sigma$. If $\pi$ is a closed path in $\cX$ which is filled by $\pi_1,...,\pi_k$, then $\Vert \alpha(\pi)\Vert \leq \sum_{i=1}^k \Vert \alpha(\pi_i)\Vert$.
\end{claim}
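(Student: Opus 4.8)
The statement is a "filling inequality" for the non-abelian norm $\Vert\cdot\Vert$ coming from the normalized Hamming distance: if $\pi$ is a closed path filled by $\pi_1,\dots,\pi_k$, then $\Vert\alpha(\pi)\Vert\le\sum_i\Vert\alpha(\pi_i)\Vert$. The plan is to argue by induction on the number $k$ of cells in a Van Kampen diagram witnessing the filling, with the base case being a single polygon ($\pi$ is a cyclic conjugate of some $\pi_1^{\pm1}$), where the claim follows because $\Vert\cdot\Vert=d_h(\cdot,\Id)$ is conjugation-invariant and inversion-invariant (the Hamming distance is a bi-invariant metric, as recalled in the Preliminaries), so $\Vert\alpha(\pi)\Vert=\Vert\alpha(\pi_1)\Vert$.

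For the inductive step, I would peel off one cell from the boundary of the Van Kampen diagram. Concretely, pick a polygon $\pi_j$ of the diagram that shares at least one edge with the outer boundary $\pi$. Writing the boundary word as $\pi = u\cdot w$, where $w$ is the portion of $\partial\pi_j$ lying on the boundary and $u$ is the rest, one can reroute around $\pi_j$: the path $\pi' = u\cdot (w')^{-1}$, where $w'$ is the complementary portion of $\partial\pi_j$ (so that $w\cdot (w')^{-1}$ is, up to cyclic rotation and orientation, the full boundary of $\pi_j$), is a closed path filled by the remaining $k-1$ cells. Evaluating $\alpha$: we have $\alpha(\pi) = \alpha(u)\alpha(w)$ and $\alpha(\pi') = \alpha(u)\alpha(w')^{-1}$, hence $\alpha(\pi) = \alpha(\pi')\cdot\alpha(w')\alpha(w)$. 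Now $\alpha(w')\alpha(w)$ is exactly $\alpha$ evaluated around the (cyclically rotated, possibly reversed) boundary of $\pi_j$, i.e.\ a conjugate of $\alpha(\pi_j)^{\pm1}$ by $\alpha(u)$-type elements. By the triangle inequality for $d_h$ applied as $\Vert xy\Vert\le\Vert x\Vert+\Vert y\Vert$ (bi-invariance), $\Vert\alpha(\pi)\Vert\le\Vert\alpha(\pi')\Vert+\Vert\alpha(w')\alpha(w)\Vert\le\Vert\alpha(\pi')\Vert+\Vert\alpha(\pi_j)\Vert$, and the inductive hypothesis bounds $\Vert\alpha(\pi')\Vert$ by the sum of $\Vert\alpha(\pi_i)\Vert$ over the remaining cells, giving the result.

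An alternative, perhaps cleaner, route is purely algebraic: the hypothesis says $\alpha(\pi)$, as an element of $\Sym(n)$, equals a product $\prod_{i=1}^k g_i\,\alpha(\pi_i)^{\epsilon_i}\,g_i^{-1}$ of conjugates of the $\alpha(\pi_i)^{\pm1}$ — this is just the statement that the relator $\pi$ is a product of conjugates of $\pi_1^{\pm1},\dots,\pi_k^{\pm1}$ in $\pi_1(\cX)$, pushed forward through $\alpha$ (using that $\alpha$ extends multiplicatively to paths and that each $\alpha(\pi_i)$ is computed on a fixed basepoint after conjugating). Then one applies subadditivity of $\Vert\cdot\Vert$ under products together with conjugation- and inversion-invariance: $\Vert\prod_i g_i\alpha(\pi_i)^{\epsilon_i}g_i^{-1}\Vert\le\sum_i\Vert g_i\alpha(\pi_i)^{\epsilon_i}g_i^{-1}\Vert=\sum_i\Vert\alpha(\pi_i)\Vert$. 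I expect the main obstacle to be purely bookkeeping: making precise how $\alpha(\pi)$ decomposes into conjugates of the $\alpha(\pi_i)$ while tracking basepoints and orientations consistently across the Van Kampen diagram (in particular, that the "defect" contributed by cell $i$ is genuinely a conjugate of $\alpha(\pi_i)^{\pm1}$ and not something larger). Neither approach needs anything beyond bi-invariance of $d_h$ and the multiplicativity of $\delta$ on $1$-cochains, both already in hand.
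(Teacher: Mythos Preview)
Your proposal is correct, and your ``alternative, perhaps cleaner, route'' is precisely the paper's proof: the definition of filling gives $\pi = \sigma_1\pi_1\bar\sigma_1\cdots\sigma_k\pi_k\bar\sigma_k$ for some paths $\sigma_i$, so $\alpha(\pi)=\prod_i \alpha(\sigma_i)\alpha(\pi_i)\alpha(\sigma_i)^{-1}$, and the bound follows from subadditivity and conjugation-invariance of $\Vert\cdot\Vert$. Your inductive cell-peeling argument also works but is unnecessary overhead once you recognize that the product-of-conjugates decomposition is the \emph{definition} of being filled, not something you need to derive.
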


\begin{proof}
The fact $\pi$ can be filled by $\pi_1,...,\pi_k$ means that it can be written in a backtracking way as $$\pi=\sigma_1\pi_1\bar \sigma_1...\sigma_k\pi_k\bar \sigma_k,$$ where $\sigma_i$ are some paths in $G(\cX)$. Hence
\[
\begin{split}
  \Vert\alpha(\pi)\Vert&=\left\Vert \prod_{i=1}^k \alpha(\sigma_i\pi_i\bar \sigma_i) \right\Vert\\
  &\leq \sum_{i=1}^k  \underbrace{\Vert\alpha(\sigma_i\pi_i\bar \sigma_i)\Vert}_{\Vert\alpha(\sigma_i)\alpha(\pi_i)\alpha(\bar \sigma_i)\Vert} \\
  &=\sum_{i=1}^k \left\Vert \alpha(\pi_i) \right\Vert,
\end{split}
\]
where the second line is by the triangle inequality of the normalized Hamming metric, and the third line is by the fact $\alpha(\sigma_i)^{-1}=\alpha(\bar \sigma_i)$ and the conjugate invariance of the normalized Hamming metric.
\end{proof}

We bring the following fact without a proof.
\begin{fact} \label{fact:properties_of_building}
    Note that 
\begin{itemize}
    \item The $1$-skeleton of $\cB$ has diameter $3$.
    \item The automorphism group of $\cB$ acts transitively on the un-oriented triangles of $\cB$.
    \item Every closed path $\pi$ of length $\leq 7$ in the $1$-skeleton $G(\cB)$   can be filled by at most $25$ triangles.\footnote{This bound is not intended to be optimal, but can be deduced in a straightforward manner by a back of the envelope calculation.}
\end{itemize}
\end{fact}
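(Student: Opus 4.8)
The plan is to verify the three bullets separately, working throughout with the concrete model given in the definition: $\cB$ has one vertex for each proper nonzero subspace of $\FF_q^4$, one edge for each strict inclusion, and — being a clique complex, with inclusion a partial order — one triangle for each complete flag $V_1\subsetneq V_2\subsetneq V_3$ (necessarily $\dim V_i=i$).

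\emph{Diameter and transitivity.} For the diameter I would run a short case analysis on the pair $(\dim U,\dim V)\in\{1,2,3\}^2$ of two vertices $U,V$, using $\dim(U+V)+\dim(U\cap V)=\dim U+\dim V$: whenever $U\cap V$ is proper and nonzero it is a common neighbour, and whenever $U+V$ is proper it is a common neighbour, and a quick check gives distance $\le 2$ in all cases except (i) two planes meeting trivially and (ii) a line $L$ and a hyperplane $H$ with $L\not\subseteq H$. In those two cases any common neighbour $Z$ is forced into the impossible chain $P_1\subsetneq Z\subsetneq P_2$ (or $Z\subseteq P_1\cap P_2=0$, or $Z\supseteq P_1+P_2=\FF_q^4$), respectively $L\subsetneq Z\subsetneq H$, so the distance is $\ge 3$; meanwhile $L\subset P\supset P\cap H\subset H$ for any plane $P\supset L$, and $P_1\supset L'\subset (L'+P_2)\supset P_2$ for any line $L'\subset P_1$, are paths of length exactly $3$. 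Hence the diameter is $3$. Transitivity is then immediate: $\mathrm{GL}_4(\FF_q)\le\mathrm{Aut}(\cB)$ acts by simplicial automorphisms, triangles are complete flags, and the linear map carrying a basis adapted to one complete flag to a basis adapted to another moves the first flag to the second.

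\emph{Fillings.} Spherical buildings are simply connected, so every closed path is a product of conjugates of triangles and the only issue is the bound $25$. I would induct on the length $\ell\le 7$ of a non-backtracking closed path $\pi=v_0\cdots v_{\ell-1}v_0$ in $G(\cB)$. If $\ell\le 3$ then $\pi$ is trivial or a $3$-clique, hence filled by at most one triangle. If $\pi$ has a chord $v_iv_j$ (a non-consecutive pair forming an edge), then $\pi$ splits into two non-backtracking closed paths of lengths summing to $\ell+2$, each strictly shorter, which we fill by induction and combine via Claim \ref{claim:filling_inequality}. For the chordless case I would first prove the structural lemma: if $\pi$ is chordless and $\ell\ge 4$, then at each $v_i$ either $v_{i-1},v_{i+1}\subsetneq v_i$ or $v_i\subsetneq v_{i-1},v_{i+1}$ — since $v_{i-1}\subsetneq v_i\subsetneq v_{i+1}$ would be the chord $v_{i-1}v_{i+1}$ — so the inclusion pattern alternates between local maxima and local minima around $\pi$, forcing $\ell$ even; thus a chordless $\pi$ of length $\le 7$ has length $4$ or $6$. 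For $\ell=4$ with maxima $v_0,v_2$, the subspace $W:=v_0\cap v_2$ contains $v_1$ and $v_3$, is contained in $v_0$ and $v_2$, and differs from all four vertices (else $\pi$ has a chord), so coning $\pi$ to $W$ fills it with four triangles (dually $v_0+v_2$ works if $v_0,v_2$ are minima). For $\ell=6$ I would list the admissible alternating dimension profiles: in the ``$2,3$-alternating'' and ``$1,2$-alternating'' ones all six vertices lie in the star of a single vertex — the intersection of the three maximal vertices, resp.\ the span of the three minimal ones — which is coned away with at most six triangles; the remaining profiles are reduced to the ``$2,3$-alternating'' case (or to a hexagon with a chord, handled by recursion) by cutting the at most three corners at the minimal vertices, each with the plane $v_{i-1}\cap v_{i+1}$, at a cost of at most six triangles, and then coning. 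A crude accounting of this recursion — at most a handful of chord-splittings, each chordless length-$4$ leaf costing $4$ triangles and each chordless length-$6$ leaf a bounded number — fills any closed path of length $\le 7$ by well under $25$ triangles.

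I expect the $\ell=6$ chordless subcase to be the only genuine obstacle: one must check that the admissible alternating dimension profiles are indeed as claimed, and that in each the indicated auxiliary subspace — a sum or intersection of cycle vertices — is a proper nonzero subspace comparable to the relevant cycle vertices (so that the coned pieces really are flags) and is not accidentally equal to one of them. This is precisely the finite ``back-of-the-envelope calculation'' referred to in the footnote, and the gap between whatever careful bookkeeping yields and $25$ is deliberate slack.
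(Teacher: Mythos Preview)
The paper explicitly states this fact \emph{without proof} (``We bring the following fact without a proof''), so there is no argument in the paper to compare yours against. Your treatment of the diameter and of transitivity is correct, and for the filling bound your chord-splitting recursion and the parity argument reducing the chordless case to $\ell\in\{4,6\}$ are sound. (For $\ell=4$ it is worth noting that in the order complex of a poset any $3$-clique is automatically a chain, hence here a complete flag; this is what makes your cone triangles legitimate.)

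There is, however, a genuine gap in the $\ell=6$ step. Replacing a minimum $v_i$ by $v_{i-1}\cap v_{i+1}$ changes something only when both neighbours are hyperplanes and $v_i$ is a line; otherwise $v_{i-1}\cap v_{i+1}=v_i$ and the cut is vacuous. The chordless hexagon with dimension profile $2,1,3,2,3,1$ --- maxima of dimensions $2,3,3$ and minima of dimensions $1,2,1$ --- is a \emph{fixed point} of your procedure: every minimum has $v_{i-1}\cap v_{i+1}=v_i$, and dually every maximum has $v_{i-1}+v_{i+1}=v_i$, so no corner cut (at minima or at maxima) moves it, yet it is neither $2,3$-alternating nor chorded. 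A concrete instance is
\[
\langle e_1,e_2\rangle,\ \langle e_1\rangle,\ \langle e_1,e_3,e_4\rangle,\ \langle e_3,e_4\rangle,\ \langle e_2,e_3,e_4\rangle,\ \langle e_2\rangle.
\]
Your claimed reduction to the $2,3$-alternating case therefore fails here.

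The repair is short. In this profile the two planes $v_0$ and $v_3$ are complementary (else one produces a chord), so a basis $f_1\in v_1,\ f_2\in v_5,\ f_3,f_4\in v_3$ exhibits the whole hexagon inside the apartment of the frame $(f_1,f_2,f_3,f_4)$, a $24$-triangle $2$-sphere, and the smaller of the two disks it bounds has at most $12$ triangles. Equivalently, first replace the dim-$2$ minimum $v_3$ by a line $L\subset v_3$ (cost $2$), reaching the profile $(2,3,3;1,1,1)$, then cut the two dim-$3$ maxima with $m_1+L$ and $m_5+L$ (cost $4$) to get a $1,2$-alternating hexagon, and cone (cost $6$). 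With this one extra move your recursion gives $f(6)\le 12$ and $f(7)\le 13$, well under $25$.
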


\begin{prop}\label{prop:building_stable}
    The complex $\cB$ is $\rho$-cocycle stable for $\rho(\eps)=25\eps$. Namely, $h_1(\cB,\Sym)\geq \frac{1}{25}$, regardless of $q$.
\end{prop}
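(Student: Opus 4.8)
The strategy is to mimic the proof of Proposition~\ref{prop:complete_complex_stable}, replacing the ``star trick'' for the complete complex with the filling estimate of Claim~\ref{claim:filling_inequality} and the structural facts about $\cB$ collected in Fact~\ref{fact:properties_of_building}. Fix a $1$-cochain $\alpha\colon\raE(\cB)\to\Sym(n)$ and set $\eps=\Vert\delta\alpha\Vert$, which is the average of $\Vert\alpha(\Delta)\Vert$ over the (oriented) triangles $\Delta$ of $\cB$. Since the automorphism group of $\cB$ acts transitively on un-oriented triangles, $\eps$ is exactly the ``local defect'' at a typical triangle; in particular one cannot hope to do better than averaging, and transitivity will let us move cost around freely. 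As in the complete-complex case, because $\pi_1(\cB)$ is trivial (the clique complex of the $A_2$ building is simply connected) the first cohomology with $\Sym$ coefficients vanishes by Corollary~\ref{cor:first_cohomology_vanishes_equivalence}, so $d_h(\alpha,Z^1(\cB,\Sym))=d_h(\alpha,B^1(\cB,\Sym))=\inf_\beta\Vert\beta.\alpha\Vert$. Hence it suffices to produce a single $0$-cochain $\beta$ with $\Vert\beta.\alpha\Vert\le 25\eps$.

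The construction of $\beta$ is the heart of the argument. Pick a base vertex $*$ and a spanning tree $T$ of $G(\cB)$; by Lemma~\ref{lem:identity_on_spanning_tree} we may assume $\alpha\equiv\Id$ on $T$, replacing $\alpha$ by a conjugate (this changes neither $\Vert\delta\alpha\Vert$ nor $d_h(\alpha,Z^1)$). Now define $\beta(x)=\alpha(\pi_{x\to *})=\Id$ along $T$, so that $\beta.\alpha(e)=\alpha(e)$ for every edge $e$; the point is to estimate $\Vert\alpha(e)\Vert$ for an arbitrary edge $e=xy$. Since $G(\cB)$ has diameter $3$, the path $\pi_{x\to*}\,e\,\pi_{*\to y}$ together with $e$ closes up into a loop $\pi_e$ at $*$ of length at most $1+3+3=7$, and $\alpha(\pi_e)$ is conjugate to $\alpha(e)$ because $\alpha$ is trivial on $T$. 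By the third bullet of Fact~\ref{fact:properties_of_building}, $\pi_e$ can be filled by at most $25$ triangles $\Delta_1,\dots,\Delta_{25}$ of $\cB$, so Claim~\ref{claim:filling_inequality} gives
\begin{equation*}
\Vert\alpha(e)\Vert=\Vert\alpha(\pi_e)\Vert\le\sum_{j=1}^{25}\Vert\alpha(\Delta_j)\Vert .
\end{equation*}
Averaging this over $e\sim\mu_1$ and using that each triangle of $\cB$ is equidistributed (by automorphism-transitivity, every triangle appears on the right-hand side with controlled multiplicity, averaging to at most $25$ copies of the uniform triangle), the expected value of the right-hand side is at most $25\,\Ex_{\Delta}[\Vert\alpha(\Delta)\Vert]=25\eps$. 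Therefore $d_h(\alpha,Z^1(\cB,\Sym))\le\Vert\alpha\Vert=\Ex_{e}[\Vert\alpha(e)\Vert]\le 25\eps$, which is the claimed rate $\rho(\eps)=25\eps$ and hence $h_1(\cB,\Sym)\ge\frac1{25}$.

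The step I expect to need the most care is the averaging bookkeeping: Claim~\ref{claim:filling_inequality} is stated pointwise (for a fixed $\pi$), and turning it into a statement about the \emph{average} over edges requires knowing that when $e$ ranges over all edges, the triangles $\Delta_j$ appearing in the fillings of the loops $\pi_e$ do not pile up disproportionately on any single triangle --- this is where transitivity of $\mathrm{Aut}(\cB)$ on triangles is used, ideally by choosing the fillings equivariantly (fix one filling of a ``model'' length-$\le 7$ loop and transport it around), so that the induced measure on triangles coming from ``$25$ triangles per edge'' is a constant multiple of the uniform one, with constant $\le 25$. An alternative, cleaner route that sidesteps the multiplicity issue is to bound $\Vert\alpha(e)\Vert$ only for edges and to absorb the diameter-$3$ detour paths into the count (they too are filled by triangles via the same length-$7$ loops), accepting that the constant $25$ in Fact~\ref{fact:properties_of_building} was deliberately chosen loose enough to cover all of this. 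Either way the inequality $h_1(\cB,\Sym)\ge\frac1{25}$, uniform in $q$, follows.
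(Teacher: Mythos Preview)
Your overall architecture is right --- cones method via a radius-$3$ spanning tree, loops of length $\le 7$, fill by $\le 25$ triangles, then average --- and you even flag the correct danger point. But the averaging step, as written, does not close. With a \emph{single fixed} tree $T$ rooted at $*$, the loops $\pi_e$ all pass through $*$, and the triangles appearing in their fillings pile up near the root: a triangle $\Delta$ can occur as some $\Delta_j^e$ for a number of edges $e$ that grows with the complex, so the inequality
\[
\Ex_{e}\Big[\sum_j\Vert\alpha(\Delta_j^e)\Vert\Big]\ \le\ 25\,\Ex_{\Delta}\big[\Vert\alpha(\Delta)\Vert\big]
\]
is not justified. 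Your proposed fix of ``equivariant fillings'' does not help here: the loops $\pi_e$ for a fixed $T$ are not a single $\mathrm{Aut}(\cB)$-orbit (they have different shapes and lengths), so there is no single model loop to transport. Transitivity on triangles tells you nothing about how often a given triangle is used when the tree is held fixed.

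The paper closes this gap by averaging not over edges but over automorphisms. Fix once and for all a vertex $x$, a radius-$3$ tree $T$, and for each edge $e$ a loop $\pi_e$ with a filling $\{\Delta^e_i\}$. For every $\eta\in\mathrm{Aut}(\cB)$ apply Lemma~\ref{lem:identity_on_spanning_tree} to the tree $\eta(T)$, obtaining $\beta_\eta$, and bound
\[
d_h(\alpha,Z^1(\cB,\Sym))\ \le\ \Ex_{\eta}\big[\Vert\beta_\eta.\alpha\Vert\big]
\ =\ \Ex_{\eta}\,\Ex_{e}\big[\Vert\beta_\eta.\alpha(\eta(e))\Vert\big]
\ \le\ \Ex_{e}\sum_{i}\Ex_{\eta}\big[\Vert\alpha(\eta(\Delta^e_i))\Vert\big].
\]
Now the inner expectation is over $\eta$ with $e$ and $\Delta^e_i$ \emph{fixed}, and transitivity on triangles gives $\Ex_{\eta}[\Vert\alpha(\eta(\Delta^e_i))\Vert]=\Vert\delta\alpha\Vert$ exactly, for each $i$. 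Summing at most $25$ such terms yields $25\Vert\delta\alpha\Vert$. In short: do not look for one good $\beta$; average over the whole $\mathrm{Aut}(\cB)$-orbit of trees so that each fixed $\Delta^e_i$ becomes a uniformly random triangle. (Minor point: you also need $T$ to be a BFS tree of radius $3$, not an arbitrary spanning tree, for the length-$7$ bound to hold.)
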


\begin{proof}
    Let $\alpha\colon \raE(\cB)\to \Sym(n)$ be  a  $1$-cochain. Fix a vertex $x\in \cB$ and a spanning tree $T$ of radius $3$ from $x$ (there exists such a tree because of the diameter condition in Fact \ref{fact:properties_of_building}). 
    For every $\eta\in \textrm{Aut}(\cB)$ we get a new base point $\eta(x)$ and a new spanning tree $\eta(T)$. By Lemma \ref{lem:identity_on_spanning_tree}, we can conjugate $\alpha$ by a $0$-cochain $\beta_\eta$ so that it is the identity on $\eta(T)$. As for the complete complex, the cohomology of $\cB$ vanishes, hence
    \[
    \begin{split}
         d_h(\alpha,Z^1(\cB,\Sym)))&=\min\{\Vert \beta.\alpha\Vert\mid \beta\colon V(\cX)\to \Sym(n) \}\\
         &\leq \min\{\Vert \beta_\eta.\alpha \Vert\mid \eta\in \textrm{Aut}(\cB)\}\\
         &\leq \Ex_{\eta\in\textrm{Aut}(\cB)}[\Vert \beta_\eta.\alpha \Vert] .
    \end{split}
    \] 
     Since every vertex is at most $3$ steps away from $x$ via the spanning tree $T$, for every edge $e\in \raE(\cB)$ there is a closed path $\pi_e$ of length $\leq 7$ originating from $x$ which traverses $e$ as the only edge (potentially) not in $T$. By Fact \ref{fact:properties_of_building}, there are triangles $\{\Delta^e_i\}_{i=1}^{k_{e}}\subseteq \raP(\cB)$, where $k_{e}\leq 25$, which fill up $\pi_e$. 
Now, the action of $\textrm{Aut}(\cB)$ on  the data $e, \pi_e$ and $\Delta_i^e$ preserves its properties. Namely, the path $\eta(\pi_e)$ traverses $\eta(e)$ such that it is the only edge outside of $\eta(T)$. Moreover, $\eta(\pi_e)$ is filled by $\{\eta(\Delta^e_i)\}_{i=1}^{k_{e}}$. Thus,
\[
\begin{split}
         \Vert\beta_\eta.\alpha(\eta(e))\Vert&=\Vert\beta_\eta.\alpha(\eta(\pi_e))\Vert\\
         &\leq \sum_{i=1}^{k_{e}} \Vert\beta_\eta.\alpha(\eta(\Delta^{e}_i))\Vert\\
         &=\sum_{i=1}^{k_{e}} \Vert\alpha(\eta(\Delta^{e}_i))\Vert,
     \end{split}
\]
where the second line is by Claim \ref{claim:filling_inequality}, and the third is by \eqref{eq:conjugation_by_0_cochain_on_closed_path}.
Combining our observations, we get
\[
\begin{split}
          d_h(\alpha,Z^1(\cB,\Sym)))(\alpha)&\leq \Ex_{\eta\in\textrm{Aut}(\cB)}[\Vert \beta_\eta.\alpha \Vert]\\
          &=\Ex_{\eta\in\textrm{Aut}(\cB)}\Ex_{e\in \raE(\cB)}[\Vert\beta_\eta.\alpha(\eta(e))\Vert]\\
          &\leq \Ex_{\eta\in\textrm{Aut}(\cB)}\Ex_{e\in \raE(\cB)}\left[\sum_{i=1}^{k_{e}} \Vert\delta\alpha(\eta(\Delta^{e}_i))\Vert\right]\\
          &= \Ex_{e\in \raE(\cB)}\left[ \sum_{i=1}^{k_{e}}\left(\Ex_{\eta\in\textrm{Aut}(\cB)}[\Vert\delta\alpha(\eta(\Delta^{e}_i))\Vert]\right)\right]\\
          &=(*).
     \end{split}
\]
Recall that by Fact \ref{fact:properties_of_building}, $\textrm{Aut}(\cB)$ acts transitively on the triangles of $\cB$. In particular, for a fixed triangle $\Delta_i^e$, we have
\[
\Ex_{\eta\in\textrm{Aut}(\cB)}[\Vert\delta\alpha(\eta(\Delta^{e}_i))\Vert]=\Ex_{[\Delta]\in P(\cB)}[\Vert\delta\alpha(\Delta)\Vert]=\Vert \delta \alpha\Vert.
\]
 Therefore,
\[
\begin{split}
  (*)&=\Ex_{e\in \raE(\cB)}\left[k_e\cdot \Vert \delta \alpha\Vert \right] \\
  &\leq 25\Vert \delta \alpha \Vert,
\end{split}
\]
which finishes the proof. 
\end{proof}

\begin{rem}\label{rem:dist_on_edges_does_not_matter}
    Note that in the above proof we did not use the distribution $\mu_1$ on the edges at all. We just used the fact that the distribution $\mu_2$ is uniform.
\end{rem}
\begin{rem}
    We focused on type $A_2$ spherical buildings. The approach we used is usually referred  to as the \emph{cones method} (cf. \cite{LMM_2016expansion_building_like}). This approach  can be further applied to all types of spherical buildings $\mathcal{B}$  of degree $d$ (e.g., of type $A_d$), but with the stability rate degrading as  $d$ increases. The recent  combinatorial methods of Dikstein--Dinur \cite{dikstein2023coboundary}, and specifically  \emph{color restriction}, allows to prove that the stability rate of such spherical buildings is actually independent of $d$.\footnote{Dikstein--Dinur \cite{dikstein2023coboundary} use the framework of Dinur--Meshulam \cite{Dinur-Meshulam} (See Section 4.2 in Part I \cite{CL_part1}). But, the color restriction technique can be applied in our setup, i.e., with the normalized Hamming metric replacing the discrete distance.}  

    Though the proofs that spherical buildings are stable may require intricate arguments, such as the cones method and color restriction, the fact that they are stable with good rates is quite evident. Note that spherical buildings are only ``a few steps'' from being complete complexes. This is essentially the content of Fact \ref{fact:properties_of_building}. Namely, they have a bounded diameter, and every triangle of the complete complex can be filled with a bounded amount of triangles from the building. This means that these complexes are almost as stable as the complete complex, if we use the $L^\infty$-analogues (recall the discussion at the end of Section 4.2 in Part I \cite{CL_part1}). Since we do not use the $L^\infty$-analogues but rather $L^1$-versions, i.e., we sample a polygon uniformly at random in our testers (compare Algorithms 5 and 3 in Part I \cite{CL_part1}), proving stability requires more steps.
\end{rem}

\subsection{Arbitrarily small Cheeger constant of group presentations}\label{sec:small_cheeger_of_pres}

\begin{claim}\label{claim:cheeger_arbitrarily_small}
    For every $\eps>0$ and finitely presented group $\Gamma$, there exists a finite presentation $\langle S|R\rangle \cong\Gamma$ such that 
    \[
    h_1(\cX_{\langle S|R\rangle},\Sym)\leq \eps,
    \]
    where $\cX_{\langle S|R\rangle}$ is the presentation complex of $\langle S|R\rangle$.
\end{claim}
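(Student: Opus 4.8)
The plan is to take any finite presentation $\langle S_0 \mid R_0 \rangle \cong \Gamma$ and \emph{dilute} it by adding many redundant generators and relations that force the Cheeger constant down. The basic mechanism is this: pick an auxiliary generator $t$ together with the single relation $t$ (so $t$ is trivial in $\Gamma$, and adding this pair does not change the group). Now consider the $1$-cochain $\alpha$ that is the identity on every edge of the old presentation and on $e(t)$ takes a fixed nontrivial value $\sigma$, say a transposition viewed inside $\Sym(n)$ for $n$ large. Then $\delta\alpha$ is supported only on polygons whose perimeter reads the word $t$ (or its inverse), so $\Vert \delta\alpha\Vert$ is governed by $\mu_R$-mass of that one relation. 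On the other hand $\alpha$ is far from every cocycle: conjugating by a $0$-cochain $\beta$ cannot make $\beta.\alpha(e(t))$ trivial while keeping it small, so $d_h(\alpha, Z^1) \geq \Vert \sigma\Vert - o(1)$, which is bounded below by a constant. The ratio $\Vert\delta\alpha\Vert / d_h(\alpha,Z^1)$ is then at most (roughly) $\mu_R(r_t)/\text{const}$, and we can make $\mu_R(r_t)$ as small as we like.

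The first step I would carry out is to make this precise with the right bookkeeping on the probability distributions. Starting from $\langle S_0 \mid R_0\rangle$, form $\langle S_0 \cup \{t\} \mid R_0 \cup \{t\}\rangle$; this is still a presentation of $\Gamma$ since $t$ is killed. Choose $\mu_R$ to put mass $1 - \delta$ on the old relations (proportionally to whatever weighting is convenient there) and mass $\delta$ on the new relation $r_t = t$, and choose $\mu_S$ arbitrarily, say uniform, since by Remark \ref{rem:omitting_mu_from_notation} and the cones-style arguments the edge distribution only affects constants — but to be safe I would actually verify directly, as the estimate above only uses $\mu_R$ and the value of $\alpha$ on one edge, so $\mu_1$ genuinely does not enter the upper bound. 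Then $\Vert\delta\alpha\Vert = \delta \cdot \Vert\delta\alpha(\pi(r_t))\Vert = \delta\Vert\sigma\Vert$ exactly (every old polygon evaluates to $\Id$ under $\alpha$ since $\alpha$ is trivial on all old edges), and taking $\sigma$ a transposition in $\Sym(n)$ gives $\Vert\sigma\Vert = 2/n$, though in fact any fixed $\Vert\sigma\Vert$ bounded away from $0$ and bounded above works and is cleaner.

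The second step is the lower bound $d_h(\alpha, Z^1(\cX,\Sym)) \geq c > 0$. Since $d_h(\alpha, Z^1) \geq d_h(\alpha, B^1) = \inf_\beta \Vert\beta.\alpha\Vert$ is false in general (cocycles can be larger than coboundaries only the other way) — wait: actually $B^1 \subseteq Z^1$, so $d_h(\alpha,Z^1) \leq d_h(\alpha,B^1)$, the wrong direction. So I instead argue directly: suppose $\gamma \in Z^1(\cX,\Sym)$ with $d_h(\alpha,\gamma)$ small. The presentation complex has a single vertex, so a $1$-cocycle $\gamma$ is exactly a homomorphism $\pi_1(\cX) = \Gamma \to \Sym(n)$ evaluated on generators (cf. the correspondence recalled in Section \ref{sec:prelim}); in particular $\gamma(e(t))$ must equal the image of the element $t \in \Gamma$, which is trivial, so $\gamma(e(t)) = \Id$. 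Hence $d_h(\alpha(e(t)), \gamma(e(t))) = \Vert\sigma\Vert$, and so $d_h(\alpha,\gamma) \geq \mu_1(e(t)) \cdot \Vert\sigma\Vert$. To keep this bounded below I should choose $\mu_1$ not too small on $e(t)$ — e.g. uniform on $S_0 \cup \{t\}$ gives $\mu_1(e(t)) = 1/(|S_0|+1)$, a constant depending only on $\Gamma$'s fixed base presentation. Combining, $h_1(\cX_{\langle S\mid R\rangle},\Sym) \leq \delta\Vert\sigma\Vert / (\mu_1(e(t))\Vert\sigma\Vert) = \delta(|S_0|+1)$, which is $\leq \eps$ once $\delta \leq \eps/(|S_0|+1)$.

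The main obstacle — really the only subtle point — is the claim that any $1$-cocycle with $\Sym(n)$ coefficients on a one-vertex presentation complex vanishes on $e(t)$; this is where one must invoke that $1$-cocycles correspond to genuine group homomorphisms $\Gamma \to \Sym(n)$ (not just to cochains that close up on the given relators, which a priori could be weaker), so that $t = 1$ in $\Gamma$ forces the evaluation to be $\Id$. If one prefers to avoid even that, an alternative is to add the relator $r_t$ \emph{and} its consequences so directly that $\alpha$ restricted away from $e(t)$ is forced, but the homomorphism description is cleanest and is exactly the content of the correspondence recalled before Claim \ref{claim:cheeger_arbitrarily_small}. A final sanity check to include: verify $\alpha \notin Z^1$ so that it is a legal competitor in the infimum defining $h_1$ — indeed $\delta\alpha \neq \mathbf{0}$ since $\Vert\delta\alpha\Vert = \delta\Vert\sigma\Vert > 0$.
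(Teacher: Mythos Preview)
Your proof is correct and takes a genuinely different, more elementary route than the paper's. The paper first establishes (Claim~\ref{claim:trivial_gp_cheeger_arbitrarily_small}) that the \emph{trivial} group admits presentations with arbitrarily small Cheeger constant --- obtained by contracting a spanning tree in the complete complex on $d$ vertices and invoking a bound from Part~I --- and then forms the free product presentation $\langle S',S''\mid R',R''\rangle\cong\Gamma*\mathbf{1}\cong\Gamma$, carrying the bad test cochain from the trivial-group side over to the combined presentation with \emph{uniform} measures on generators and relations. Your argument instead adjoins a single redundant generator $t$ with the single relation $t$, and pushes the Cheeger constant down not by making the trivial piece intrinsically unstable but by choosing $\mu_R$ to put mass $\delta$ on the relation $t$; the key observation that every cocycle $\gamma$ satisfies $\gamma(e(t))=\Id$ (because cocycles on a one-vertex complex are exactly homomorphisms $\Gamma\to\Sym(N)$ and $t=1$ in $\Gamma$) is clean and correct. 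What you gain is simplicity and no appeal to Part~I; what the paper's proof gains is that it works with the uniform measures on $S$ and $R$ throughout, which is arguably the more canonical setting. Your construction adapts immediately to uniform measures by duplicating each old relation $M$ times (or, equivalently, adding many parallel polygons), so this is not a real limitation. One small imprecision worth cleaning up: you write $d_h(\alpha(e(t)),\gamma(e(t)))=\Vert\sigma\Vert$, but for cocycles $\gamma$ with values in $\Sym(N)$ with $N\neq n$ this is only an inequality $\geq\Vert\sigma\Vert$; fortunately that is all you need.
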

To prove Claim \ref{claim:cheeger_arbitrarily_small} we need the following corollary of Remark  6.7 in Part I \cite{CL_part1}.
\begin{claim}\label{claim:trivial_gp_cheeger_arbitrarily_small}
    For every $\eps>0$, there is a presentation $\langle S|R\rangle$ of the \textbf{trivial group} such that 
    \[
    h_1(\cX_{\langle S|R\rangle},\Sym)\leq \eps.
    \]
\end{claim}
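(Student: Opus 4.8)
The plan is to exhibit, for each $n$, an explicit presentation of the trivial group together with a $1$-cochain that is a near-cocycle (i.e., $\Vert\delta\alpha\Vert$ is tiny) but is far from $Z^1 = B^1$, so that the ratio $\Vert\delta\alpha\Vert / d_h(\alpha, Z^1)$ is at most $\eps$. The natural source for such a presentation is a "balanced" or "padded" presentation of the trivial group in which there are many generators and relations, but only very few relations actually constrain a given generator. Concretely, I would take $S = \{s_1,\dots,s_k\} \cup \{t\}$ where the relations force each $s_i = 1$ (for instance via $R$ containing the words $s_i$ themselves, or short words expressing $s_i$ as a product of others), together with a large number $N$ of "trivial" relations of the form $t^{?}$ or conjugates/commutators that are already consequences — the point being to make $\mu_R$ put almost all its mass on relations not involving a chosen generator $s_1$. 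This is precisely the construction flagged as coming from Remark 6.7 in Part I \cite{CL_part1}, so I would invoke that remark for the explicit family; the role of the proof here is just to read off the Cheeger bound.

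The key steps, in order: (1) Fix $\eps>0$ and choose a presentation $\langle S \mid R\rangle$ of the trivial group, with probability distributions $\mu_S, \mu_R$, in which there is a generator $s$ (say $s = s_1$) such that the total $\mu_R$-mass of relations $r\in R$ whose word contains $s^{\pm 1}$ is less than $\eps$. (2) Define the $1$-cochain $\alpha\colon \raE(\cX_{\langle S|R\rangle})\to \Sym(2)$ by $\alpha(e(s_1)) = \tau$, the transposition in $\Sym(2)$, and $\alpha(e(s_i)) = \Id$ for $i\neq 1$. Since the complex has a single vertex, $B^1$ consists only of cochains of the form $\beta.\alpha$ with $\beta$ a single element of $\Sym(2)$ acting by conjugation — and conjugation in $\Sym(2)$ is trivial — so in fact $B^1$ is just the set of $1$-cochains, wait: that is not right, I need to be careful. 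Let me instead work with $\Sym(n)$ for large $n$: the subtlety is that with one vertex, $0$-cochains are single permutations and act by global conjugation, so $d_h(\alpha, B^1) = \min_{\gamma} \Vert \gamma^{-1}\alpha\gamma\Vert$ where the norm is the average over edges; choosing $\alpha$ to be $\Id$ on all edges except $e(s_1)$, on which it is a fixed-point-free involution, makes $d_h(\alpha, B^1) = \mu_S(s_1)\cdot 1$ regardless of $\gamma$. (3) Compute $\Vert\delta\alpha\Vert$: a relation $r$ not containing $s_1$ evaluates under $\alpha$ to $\Id$, contributing $0$; a relation containing $s_1$ contributes at most $1$. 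Hence $\Vert\delta\alpha\Vert \le \mu_R(\{r : s_1 \in r\}) < \eps$. (4) Therefore
\[
h_1(\cX_{\langle S|R\rangle},\Sym) \le \frac{\Vert\delta\alpha\Vert}{d_h(\alpha,Z^1(\cX_{\langle S|R\rangle},\Sym))} \le \frac{\eps}{\mu_S(s_1)},
\]
and by additionally arranging $\mu_S(s_1)$ to be bounded below (e.g.\ uniform $\mu_S$ on a bounded number of generators, or rescaling $\eps$), this is at most a prescribed target. Since the presentation is of the trivial group, $Z^1 = B^1$ (the first cohomology vanishes, by Corollary \ref{cor:first_cohomology_vanishes_equivalence}), so the denominator is exactly $d_h(\alpha,B^1)$ as computed.

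The main obstacle — and the only genuinely delicate point — is making sure the chosen presentation really is of the trivial group while simultaneously achieving the mass condition $\mu_R(\{r \ni s_1\}) < \eps$ and keeping $\mu_S(s_1)$ bounded below; this is a balancing act between "enough relations to kill the group" and "most relations avoid $s_1$." This is exactly what Remark 6.7 of Part I \cite{CL_part1} is designed to supply (a family of presentations of the trivial group that are arbitrarily "diluted"), so I would lean on that construction rather than rebuild it, and the proof here reduces to the four-line computation above. A secondary point to check is that the inequality for $h_1$ is in the right direction: $h_1$ is an infimum over all non-trivial-coboundary cochains of the ratio, so producing a single bad $\alpha$ gives an upper bound on $h_1$, which is exactly what the claim asserts.
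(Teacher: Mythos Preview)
Your proposal is correct, and the core computation (steps (2)--(4)) is sound: for a single-vertex complex the only coboundary in each $\Sym(n)$ is the identity cochain, so $d_h(\alpha,Z^1)=d_h(\alpha,B^1)=\Vert\alpha\Vert=\mu_S(s_1)$ when $\alpha$ is fixed-point-free on $e(s_1)$ and identity elsewhere, while $\Vert\delta\alpha\Vert\le\mu_R(\{r:s_1\in r\})$.  (Your detour through $\Sym(2)$ and the description of $B^1$ as a conjugation orbit are a bit muddled, but the conclusion is right because conjugation preserves the Hamming norm.)

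Your route, however, is different from the paper's.  The paper does not build a ``padded'' presentation; it takes the complete $2$-complex on $d$ vertices, contracts the spanning path $1\to 2\to\cdots\to d$ to obtain a one-vertex presentation of the trivial group, and then quotes Remark~6.7 of Part~I for the explicit bound $h_1\le \frac{12}{d(d-1)}$.  So Remark~6.7 is not supplying a ``diluted'' presentation in your sense --- it is a bound attached to the contracted complete complex.  In fact, your dilution analysis applied to that particular construction gives only a bounded ratio (each non-tree edge lies in $d-2$ of the $\binom{d}{3}$ triangles, while $\mu_S$ of that edge is roughly $2/(d-1)(d-2)$, so the quotient stays near $3$); the paper's bound must come from a different cochain or a different argument, hidden in Remark~6.7.

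The upshot is that your argument is self-contained and does \emph{not} need Remark~6.7 at all.  The ``delicate balancing act'' you flag as the main obstacle is a non-issue: $\langle s,t\mid s,\ t,\ t^2,\ldots,t^N\rangle$ already presents the trivial group, has $\mu_S(s)=\tfrac12$, and has $\mu_R(\{r\ni s\})=\tfrac{1}{N+1}$, giving $h_1\le \tfrac{2}{N+1}$.  Your approach is thus more elementary than the paper's; the paper's buys a natural geometric source (the simplex) at the cost of invoking an external remark.
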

\begin{proof}[Proof sketch]
    If we take the complete complex on $d$-vertices and contract the spanning tree $1\to 2\to...\to d$, we are left with a polygonal complex $\cX$ with a single vertex. The resulting complex defines a presentation of its fundamental group, which is isomorphic to the trivial group, by taking $E(\cX)$ to be the generating set and $P(\cX)$ to be the relations. By Remark  6.7 in \cite{CL_part1}, this presentation has $h_1(\cX,\Sym)\leq \frac{12}{d(d-1)}$. Hence, by  choosing a large enough $d$, we are done.
\end{proof}

\begin{proof}[Proof of Claim \ref{claim:cheeger_arbitrarily_small}]
    Let $\Gamma\cong \langle S'|R'\rangle$ be a finitely presented group. By Claim \ref{claim:trivial_gp_cheeger_arbitrarily_small}, for every $\eps'>0$ there is a presentation $\langle S''|R''\rangle \cong {\bf 1}$ with $h_1(\cX_{\langle S''|R''\rangle},\Sym)\leq \eps'$. We can choose the sets $S',S''$ and $R',R''$ to be disjoint.  Then, on the one hand,
    \[
\langle S|R\rangle=\langle S',S''|R',R''\rangle \cong \langle S'|R'\rangle *\langle S''|R''\rangle \cong \Gamma * {\bf 1}\cong \Gamma.
    \]
    On the other hand, every map $f\colon S''\to \Sym(n)$ can be extended to $\hat f\colon S'\cup S''\to \Sym(n)$ which is 
 trivial on $S'$. Now, if we choose $\mu_2$ and $\mu_1$ to be uniform on the relations and generators, then $\|\delta\hat f\|=\frac{|R''|}{|R''|+|R'|}\|\delta f\|$ and similarly $d_h(\hat f,Z^1(\cX_{\langle S|R\rangle},\Sym)))=\frac{|S''|}{|S''|+|S'|}d_h( f,Z^1(\cX_{\langle S''|R''\rangle},\Sym)))$. Thus,
 \[\begin{split}
     \frac{\|\delta \hat f\|}{d_h(\hat f,Z^1(\cX_{\langle S|R\rangle},\Sym))}&=\frac{|R''|(|S''|+|S'|)}{|S''|(|R''|+|R'|)}\frac{\|\delta f\|}{d_h( f,Z^1(\cX_{\langle S''|R''\rangle},\Sym))}\\
     &\leq (1+|S'|)\cdot \frac{\|\delta f\|}{d_h( f,Z^1(\cX_{\langle S''|R''\rangle},\Sym))}\\
     &\leq (1+|S'|)\eps'.
 \end{split}
 \]
    By choosing  $\eps'=\frac{\eps}{1+|S'|}$, we deduce the claim.
\end{proof}

\section{\textbf{High dimensional expansion and Large cosystoles}}\label{sec:large_cosystols}

\subsection{High dimensional expansion} \label{sec:HDX}
For every non-trivial group $\Gamma$, 
     the $0^{\rm th}$ cohomology of $\cX$ with 
 $\Gamma$ coefficients vanishes if and only if $G(\cX)$ is connected. What does it mean for  the $1^{\rm st}$ cohomology to vanish? For permutation coefficients there is a nice characterization. Since $1$-cocycles are in correspondence with coverings of the base complex, and $1$-coboundaries are disjoint unions of the base complex, having a non-coboundry $1$-cocycle is the same as having a non-trivial finite covering of the base complex. 
 Since there is a correspondence between coverings of $\cX$ and subgroups of the fundamental group $\pi_1(\cX,*)$, having no non-trivial finite coverings is equivalent to $\pi_1(\cX,*)$ having no proper finite-index subgroups. 
In summary:
\begin{cor}\label{cor:first_cohomology_vanishes_equivalence}
    Given a polygonal complex $\cX$, the following three conditions are equivalent:
    \begin{enumerate}
        \item The first cohomology of $\cX$ with permutation coefficients vanishes.
         \item The fundamental group $\pi_1(\cX,*)$  has no finite index subgroups. 
        \item The profinite completion of its fundamental group is trivial, nemaly $\widehat{\pi_1(\cX,*)}=1$.
    \end{enumerate}
\end{cor}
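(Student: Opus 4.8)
The plan is to establish $(1)\Leftrightarrow(2)$ via the covering--cocycle dictionary recorded in Section~\ref{sec:prelim} (Claim~6.1 of \cite{CL_part1}), and then $(2)\Leftrightarrow(3)$ by a short, standard argument about profinite completions. Throughout we use that $\cX$ is connected, so that $G:=\pi_1(\cX,*)$ is well defined.

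First I would spell out $(1)\Leftrightarrow(2)$. Recall that, modulo the action \eqref{eq:action_0-coch_on_1-coch} of $0$-cochains, the $1$-cocycles $\alpha\colon\raE(\cX)\to\Sym(n)$ are in bijection with isomorphism classes of $n$-coverings of $\cX$, and that $B^1(\cX,\Sym(n))$ --- which is precisely the $0$-cochain orbit of the constant-identity $1$-cochain, since acting on that cochain by $\beta$ produces $\delta\beta$ --- corresponds to the totally split covering $\bigsqcup_{i=1}^{n}\cX$. Hence ``the first cohomology of $\cX$ with permutation coefficients vanishes'' says exactly that for every $n\geq 2$ each $n$-covering of $\cX$ is isomorphic to $\bigsqcup_{i=1}^{n}\cX$, i.e., using connectedness of $\cX$, that $\cX$ has no connected covering of degree $\geq 2$. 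By the Galois correspondence between pointed connected coverings of $\cX$ and subgroups of $G$ --- under which the degree of the covering equals the index of the subgroup --- this is exactly condition $(2)$, that $G$ has no proper subgroup of finite index.

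Next I would treat $(2)\Leftrightarrow(3)$. For $(2)\Rightarrow(3)$: if $G$ has no proper finite-index subgroup then it has no proper finite-index \emph{normal} subgroup, so all finite quotients of $G$ are trivial and the profinite completion $\widehat G$, being the inverse limit of these quotients, is trivial. For $(3)\Rightarrow(2)$ I would argue contrapositively: a proper subgroup $H\leq G$ with $[G:H]=n\geq 2$ yields, via the left-translation action of $G$ on $G/H$, a homomorphism $G\to\Sym(n)$ with nontrivial (indeed transitive) image; thus $G$ has a nontrivial finite quotient and $\widehat G\neq 1$.

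I do not expect a genuine obstacle here: the only substantive input, the covering--cocycle correspondence, is already available from Part~I, and everything else is bookkeeping combined with textbook covering theory and elementary facts about profinite completions. The one point deserving care is the identification, in the first step, of $B^1$ with the \emph{totally} split covering $\bigsqcup_{i=1}^{n}\cX$ rather than with a covering having fewer, larger components --- this is what upgrades ``every $1$-cocycle is a coboundary'' to ``$\cX$ has no connected covering of degree $\geq 2$'', and it follows at once from the remark that $B^1(\cX,\Sym(n))$ is exactly the $0$-cochain orbit of the constant-identity $1$-cochain.
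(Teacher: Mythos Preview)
Your proposal is correct and follows essentially the same route as the paper: the corollary is stated as a summary of the preceding paragraph, which argues $(1)\Leftrightarrow(2)$ via the cocycle--covering correspondence (cocycles $\leftrightarrow$ coverings, coboundaries $\leftrightarrow$ disjoint unions of $\cX$) together with the Galois correspondence between connected coverings and subgroups of $\pi_1(\cX,*)$, while $(2)\Leftrightarrow(3)$ is left implicit as standard. Your write-up is simply more explicit on two points the paper glosses over --- the identification of $B^1(\cX,\Sym(n))$ with the orbit of the constant-identity cochain (hence with the totally split covering), and the elementary $(2)\Leftrightarrow(3)$ step --- but these are elaborations of the same argument rather than a different one.
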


As oppose to the $0^{\rm th}$ dimensional case, where vanishing of the $0^{\rm th}$ cohomology, namely connectedness, is the most basic property you would expect from an object which presumes to \emph{expand}, this is not the case in  higher dimensions. Out of the various suggested high dimensional expansion properties (cf. \cite{lubotzky2018high}), coboundary expansion is considered a very strong notion. These days, there are many motivations to study coboundary expansion (cf. \cite{kaufman2014high}), but the original motivation to study it came --- as mentioned in our introduction ---  from  topological overlapping. Specifically, from the fact that coboundary expansion implies topological overlapping. Dotterrer--Kaufman--Wagner \cite{DKW_cosystolic_exp_implies_overlap} proved  that 
 a weakened version of coboundary expansion suffices to deduce topological overlapping (Theorem \ref{thm:intro_DKW}): The simplicial complex needs to  ---
 \begin{enumerate}
     \item Be a \textbf{cocycle} expander with $\FF_2$ coefficients.
     \item Have large cosystoles with $\FF_2$ coefficients.
 \end{enumerate}
We postpone addressing condition $(1)$ with permutation coefficients for the examples used by \cites{kaufman2016isoperimetric,evra2016bounded} to Section \ref{sec:cocyc_exp_lattices_sofic}.
In this section, we concentrate on condition $(2)$ with permutation coefficients.\footnote{In \cite{DKW_cosystolic_exp_implies_overlap} they called the above combination of properties \emph{cosystolic} expansion. We will not use this term. Note also that in \cites{Dinur-Meshulam,dikstein2023coboundary}  cosystolic expansion is what we call cocycle expansion, namely they do not require the large cosystoles condition.} The naive generalization of the large cosystoles condition to our setup, by asking every non-coboundary cocycle to have a large norm, cannot work: Let $\alpha\colon \raE(\cX)\to \Sym(n)$ be a non-coboundary cocycle. Note that we can define an embedding $\omega\colon \Sym(n)\to\Sym(N)$ for any $N\geq n$, such that $$\forall\sigma\in \Sym(n),\ i>n\ \ \colon \ \ \ \omega(\sigma).i=i.$$
Composing $\alpha$ with $\omega$, we get a new non-coboundary cocycle with norm which is $\frac{n}{N}$ smaller than $||\alpha||$. Since this works for any $N$, we cannot assume a unifrom lower bound on the norm of non-coboundary cocycles. Note that the cocycle $\omega\circ\alpha$ correspndes to a covering which is the disjoint union of $N-n$ copies of the underlying $\cX$ together with the covering associated with $\alpha$. In particular, it is disconnected. Nevertheless, the notion of large cosystoles can still be saved, by lower bounding the norms of \textbf{connected} non-coboundary cocycles. 
\begin{defn}
    A $1$-cochain $\alpha\colon \raE(\cX) \to \Sym(n)$ is \emph{connected} if it encodes a connected covering of the underlying graph of the base complex --- namely, for every two indices $i,j\in [n]$ and every two vertices $x,y\in V(\cX)$, there is a path $\pi$ in $\cX$ whose endpoints are $x$ and $y$ while $\alpha(\pi).i=j$.
\end{defn} 
\begin{defn}
    A complex $\cX$ has \emph{$\theta$-large cosystoles with permutation coefficients} if every non-coboundary connected cocycle $\alpha\colon \raE(\cX)\to \Sym(n)$ satisfies $\Vert\alpha\Vert\geq \theta$. I.e., $CoSyst_1(\cX,\Sym)\geq \theta$, where
    \begin{equation}\label{eq:cosyst_sym_coeff}
        CoSyst_1(\cX,\Sym)=\inf\{\Vert \alpha \Vert \mid \alpha\ \textrm{is\ \textbf{connected}},\ \alpha\in Z^1(\cX,\Sym)\setminus B^1(\cX,\Sym) \}.
    \end{equation}
\end{defn}

    Note that $CoSyst_1(\cX,\Sym)\geq \frac{1}{|\overrightarrow{E}(\cX)|}$. Given $\cY_i$  a family of growing coverings of $\cX$, the quantity $CoSyst_1(\cY_i,\Sym)$ may approach zero. An example to that is the coverings of a cycle. 
More generally, 
\begin{prop}\label{prop:amenable_0_cosyst}
    Let $\cX$ be a finite, connected polygonal complex with $\pi_1(\cX,*)$ a residually finite amenable group. Then, for every $\eps>0$, there is a covering $\cY$ of $\cX$ such that $CoSyst_1(\cY,\Sym)\leq \eps$. 
\end{prop}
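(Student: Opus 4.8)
The plan is to exploit the amenability of $\pi_1(\cX,*)$ via Følner sets to build a finite cover of $\cX$ on which there lives a connected, non-coboundary cocycle of small norm. Write $\Gamma=\pi_1(\cX,*)$, fix a presentation coming from $\cX$ with finite generating set $S$ (the edges outside a spanning tree, say), and let $\widetilde\cX$ be the universal cover, on which $\Gamma$ acts freely with quotient $\cX$. Since $\Gamma$ is amenable, for the Cayley-like geometry of $\cX$ there are Følner sets: finite subsets $F\subseteq \Gamma$ with $|\partial F|/|F|<\eps'$, where $\partial F$ is measured with respect to the finitely many ``moves'' corresponding to edges and to the boundary words of polygons of $\cX$.

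First I would use residual finiteness to replace the infinite Følner set by a finite-index subgroup. Concretely, given a Følner set $F$ there is a finite-index normal subgroup $N\trianglelefteq\Gamma$ such that the projection $\Gamma\to\Gamma/N$ is injective on a large ``core'' of $F$ (this is the standard trick: $N$ avoids finitely many nontrivial elements, namely the quotients $fg^{-1}$ for $f,g$ in the core and their $S$-neighbors). Let $\cY={}_{N}\backslash\widetilde\cX$ be the corresponding finite connected cover of $\cX$, with deck group $Q=\Gamma/N$; then $|Q|=n$ and $Q$ contains an ``almost Følner'' set, i.e.\ a subset $\bar F\subseteq Q$ of size $m$ with few boundary edges. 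Now define a $1$-cochain $\alpha\colon\raE(\cY)\to\Sym(n)$ — better, think of it on $\cX$ with values in $\Sym(Q)$ — by letting $\alpha$ be the restriction to $\raE(\cX)$ of the cocycle classifying the cover $\cY$ itself, which is a connected $\Sym(n)$-cocycle; call it $\alpha_0$. Since $\cY$ is a genuine connected cover, $\alpha_0\in Z^1(\cX,\Sym(n))\setminus B^1(\cX,\Sym(n))$, but $\Vert\alpha_0\Vert$ need not be small. To fix this, I would instead take the cocycle $\alpha$ which agrees with $\alpha_0$ but is "pinched to the identity off the Følner core": using Lemma~\ref{lem:identity_on_spanning_tree} to trivialize on a spanning tree, and then further modifying by a $0$-cochain $\beta$ chosen so that $\beta.\alpha_0$ moves points only when the corresponding deck-group translate crosses $\partial\bar F$. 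The resulting cochain has $\Vert\beta.\alpha_0\Vert \lesssim |\partial \bar F|/|Q|$ on edges, which is $O(\eps')$; it is still a cocycle (the action preserves cocycles, \eqref{eq:conjugation_by_0_cochain_on_closed_path}); it is still not a coboundary (being a coboundary is a property of the orbit, and $\alpha_0$ is not one); and it is still connected provided the Følner core is chosen large enough that the translates of $\bar F$ under $S$ still join up all of $[n]=Q$ — which holds because $\bar F$ can be taken to generate $Q$. Choosing $\eps'$ small enough that the edge-bound times $\max_i(\text{cell count})$ is below $\eps$ gives $CoSyst_1(\cY,\Sym)\leq\Vert\alpha\Vert\leq\eps$.

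The main obstacle — and the step I would spend the most care on — is the simultaneous control of two competing requirements on the modified cocycle: it must have small norm, yet remain \emph{connected}. Smallness wants the permutation values to be the identity on as many edges as possible (i.e.\ $\bar F$ close to all of $Q$ in the "interior" sense, boundary tiny), while connectivity of the covering wants the non-identity part to still mix the whole fiber $[n]$. Amenability is exactly what reconciles these: a Følner set simultaneously fills up almost all of the group (interior large) and has negligible boundary, and one can arrange it to be symmetric and generating. Making this quantitative — defining $\beta$ precisely, verifying that $\beta.\alpha_0$ is identity on the spanning tree plus all "interior" edges, checking the path-connectivity condition in the definition of connected cochain, and confirming the cocycle condition survives — is routine bookkeeping with Van Kampen / covering-space arguments once the Følner set is in hand, so I would not belabor it, but the choice of $\beta$ and the bound on how $\partial\bar F$ propagates through the (bounded) perimeters of polygons is where the argument actually lives.
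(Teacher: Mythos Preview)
Your construction has a location error: the object you describe as ``the cocycle classifying the cover $\cY$ itself'' is a $1$-cocycle on $\cX$ with values in $\Sym(Q)$, not one on $\cY$ (despite your initially writing $\alpha\colon\raE(\cY)\to\Sym(n)$). But the proposition asks for small $CoSyst_1(\cY,\Sym)$, which demands a connected non-coboundary cocycle living on $\cY$.

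Even reinterpreting your aim as bounding $CoSyst_1(\cX,\Sym)$ directly and then taking $\cY=\cX$, the mechanism you propose cannot work. After retracting to a presentation complex (single vertex), the $0$-cochain action on a $1$-cocycle is conjugation by a single permutation, and conjugation preserves the Hamming norm. Concretely: when $\cX$ is a circle ($\Gamma=\ZZ$), every connected cocycle on $\cX$ with values in $\Sym(n)$ is an $n$-cycle, which has norm $1$, and so does every $\beta.\alpha$; hence $CoSyst_1(\textrm{circle},\Sym)=1$, not $\leq\eps$. Your ``pinching to the identity off the F{\o}lner core'' is simply not achievable by the $0$-cochain action, and any modification that is not of this form destroys the cocycle condition.

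The paper's proof repairs this by introducing a \emph{second} cover. Using a theorem of Weiss for residually finite amenable groups, one finds a finite-index normal $N\trianglelefteq\Gamma$ together with a F{\o}lner transversal $A$ of $\Gamma/N$, and then (by residual finiteness again) a strictly smaller finite-index $N'\subsetneq N$. The cover $\cY$ corresponds to $N$; the relevant cocycle on $\cY$ classifies the further cover $\cZ$ corresponding to $N'$. In the presentation-complex picture, $A$ \emph{is} the vertex set of $\cY$, and its near-$S$-invariance says exactly that the levels $A\times\{b\}$ of $\cZ$ are crossed by few edges, so the associated cocycle has small norm. Connectivity of this cocycle is automatic (since $\cZ$ is connected), and non-triviality comes from $N'\neq N$; the tension you worry about between connectivity and small norm does not arise once the cocycle is placed on the correct complex.
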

To that end, we need the following Theorem of Weiss.
\begin{thm}[Theorem 1 in \cite{weiss2001monotileable}]\label{thm:Weiss}
    Let $\Gamma$ be a residually finite amenable group, and let $S$ be a finite generating set of $\Gamma$. Then, for every $\eps>0$, there exists a  normal subgroup $N\trianglelefteq \Gamma$ of finite index $n$ and elements $A=\{a_1,...,a_n\}$ such that 
    \begin{enumerate}
        \item $A$ is a transversal of $\Gamma/N$, i.e., $AN=\Gamma$.
        \item $A$ is $\eps$-invariant with respect to left multiplication by $S$, i.e.,
\[
|SA\setminus A|=|\{sa\mid s\in S,a\in A, sa\notin A\}|\leq \eps |A|.
\]
    \end{enumerate}

\end{thm}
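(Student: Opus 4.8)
The plan is to reduce the statement to a quasi-tiling fact about a suitable finite quotient, and then to obtain that quasi-tiling by transporting the Ornstein--Weiss machinery across a deep enough finite-index subgroup. For the reduction, fix $\eps>0$ and put $\eta=\eps/4$ and $\delta=\eps/(4|S|)$. I claim it suffices to produce: finitely many finite sets $F_1,\dots,F_m\subseteq\Gamma$ with $\sum_{s\in S}|sF_j\setminus F_j|\le\eta|F_j|$ for all $j$; a finite-index normal subgroup $N\trianglelefteq\Gamma$ on which $\pi:=\pi_N\colon\Gamma\to Q:=\Gamma/N$ is injective on each $F_j$; and a partition of $Q$ into right-translates of the images $\pi(F_j)$ together with a small remainder,
\[
Q \;=\; R\ \sqcup\ \bigsqcup_{\alpha}\pi(F_{j(\alpha)})\,g_\alpha ,\qquad |R|\le\delta\,[\Gamma:N].
\]
Given this data I would build $A$ as follows: for each tile $\pi(F_{j(\alpha)})g_\alpha$ choose any lift $\hat g_\alpha\in\pi^{-1}(g_\alpha)$ (then $\pi$ is injective on $F_{j(\alpha)}\hat g_\alpha$, with image that tile, since $\pi$ is injective on $F_{j(\alpha)}$), and for each coset in $R$ choose an arbitrary representative; let $A$ be the union of all these lifts. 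The tiles together with $R$ partition $Q$ and $\pi$ carries each chosen lift bijectively onto its tile, so $A$ meets every $N$-coset exactly once; hence $A$ is a transversal and $|A|=n:=[\Gamma:N]$. Writing $A=A_{\mathrm{tiles}}\sqcup A_R$ with $|A_R|\le\delta n$, observe that for $f\in F_{j(\alpha)}$ and $s\in S$ one has $s(f\hat g_\alpha)=(sf)\hat g_\alpha\in F_{j(\alpha)}\hat g_\alpha\subseteq A$ whenever $sf\in F_{j(\alpha)}$, so
\[
|SA\setminus A|\ \le\ \sum_{s\in S}|sA\setminus A|\ \le\ \sum_\alpha\sum_{s\in S}|sF_{j(\alpha)}\setminus F_{j(\alpha)}|\ +\ |S|\,|A_R|\ \le\ \eta\sum_\alpha|F_{j(\alpha)}|+|S|\delta n\ \le\ (\eta+|S|\delta)\,n\ <\ \eps|A|,
\]
using $\sum_\alpha|F_{j(\alpha)}|=|A_{\mathrm{tiles}}|\le n$. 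Thus $A$ is the desired $\eps$-invariant transversal.

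It remains to produce the data. Since $\Gamma$ is amenable, the Ornstein--Weiss $\eps$-quasi-tiling theorem supplies, for the above $\eta,\delta$, finitely many finite sets $F_1,\dots,F_m\subseteq\Gamma$ with $\sum_{s\in S}|sF_j\setminus F_j|\le\eta|F_j|$ such that the iterated maximal-packing procedure — pack a maximal family of pairwise-disjoint translates of $F_m$, fill what remains with a maximal disjoint family of translates of $F_{m-1}$, and so on down to $F_1$ — applied to any sufficiently invariant finite region leaves an uncovered remainder of relative measure at most $\delta$. Now, using residual finiteness, I would choose $N\trianglelefteq\Gamma$ of finite index so that $\pi_N$ is injective on a ball $B_r$ of radius $r=r(F_1,\dots,F_m)$ large enough to contain $F_mF_m^{-1}$, $\bigcup_j SF_j$, and every chain of translates that can occur during the procedure. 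Then the images $\pi_N(F_j)$ have the same cardinalities and the same invariance constants as the $F_j$, and — because $\pi_N$ restricts to a combinatorial isomorphism of $B_r$ onto its image — the packing procedure can be run verbatim inside the \emph{finite} group $Q$ with the shapes $\pi_N(F_j)$: every maximal packing is a genuine maximal packing (no translate wraps around $Q$ to overlap itself, by the choice of $r$) and the density estimates are local and hence unchanged. This yields the partition $Q=R\sqcup\bigsqcup_\alpha\pi_N(F_{j(\alpha)})g_\alpha$ with $|R|\le\delta[\Gamma:N]$, and the reduction closes the argument.

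The delicate point — and the real content of Weiss's theorem — will be exactly this last transport. The Ornstein--Weiss theorem is usually stated as a quasi-tiling of Følner \emph{subsets} of the infinite group $\Gamma$, whereas here one needs an honest quasi-tiling of the finite group $Q$ by translates of the \emph{fixed} $\Gamma$-shapes $F_j$; shapes intrinsic to $Q$ (e.g.\ $Q$ itself, or a subgroup) would be useless for the reduction, since the $F_j$ are what carry the Følner estimate. Making this rigorous means checking that, once $N$ is deep enough relative to the scales $F_1,\dots,F_m$, all the local ingredients of the packing argument survive in $Q$ and the global count of the uncovered remainder still comes out $\le\delta|Q|$. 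Balancing the depth of $N$ against the sizes of the shapes is the technical heart of the proof.
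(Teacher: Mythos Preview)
The paper does not give its own proof of this statement: it is quoted as Theorem~1 of \cite{weiss2001monotileable}, followed only by the remark that the formulation here differs slightly from Weiss's original. So there is no in-paper argument to compare against; your sketch is being weighed against Weiss's proof itself.

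Your outline is essentially Weiss's strategy. The reduction step --- lifting a quasi-tiling of $Q=\Gamma/N$ with small remainder to an almost-invariant transversal $A$ --- is clean and correct, and your boundary count $|SA\setminus A|\le(\eta+|S|\delta)n$ is right. You have also correctly located the real content: one needs the Ornstein--Weiss quasi-tiling to run inside the \emph{finite quotient} $Q$ with shapes that are images of Følner sets of $\Gamma$, not shapes intrinsic to $Q$. Two remarks on the transport. First, the ``sufficiently invariant region'' hypothesis of Ornstein--Weiss is trivially met, since $Q$ as a whole group is perfectly left-invariant; the only danger is that a translate $\pi(F_j)g$ collapses to fewer than $|F_j|$ points, and your injectivity requirement on $F_jF_j^{-1}$ rules that out. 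Second, your phrasing of the iterated packing as producing genuinely \emph{disjoint} tiles at every stage is slightly stronger than the usual Ornstein--Weiss output (which allows $\eps$-disjoint translates); either one can insist on the disjoint version from the start, or one trims the overlaps at the end and absorbs the loss into $R$ --- both work, but this should be said explicitly. With those points noted, your sketch is sound and identifies exactly where the work lies.
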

\begin{rem}
    This is not the formulation of Theorem 1 spelled out in \cite{weiss2001monotileable}, but, it is essentially what is proved therein.
\end{rem}
Moreover, we need the following characterization of the norm of a cocycle.
\begin{fact}\label{fact:levels_of_covering}
     Let $\alpha\colon \raE(\cX)\to \Sym(n)$ be a $1$-cocycle.  Let $\cY$ be the  covering of $\cX$ associated with $\alpha$ as in \eqref{eq:cocycle_to_covering}. Let $Level(i)=\{(x,i)\mid x\in V(\cX)\}\subseteq V(\cY)$ be the $i^{\rm th}$ \emph{level} of $\cY$, and let $\beta_i\colon V(\cY)\to \{0,1\}$ be its indicator function. Then
     \[
\|\alpha\|=\Ex_{\substack{e\sim \mu_1\\i\in[n]}} [\delta\beta_i(e,i)]
     \]
     Namely, $\|\alpha\|$ is the probability a sampled edge  has endpoints in different levels.\footnote{ This is an immediate consequence of the fact that $\delta\beta_i(e,i)$ indicates whether $\alpha(e).i\neq i$ or not.}
\end{fact}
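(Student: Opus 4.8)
The plan is a direct unwinding of the definitions, matching the two sides index-by-index and edge-by-edge. First, recall from \eqref{eq:norm_of_cochain} and \eqref{eq:permutation_normalized_Hamming_distance} that $\|\alpha\|=\Ex_{e\sim\mu_1}[d_h(\alpha(e),\Id)]$, and that since here $\alpha(e)\in\Sym(n)$ is being compared with $\Id\in\Sym(n)$ (so $N=n$ in \eqref{eq:permutation_normalized_Hamming_distance}),
\[
d_h(\alpha(e),\Id)=\frac{|\{i\in[n]\mid \alpha(e).i\neq i\}|}{n}=\frac1n\sum_{i=1}^{n}\mathbf{1}[\alpha(e).i\neq i].
\]
This quantity does not depend on the chosen orientation of the edge, since $d_h$ is inverse- and conjugation-invariant, so it is legitimate to sample $e$ from $\mu_1$ on oriented edges.

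Next, fix an oriented edge $x\xrightarrow{e}y$ of $\cX$ and an index $i\in[n]$, and read off the endpoints of the edge $(e,i)$ of $\cY$ from \eqref{eq:cocycle_to_covering}: its terminus is $\tau(e,i)=(y,i)$ and its origin is $\iota(e,i)=(x,\alpha(e).i)$. Thus the edge $(e,i)$ of $\cY$ joins $Level(\alpha(e).i)$ to $Level(i)$. Now view $\beta_i\colon V(\cY)\to\{0,1\}\cong\FF_2$ as an $\FF_2$-valued $0$-cochain; the coboundary formula gives $\delta\beta_i(e,i)=\beta_i(\iota(e,i))+\beta_i(\tau(e,i))=\beta_i(x,\alpha(e).i)+\beta_i(y,i)$. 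Since $\beta_i$ is the indicator of $Level(i)$, we have $\beta_i(y,i)=1$ always, while $\beta_i(x,\alpha(e).i)=1$ precisely when $\alpha(e).i=i$. Hence, in $\FF_2$,
\[
\delta\beta_i(e,i)=\mathbf{1}[\alpha(e).i=i]+1=\mathbf{1}[\alpha(e).i\neq i].
\]

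Finally, averaging over $i\in[n]$ and then over $e\sim\mu_1$ gives
\[
\Ex_{\substack{e\sim\mu_1\\ i\in[n]}}[\delta\beta_i(e,i)]=\Ex_{e\sim\mu_1}\!\left[\frac1n\sum_{i=1}^{n}\mathbf{1}[\alpha(e).i\neq i]\right]=\Ex_{e\sim\mu_1}[d_h(\alpha(e),\Id)]=\|\alpha\|,
\]
which is the asserted identity; the restatement in terms of ``endpoints in different levels'' is just the remark that $\delta\beta_i(e,i)=1$ exactly when the edge $(e,i)$ leaves $Level(i)$, i.e.\ when $\alpha(e).i\neq i$. There is no real obstacle here beyond bookkeeping: the only points requiring care are the left-action convention in \eqref{eq:cocycle_to_covering} (so that it is $\alpha(e).i$, not $\alpha(e)^{-1}.i$, that appears at the origin of $(e,i)$) and the fact that the measure on the oriented edges of $\cY$ implicit on the right-hand side is the natural pullback of $\mu_1$ (uniform over the index $i$). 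One may also note that the cocycle hypothesis on $\alpha$ plays no role in the computation itself — it is only needed for $\cY$ to be a covering of the complex $\cX$ and not merely of its $1$-skeleton.
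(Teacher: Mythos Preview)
Your proof is correct and is precisely the direct unwinding the paper has in mind: the paper does not give a separate proof but only the footnote remark that $\delta\beta_i(e,i)$ indicates whether $\alpha(e).i\neq i$, which is exactly the key identity you establish and then average. Your additional observations (that the cocycle hypothesis is irrelevant to the computation, and the care about the left-action convention) are accurate and harmless embellishments.
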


\begin{proof}[Proof of Proposition \ref{prop:amenable_0_cosyst}]
   Without loss of generality, we can assume $\cX$ is a presentation complex of some finite presentation $\Gamma\cong \langle S|R\rangle$, namely it has a single vertex $*$. This is because, if we  retract a spanning tree $T$, the coverings of $\cX$ are in one to one correspondence with coverings of $\cX/T$. Now given a finite covering $\cY$ of $\cX$, denote by $\cY/T$ the retraction of all the lifts of $T$ to $\cY$.  Every cocycle on $\cY/T$ can be extended to  a cocycle on $\cY$ by sending all the edges of the lifts of $T$ to the identity. This operation only reduces the norm of the cocycle on $\cY$ compared to that on $\cY/T$, and thus upper bounding $CoSyst_1(\cY/T,\Sym)$ provides an upper bound for $CoSyst_1(\cY,\Sym)$.\footnote{Compare this to the proof of Proposition \ref{prop:tau_implies_edge_expansion}, in which we need to work harder. This is because there is no natural inequality in the reverse direction.} Furthermore, we can assume $\mu_1=\mu_S$ is uniform, otherwise we replace the soon to be chosen $\eps$ by $\frac{\eps}{|S|}$ and proceed as we do.

   Let $\eps>0$. By assumption, $\Gamma\cong \pi_1(\cX,*)$ is  residually finite and amenable. 
   By Theorem \ref{thm:Weiss}, there exists  $N\trianglelefteq \Gamma\cong \pi_1(\cX,*)$ a normal subgroup of index $n<\infty$ and $A=\{a_1,...,a_n\}$ such that $AN=\Gamma$ and $|SA\setminus A|\leq \eps|A|$. 
   Therefore, for every $h\in \Gamma$, $|SAh\setminus Ah|\leq \eps|Ah|=\eps|A|$ --- namely, the right translates of $A$ are also $\eps$-invariant.
   Since $\Gamma$ is residually finite, there exists a finite index subgroup $N'\subsetneq N$, with $[N:N']=k$. Choose a transversal $B=\{b_1,...,b_k\}\subseteq N$ of $N'$. 
   
   The left action of $\Gamma$ on $\Gamma/N$ induces a covering $\cY$ of $\cX=\cX_{\langle S|R\rangle}$, by letting $V(\cY)=A\cong\Gamma/N$ and $a_i\xrightarrow{s} a_j$ if and only if $sa_jN=a_iN$. Similarly, the left action of $\Gamma$ on $\Gamma/N'$ induces a connected covering $\cZ$ of $\cX$ with vertex set $A\times B\cong AB\cong \Gamma/N'$, and with $a_ib_{i'}\xrightarrow{s}a_jb_{j'}$ if and only if $sa_jb_{j'}N'=a_ib_{i'}N'$. Since $N'\lneq N$, the projection  to the first coordinate $A \times B\to A$ is a covering map from $\cZ$ to $\cY$. Define, as in Fact \ref{fact:levels_of_covering}, $Level_\cZ(b)=A\times \{b\}=\{(a,b)\mid a\in A\}$. This choice of levels is the same as choosing a $1$-cocycle of $\cY$ that represents the covering $\cZ\to \cY$ (as in the end of Section \ref{sec:prelim}). One can also read this cocycle directly as follows:   
   Since $\raE(\cY)$ can be parametrized by $S\times A$, the associated cocycle  $\alpha\colon S\times A\to \Sym(B)$ is defined by sending the pair $(s,a_i)$ to the following permutation of $B$ ---  embed $B$ in $\{a_i\}\times B$; then $s$ sends $\{a_i\}\times B$ to $\{s^{-1}a_i\}\times B$; by projecting to the second coordinate, we can extract the permutation on $B$. Hence, the levels of $\cZ$ with respect to $\alpha$ are $Level_\cZ(b)$.

   Now, if the edge labeled by $s$ who terminates in $(a,b)\in \cZ$ and whose  origin is at  $(a',b')$ where $b'\neq b$, then in particular $sab\notin Ab$. But we know that $SAb\setminus Ab$ is of size at most $\eps |A|$, which means there are at most $\eps |A||S|$ such edges. All in all, at most $\eps$ of the edges cross levels,   and by Fact \ref{fact:levels_of_covering}, $\|\alpha\|\leq \eps$ finishing the proof.

\end{proof}

Groups with  property $(\tau)$ (cf. \cite{lubotzky1994discrete}), a soft version of Kazhdan's property (T), stand as a contrast to amenable groups in many cases . This is  the case here as well. The main result of the  rest of this  section is that the finite coverings of a complex with fundamental group having property $(\tau)$, have a uniform lower bound on their  cosystoles. This gives also a new way of deducing large cosystoles in dimension $1$ with $\FF_2$-coefficients, providing an alternative to the methods used in \cites{kaufman2016isoperimetric,evra2016bounded}, as well as simplifying any future approach towards Problem \ref{prob:6.6.}.

\subsection{Edge expansion of coverings implies large cosystoles}

\begin{prop}\label{prop:edge_exp_implies_large_cosyst}
    Let $\cX$ be a polygonal complex. Let $\gamma>0$. Assume that  every finite connected covering $\cY$ of $\cX$ is an edge expander, namely, $h_0(\cY,\FF_2)\geq \gamma$ for some $\gamma>0$. Then  $CoSyst_1(\cX,\Sym)\geq \frac{\gamma}{2}$.
\end{prop}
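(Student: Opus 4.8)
The plan is to take a connected non-coboundary cocycle $\alpha\colon \raE(\cX)\to \Sym(n)$ with small norm and derive a contradiction with the edge expansion hypothesis, by exhibiting a sparse vertex cut in the associated connected covering $\cY$. Let $\cY$ be the connected covering of $\cX$ encoded by $\alpha$ as in \eqref{eq:cocycle_to_covering}; since $\alpha$ is connected, $G(\cY)$ is connected, and since $\alpha \in Z^1\setminus B^1$, the covering $\cY$ is a genuine (connected) cover of $\cX$, hence in particular $n\geq 2$. The key object is the level structure: by Fact \ref{fact:levels_of_covering}, $\Vert\alpha\Vert = \Ex_{e\sim\mu_1,\,i\in[n]}[\delta\beta_i(e,i)]$, where $\beta_i$ is the indicator of $Level(i)$. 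In other words, averaging over $i\in[n]$, the quantity $\Vert\alpha\Vert$ is exactly the average (over $i$) of the $\mu_1$-fraction of oriented edges of $\cY$ with exactly one endpoint in $Level(i)$.

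First I would use Fact \ref{fact:levels_of_covering} together with an averaging argument to find a single index $i_0\in[n]$ for which the edge-boundary of $Level(i_0)$ in $\cY$ is small: concretely, $\Pro_{e\sim\mu_1(\cY)}[\,e\text{ has exactly one endpoint in }Level(i_0)\,]\leq 2\Vert\alpha\Vert$, where the factor $2$ accounts for passing from the oriented-edge count in Fact \ref{fact:levels_of_covering} to the measure on un-oriented edges of $\cY$ (or, depending on bookkeeping conventions, it simply absorbs the averaging slack). Set $S = Level(i_0)\subseteq V(\cY)$. Since each level is a copy of $V(\cX)$ under the covering map, $|S|$ (weighted by $\mu_0$) is exactly a $\tfrac1n$-fraction of the total, so $S$ is neither empty nor all of $V(\cY)$; in particular $0<\mu_0(\cY)(S)\leq \tfrac12$ (using $n\geq 2$), so $S$ is an admissible test set for the Cheeger constant $h_0(\cY,\FF_2)$.

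Next I would invoke the hypothesis: $h_0(\cY,\FF_2)\geq\gamma$, so the normalized edge-boundary of $S$ is at least $\gamma\cdot\mu_0(\cY)(S)$ — or, in whatever normalization $h_0$ is defined in Part I, at least $\gamma$ times the measure of the smaller side. Comparing this lower bound with the upper bound $2\Vert\alpha\Vert$ from the previous step forces $\Vert\alpha\Vert \geq \tfrac{\gamma}{2}$ (the measure-of-$S$ factors being handled by the normalization built into $h_0$). Since $\alpha$ was an arbitrary connected non-coboundary cocycle, $CoSyst_1(\cX,\Sym)\geq\tfrac\gamma2$.

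The main obstacle I anticipate is purely one of normalization: matching the measure $\mu_0$ on $\cY$ induced from the descending/ambient measures on $\cX$ with the measure used to define $h_0(\cY,\FF_2)$, and tracking the oriented-versus-unoriented factor of $2$ so that it lands exactly as $\gamma/2$ rather than $\gamma/4$ or $\gamma$. I would resolve this by being careful that (i) $Level(i)$ has $\mu_0$-measure exactly $1/n$ because the covering is balanced and $\mu_0$ on $\cX$ is a probability measure pulled back uniformly over fibers, and (ii) Fact \ref{fact:levels_of_covering} already expresses $\Vert\alpha\Vert$ as a probability over a uniformly random level and a $\mu_1$-random edge, so summing/averaging the single-level boundaries recovers $\Vert\alpha\Vert$ with no loss beyond the averaging step that selects $i_0$. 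A secondary, minor point is to confirm that $Level(i_0)$ is a proper nonempty subset — this is immediate from $n\geq 2$ and the fact that the levels partition $V(\cY)$ — so that it is a legal argument for the graph Cheeger constant.
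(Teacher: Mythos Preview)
Your approach is essentially the one in the paper: pass to the covering $\cY$ encoded by $\alpha$, use the level sets $Level(i)$ as cuts, and compare their edge-boundaries (controlled by $\|\alpha\|$ via Fact~\ref{fact:levels_of_covering}) against the Cheeger hypothesis. The only structural difference is the order of operations: you average first to select a single good level $i_0$ and then apply $h_0(\cY,\FF_2)\geq\gamma$ to that one cut, whereas the paper applies the Cheeger inequality to \emph{every} level $\beta_i$ (each has $d(\beta_i,Z^0)=1/n$) and only then averages over $i$. The paper's order is marginally cleaner because the factor of $2$ appears transparently: for a fixed $i$, the sum $\sum_k \Ex_{e}[\delta\beta_i(e,k)]$ equals $2\Pro_{e}[\alpha(e).i\neq i]$, since an oriented edge $e$ with $\alpha(e).i\neq i$ contributes once at $k=i$ and once at $k=\alpha(e)^{-1}.i$.

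One concrete correction to your bookkeeping: the bound you wrote, $\Pro_{e'\sim\mu_1(\cY)}[e'\text{ crosses }Level(i_0)]\leq 2\|\alpha\|$, is too weak to conclude. The correct bound is $\|\delta\beta_{i_0}\|\leq \tfrac{2}{n}\|\alpha\|$ (the $1/n$ coming from the lifted measure $\mu_1^\cY(e,k)=\tfrac{1}{n}\mu_1^\cX(e)$). This $1/n$ then cancels against $d(\beta_{i_0},Z^0)=1/n$ in the Cheeger ratio, yielding $\gamma\leq 2\|\alpha\|$ as desired. With that fix your argument goes through and matches the paper's constant exactly.
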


\begin{proof}
    We begin with the setup of Fact \ref{fact:levels_of_covering}. Let $\alpha\colon \raE(\cX)\to \Sym(n)$ be a  connected $1$-cocycle which is not a coboundary (namely, a non-coboundary connected coycle). Let $\cY$ be the  covering of $\cX$ associated with $\alpha$ as in \eqref{eq:cocycle_to_covering}, with the measures on cells being uniformly lifted to $\cY$. Let $Level(i)=\{(x,i)\mid x\in V(\cX)\}\subseteq V(\cY)$ be the $i^{\rm th}$ \emph{level} of $\cY$, and let $\beta_i\colon V(\cY)\to \FF_2$ be its indicator function. The measure induced on $\cY$ is $\mu_j(\sigma,i)=\frac{1}{n}\mu_j(\sigma)$, where  $j=0,1$ or $2$, $\sigma\in \cX(j)$ and $i\in[n]$. 
    
    The idea of the proof is very simple: as  Fact \ref{fact:levels_of_covering} entails, $\|\alpha\|$ is proportional to the number of edges going between different levels. If $\cY$ is an expander, there are many such edges. Let us be more formal.
    We have, 
    \[
    \begin{split}
        \gamma \leq \frac{\Vert \delta \beta_i\Vert}{\underbrace{d(\beta_i,Z^0(\cY,\FF_2))}_{=\nicefrac{1}{n}}}
        =n\cdot \Ex_{(e,k)\sim \mu_1}[\delta\beta_i(e,k)]
        =\sum_{k=1}^n\Ex_{e\sim\mu_1}[\delta\beta_i(e,k)]=(\diamondsuit).
    \end{split}
\]
Now, $\delta\beta_i(e,k)=1$ when one of $(e,k)$'s endpoints is in $Level(i)$ and the other is not. Since the endpoints of $(e,k)$ are $(\tau(e),k)$ and $(\iota(e),\alpha(e).k)$, the condition is $k=i$ and $\alpha(e).k\neq i$ or $k\neq i$ and $\alpha(e).k= i$. Since every orientation of every edge is counted that way, one may count twice those of the first kind. Therefore,
\[
\begin{split}
    (\diamondsuit)=2\Ex_{e\sim\mu_1}[\delta\beta_i(e,i)]=2\Pro_{e\sim\mu_1}[\alpha(e).i\neq i].
\end{split}
\]
  On the other hand,
  \[
\Vert \alpha \Vert =\Pro_{\substack{e\sim \mu_1\\ i\in [n]}}[\alpha(e).i\neq i].
  \]
  Thus, by taking $\Ex_{i\in [n]}$ on $(\diamondsuit)$, we deduce $\Vert \alpha \Vert\geq \frac{\gamma}{2}$.
\end{proof}

\begin{cor}\label{cor:covers_have_latge_cosyst}
    Let  $\cX$ be a polygonal complex. Let $\gamma>0$. Assume that  every finite connected covering $\cY$ of $\cX$ is an edge expander, namely, $h_0(\cY,\FF_2)\geq \gamma$ for some $\gamma>0$. Then  $CoSyst_1(\cZ,\Sym)\geq \frac{\gamma}{2}$ for every \textbf{covering} $\cZ$ of $\cX$.
\end{cor}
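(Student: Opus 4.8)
\textbf{Proof plan for Corollary \ref{cor:covers_have_latge_cosyst}.}

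The plan is to reduce the statement for an arbitrary covering $\cZ$ of $\cX$ to Proposition \ref{prop:edge_exp_implies_large_cosyst} applied to $\cZ$ itself. The key observation is that the hypothesis --- every finite connected covering of $\cX$ is an edge expander with constant $\gamma$ --- is inherited by $\cZ$: if $\cW$ is a finite connected covering of $\cZ$, then $\cW$ is also a finite connected covering of $\cX$ (composition of covering maps is a covering map, and connectedness is preserved), so $h_0(\cW,\FF_2)\geq \gamma$. Hence $\cZ$ satisfies exactly the hypothesis of Proposition \ref{prop:edge_exp_implies_large_cosyst}, and we conclude $CoSyst_1(\cZ,\Sym)\geq \frac{\gamma}{2}$.

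The one point that needs a little care is the case where $\cZ$ is an \emph{infinite} covering of $\cX$, since Proposition \ref{prop:edge_exp_implies_large_cosyst} as stated is about polygonal complexes without a finiteness assumption but its proof invokes Fact \ref{fact:levels_of_covering} for finite-degree coverings of $\cZ$. First I would note that $CoSyst_1$ is defined via \emph{connected finite-degree} cocycles $\alpha\colon \raE(\cZ)\to \Sym(n)$ with $n<\infty$; such an $\alpha$ corresponds to a finite-degree covering $\cW\to \cZ$, and the argument in the proof of Proposition \ref{prop:edge_exp_implies_large_cosyst} only ever uses the edge expansion of this finite covering $\cW$ of $\cX$ (through the composition $\cW\to \cZ\to \cX$), together with the level indicator functions $\beta_i$ on $V(\cW)$. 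If $\cW$ itself is disconnected, then since $\alpha$ is a connected cocycle $\cW$ must in fact be connected, so the expansion hypothesis does apply. Running the identical computation --- $\gamma\leq n\cdot\Ex_{(e,k)\sim\mu_1}[\delta\beta_i(e,k)] = 2\Pro_{e\sim\mu_1}[\alpha(e).i\neq i]$, then averaging over $i\in[n]$ --- yields $\Vert\alpha\Vert\geq\frac{\gamma}{2}$, which is what we want.

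I do not expect a genuine obstacle here; the corollary is essentially the observation that the hypothesis of Proposition \ref{prop:edge_exp_implies_large_cosyst} is a ``hereditary'' property under passing to coverings. The only thing to double-check is the bookkeeping that the measures lifted uniformly from $\cX$ to $\cZ$ and then to $\cW$ agree with the measures one gets by lifting directly from $\cX$ to $\cW$, so that ``$h_0(\cW,\FF_2)\geq\gamma$'' is being used with the correct normalization --- but this is immediate from the definition $\mu_j(\sigma,i)=\frac1n\mu_j(\sigma)$ and the fact that composition of such uniform lifts is again a uniform lift.
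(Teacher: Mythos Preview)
Your proposal is correct and follows exactly the paper's approach: the paper's proof is the one-line observation that every covering of $\cZ$ is a covering of $\cX$, so $\cZ$ inherits the hypothesis of Proposition~\ref{prop:edge_exp_implies_large_cosyst} and that proposition applies directly. Your additional remarks about the infinite-covering case and the compatibility of lifted measures are more careful than the paper, which simply leaves these points implicit.
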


\begin{proof}
    Since every covering of $\cZ$ is a covering of $\cX$, if the conditions of Proposition \ref{prop:edge_exp_implies_large_cosyst} are satisfied by $\cX$, then they are satisfied by $\cZ$.
\end{proof}


As we recalled in the beginning of Section \ref{sec:HDX}, the connected coverings of $\cX$ are in one to one correspondence with subgroups of the fundamental group $\pi_1(\cX,*)$. We will move now to show that if  $\pi_1(\cX,*)$ has property $(\tau)$, then all its finite coverings are edge expanding. 


\subsection{Property $(\tau)$ and edge expansion of coverings}\label{sec:prop_tau_and_edge_exp_of_coverings}

Let $\Gamma$ be a finitely generated group, and $S$ a finite generating set of $\Gamma$. Let $\mu$ be a probability distribution on $S$, viewed as a an element of the group ring $\mathbb{C}[\Gamma]$. Since every unitary representation $\rho$ of $\Gamma$ which acts on a Hilbert space $\mathbb{H}$ can be extended to the group ring,  we can define the operator $\rho(\mu)\in B(\mathbb{H})$. Namely, $\rho(\mu)=\sum_{s\in S} \mu(s)\rho(s)$. If there exists $\gamma>0$ such that, regardless of the chosen unitary representation $\rho$, the spectrum of $\rho(\mu)$ is contained in $[-1,1-\gamma]\cup\{1\}$, we say that $\mu$ has spectral gap $\gamma$. 
If this happens, then $\Gamma$ has Kazhdan's property (T), in which case  a spectral gap exists for any measure with a finite generating support. If the same is true only for finite dimensional representations   which factor through finite quotients of $\Gamma$, we say that $\Gamma$ has property $(\tau)$.

\begin{prop}\label{prop:tau_implies_edge_expansion}
    Let $\cX$ be a connected polygonal complex. If $\pi_1(\cX,*)$ has $(\tau)$, then there is a $\gamma=\gamma(\cX)>0$ such that for every connected covering $\cY$ of $\cX$ we have $h_0(\cY,\FF_2)\geq \gamma$.
\end{prop}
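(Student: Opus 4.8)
The plan is to translate the statement about $h_0(\cY,\FF_2)$ into a statement about a spectral gap of a random-walk-type operator on the $1$-skeleton $G(\cY)$, and then recognize that operator as $\rho(\mu)$ for a representation of $\pi_1(\cX,*)$ that factors through a finite quotient, so that property $(\tau)$ applies uniformly.

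First I would set up the dictionary between connected coverings and Schreier graphs. Since $\cY\to\cX$ is a connected covering of degree $n$, the $1$-skeleton $G(\cY)$ is (after retracting a lift of a spanning tree of $G(\cX)$, which only affects constants) the Schreier graph of $\pi_1(\cX,*)$ acting on the coset space $\pi_1(\cX,*)/H$ for the finite-index subgroup $H$ corresponding to $\cY$, with respect to the generating set $S=E(\cX/T)$. Let $\mu$ be the measure on $S\cup S^{-1}$ induced by $\mu_1$ (symmetrized). The combinatorial edge expansion $h_0(\cY,\FF_2)$ is, up to a factor depending only on the degrees in $G(\cX)$ (which are bounded since $\cX$ is finite), the Cheeger constant of this weighted graph. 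By the discrete Cheeger inequality (Alon--Milman / Dodziuk), the Cheeger constant is bounded below by a constant times the spectral gap of the averaging operator $M_{\cY}$ on $\ell^2(V(\cY))$ given by $M_\cY = \rho_{\cY}(\mu)$, where $\rho_\cY$ is the permutation (quasi-regular) representation of $\pi_1(\cX,*)$ on $\ell^2(\pi_1(\cX,*)/H)$.

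The key point is then uniformity. The representation $\rho_\cY$ factors through the finite quotient $\pi_1(\cX,*)/\mathrm{Core}(H)$, so it is exactly the kind of representation property $(\tau)$ controls. The constant function is the unique (up to scalar) invariant vector since $\cY$ is connected; on its orthogonal complement $\rho_\cY$ has no almost-invariant vectors, uniformly over all finite-index $H$, because $\pi_1(\cX,*)$ has $(\tau)$ with respect to $S$. Concretely, property $(\tau)$ gives a single $\gamma_0 = \gamma_0(\pi_1(\cX,*),\mu) > 0$ such that the spectrum of $\rho(\mu)$ restricted to the complement of the invariants lies in $[-1, 1-\gamma_0]$ for every representation factoring through a finite quotient; applying this to $\rho_\cY$ gives a spectral gap $\gamma_0$ for $M_\cY$ independent of $\cY$. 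Feeding this into the Cheeger inequality and the bounded-degree conversion yields $h_0(\cY,\FF_2)\ge \gamma$ for a $\gamma = \gamma(\cX) > 0$.

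The main obstacle is bookkeeping rather than mathematics: one must be careful that (a) retracting a spanning tree genuinely preserves the relevant quantities up to constants (this is where finiteness/bounded degree of $\cX$ enters, and it is the place the paper's footnote warns "we need to work harder" compared to the amenable case — though here the inequality goes the convenient direction), (b) the definition of $h_0(\cdot,\FF_2)$ via the normalized norm $d(\beta,Z^0)$ matches the classical combinatorial/weighted Cheeger constant up to the degree factors, and (c) the negative part of the spectrum of $\rho(\mu)$ is also bounded away from $-1$, which one can either extract from the $(\tau)$ hypothesis directly (spectrum in $[-1,1-\gamma]\cup\{1\}$ as stated allows eigenvalues near $-1$) or circumvent by passing to the lazy walk $\tfrac12(I+M_\cY)$ before applying Cheeger, absorbing the extra factor into $\gamma$. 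None of these steps is deep, but they need to be assembled in the right order.
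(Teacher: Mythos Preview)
Your outline follows the same route as the paper's proof sketch: pass to a spanning tree $T$, identify the retracted covering as a Schreier graph of $\pi_1(\cX,*)$, extract a uniform spectral gap from $(\tau)$, and convert via Cheeger. The gap is in your step (a), and you have the direction of the inequality backwards. Retracting the lifted trees $T_1,\dots,T_n$ in $\cY$ to points yields no control in the direction you need: the paper's own example makes this vivid --- take $\cY=\cX$, so the retracted graph $\cZ$ is a single vertex and hence an optimal expander, while $h_0(\cX,\FF_2)$ may be arbitrarily small. This is exactly why the footnote you quote says one must ``work harder'' here: the easy inequality relates $h_0(\cZ)$ and $h_0(\cY)$ the wrong way around, contrary to your parenthetical claim.

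What the paper does to close this is not bookkeeping but the actual content of the argument: it views $\cY$ as (essentially) a replacement product of $\cZ$ with the fixed tree $T$, and runs a two-scale spectral decomposition. A function $\beta$ on $V(\cY)$ orthogonal to the constants is split into its cloud-wise averages (a function on $V(\cZ)$, on which the Schreier adjacency acts with the $(\tau)$-gap $\gamma_0$) plus a cloud-wise mean-zero remainder (controlled by the fixed expansion of $T$, which depends only on $\cX$). Combining the two pieces gives a spectral gap, hence an edge-expansion bound, for $\cY$ that depends only on $\cX$. Your observation (c) about the bottom of the spectrum is a legitimate side issue and your lazy-walk fix is fine; the paper simply glosses over it.
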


\begin{proof} [Proof sketch]
This proposition depends only on the $1$-dimensional skeleton of the complex, both in its statement and in the following proof. This is a generalization of the well known result (cf.\ \cite{lubotzky1994discrete}) that the Schreier graphs of groups with property $(\tau)$ are expanders. This exactly corresponds to the case where $\cX$ is a presentation complex, namely has a single vertex.

  Assume $*$ is a vertex of $G(\cX)$.   Given a spanning tree $T$ of $G(\cX)$, we can choose for every edge  $xy\in \raE(\cX)$ outside of $T$ an element in $\pi_1(\cX,*)$ as follows: Walk from $*$ to $x$ in $T$. Then traverse $xy$. Then walk from $y$ back to $*$. This is a closed path in $G(\cX)$ which originates at $*$ and hence it is an element of the fundamental group. Moreover, this is a generating set, and we  denote it by $S$. Take the uniform measure on $S$. By property $(\tau)$, there exists $\gamma_0>0$ such that the spectral gap of the operator $\Ex_{s\in S}[\rho(s)]$ for all finite dimensional unitary representations of $\pi_1(\cX,*)$ is at least $\gamma_0$.

      Let $\cY$ be a connected $n$-covering of $\cX$. Let $T_1,...,T_n$ be the lifts of the tree $T$ to $\cY$. We can retract the $T_i$'s in $\cY$ to points, resulting in a graph $\cZ$. Then, $\cZ$ is a Schreier graph of $\pi_1(\cX,*)$ with respect to the generating set $S$. Hence, the adjacency operator on $\cZ$ is of the form $\Ex_{s\in S}[\rho(s)]$, where $\rho$ is the action  of the fundamental group on the vertices of its Schreier graph. Thus, the adjacency operator on $\cZ$ has a spectral gap of at least $\gamma_0$. By the Cheeger inequalities, $\cZ$ is a good edge expander. 

      Now, $\cZ$ can be a great expander while $\cY$ is not. As an example, take $\cY=\cX$ and note that $\cZ$ has a single vertex and thus is an optimal expander, while $\cX$ may be as bad an expander as you would like. But, this is the \emph{only obstruction}. Namely, since we fix $\cX$ and look at coverings of it, there is a way of bounding the spectral gap of the covering $\cY$ by those of the retracted graph $\cZ$ and the base graph $\cX$ (or at least that of the tree $T$ that we chose). To see that, note that $\cY$ is essentially the replacement product of $\cZ$ with $T$. In this viewpoint, the trees $T_i$ are analogous to the clouds in the replacement product. Hence, given any function $\beta\colon V(\cY)\to \mathbb{C}$ which is perpendicular to the constant functions, we can average it out along the clouds. On the averaged function the adjacency of $\cY$ behaves similar to that of $\cZ$. And on the rest, we need to use the expansion of the tree $T$ (which again may be very poor, but it is a constant for all the coverings). From this argument we can deduce a spectral gap, which in turn bounds on the edge expansion  by (some variant of) the Cheeger inequalities. 

      Note that we totally ignored the weighting systems on the edges and vertices. The ideas we sketched still work, but we lose even more by the required change of weighting between $T$, $\cZ$ and $\cY$.
\end{proof}

  Proposition \ref{prop:tau_implies_edge_expansion} and Corollary \ref{cor:covers_have_latge_cosyst} prove that given a fixed complex with a property $(\tau)$ fundamental group, the family of all its covers have a uniform lower bound on their cosystoles ---  with coefficients in permutations, and hence, in particular, also for $\FF_2$-coefficients. This proves clause $(2)$ of Theorem \ref{thm:intro_exp_of_coverings_implies_cosystols}. Since there are many groups with property $(\tau)$, this provides a plethora of examples. In particular, this applies to all finite sheeted coverings of a complex whose fundamental group is a lattice in high rank simple Lie groups.

\subsection{Local to Global expansion}\label{sec:local_global_exp}
Since the bound in Proposition \ref{prop:tau_implies_edge_expansion} was presented in a qualitative manner --- we intentionally excluded  calculations from the proof sketch we provided --- we opt for a different approach which yields  quantitative results. This approach requires one to assume more about the complex in hand than just having a fundamental group with property $(\tau)$. It needs to be a \emph{local spectral expander}, which in turn implies property (T), and thus $(\tau)$, of the fundamental group (this is Garland's method \cite{garland1973p}, see also \cites{ollivier2003spectral,ZukPropT,ballmann1997l2}).  
In most applications, and specifically in the Ramanujan complexes of Lubotzky--Samuels--Vishne \cite{LSV2005ramanujan} which were used in \cites{kaufman2016isoperimetric,evra2016bounded}, this condition holds. Moreover, when studying a fixed simple $p$-adic Lie group $G$ with an associated Bruhat--Tits building $\cB$, this method provides a lower bound on the cosystoles of all quotients of $\cB$ uniformly, independent of the acting lattice.
To be able to present the local spectral expansion condition, we need to recall some spectral definitions of expansion. We follow  \cite{Dikstein_lecture_notes_trickling_down} in our presentation --- See also Appendix A of \cite{harsha2022note}, and the original paper by Oppenheim for the  case where the distribution on highest dimensional cells is uniform \cite{oppenheim2018local}. 

Let $\cX$ be a connected $2$-dimensional \textbf{simplicial} complex. Assume $\mu_2$ is a fully supported distribution on the triangles of $\cX$. Furthermore, assume $\mu_1$ and $\mu_0$ are descendent from $\mu_2$, and are fully supported. This implies in particular that the complex $\cX$ is \emph{pure}: Every edge (and vertex) participate in some triangle. The link of a cell  $\sigma\in \cX$ is the simplicial complex
\[
\cX_\sigma=\{\tau\in \cX\mid \tau\cup \sigma\in \cX, \tau \cap \sigma=\emptyset\}.
\]
In the two dimensional case, the only links we will focus on are vertex links, which are graphs. The links inherit a distribution on their edges as follows: For every vertex $\sigma$ and edge $e\in \cX_\sigma$, we have $\mu_{\sigma,1}(e)=\frac{\mu_2(\sigma\cup e)}{\sum_{e'\in \cX_\sigma(1)}\mu_2(\sigma \cup e')}$. The measure on vertices of $\cX_\sigma$ descends from $\mu_{\sigma,1}$. Given a simplical complex $\cX$, we can define an adjacency operator $A$ on the complex valued functions on its vertices: Given $f\colon \cX(0)\to \mathbb{C}$, $$Af(v)=\Ex_{u\sim \mu_{v,0}}[f(u)],$$
namely, we sample an edge $vu$ according to $\mu_1$ conditioned on the edge containing $v$. Furthermore, we choose the following inner product on $\mathbb{C}^{\cX(0)}$:
\[
\langle f,g\rangle=\Ex_{v\sim \mu_0}[\overline{f(v)}g(v)].
\]
The adjacency operator $A$ has the following properties:
\begin{itemize}
    \item The constant functions are eigenfunctions of it with eigenvalue $1$.
    \item It is self adjoint with respect to the chosen inner product.
\end{itemize}
We say that $\cX$ is a $\lambda$-spectral expander if the second eigenvalue of $A$ is bounded by $\lambda$ from above. We say that $\cX$ is a $\lambda$-local spectral expander if the links $\cX_v$ are $\lambda$-spectral expanders for all $v\in V(\cX)$.
\begin{thm}[Oppenheim's trickling down theorem, \cites{Dikstein_lecture_notes_trickling_down,oppenheim2018local}]\label{thm:trickle}
   Let $\cX$ be a pure, connected, $2$-dimensional simplicial complex with fully supported descendent measures on its cells.  Assume also that $\cX$ is a $\lambda$-local spectral expander for some $\lambda<\frac{1}{2}$ --- namely, that for each vertex $v\in V(\cX)$, the link $\cX_v$ is a $\lambda$-spectral expander. Then $\cX$ is a $\frac{\lambda
}{1-\lambda}$-spectral expander.
\end{thm}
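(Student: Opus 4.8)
The plan is to prove this by \emph{localizing} the global quadratic form of the adjacency operator to the vertex links, using crucially that the measures are descending. Write $A=A_\cX$ for the adjacency operator on $\CC^{\cX(0)}$ with the inner product $\langle f,g\rangle=\Ex_{v\sim\mu_0}[\overline{f(v)}g(v)]$, and for a vertex $v$ write $A_{\cX_v}$ for the adjacency operator of the link graph $\cX_v$ with its inherited inner product, and $f|_{\cX_v}$ for the restriction of $f$ to the neighbours of $v$. The first step is to establish, for every $f\colon\cX(0)\to\CC$, the two ``localization'' identities
\[
\langle f,Af\rangle=\Ex_{v\sim\mu_0}\bigl[\langle f|_{\cX_v},A_{\cX_v}f|_{\cX_v}\rangle_{\cX_v}\bigr]
\qquad\text{and}\qquad
\langle f,f\rangle=\Ex_{v\sim\mu_0}\bigl[\langle f|_{\cX_v},f|_{\cX_v}\rangle_{\cX_v}\bigr].
\]
Both are pure bookkeeping with the descending measures: sampling $v\sim\mu_0$ and then an edge of $\cX_v$ under $\mu_{v,1}$ is the same as sampling a triangle $T\sim\mu_2$ together with a uniform vertex $v\in T$, so each side of the first identity equals $\Ex_{T\sim\mu_2}\Ex_{v\in T}[\mathrm{Re}(\overline{f(u)}f(w))]$ where $\{u,w\}=T\setminus\{v\}$; the second reduces to $\Ex_{v\sim\mu_0}\Ex_{u\sim\mu_{v,0}}[|f(u)|^2]=\Ex_{u\sim\mu_0}[|f(u)|^2]$.

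Next I would work one link at a time. On $\cX_v$ split $f|_{\cX_v}=c_v\,\mathbf{1}_{\cX_v}+g_v$ with $g_v$ orthogonal to the constants in $\langle\cdot,\cdot\rangle_{\cX_v}$; the useful observation is that $c_v=\Ex_{u\sim\mu_{v,0}}[f(u)]=(Af)(v)$. Since $\mathbf{1}_{\cX_v}$ is a unit eigenvector of the self-adjoint $A_{\cX_v}$ with eigenvalue $1$, orthogonality gives $\langle f|_{\cX_v},f|_{\cX_v}\rangle_{\cX_v}=|(Af)(v)|^2+\|g_v\|_{\cX_v}^2$ and $\langle f|_{\cX_v},A_{\cX_v}f|_{\cX_v}\rangle_{\cX_v}=|(Af)(v)|^2+\langle g_v,A_{\cX_v}g_v\rangle_{\cX_v}$, while the $\lambda$-spectral expansion of $\cX_v$ forces $\langle g_v,A_{\cX_v}g_v\rangle_{\cX_v}\le\lambda\|g_v\|_{\cX_v}^2$ because $g_v$ is orthogonal to $\mathbf{1}_{\cX_v}$. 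Averaging over $v\sim\mu_0$ and inserting the localization identities (with $\Ex_v[|(Af)(v)|^2]=\|Af\|^2$ and $\Ex_v[\|g_v\|_{\cX_v}^2]=\|f\|^2-\|Af\|^2$) yields the key inequality
\[
\langle f,Af\rangle\le(1-\lambda)\,\|Af\|^2+\lambda\,\|f\|^2\qquad\text{for every }f\colon\cX(0)\to\CC.
\]

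Finally I would feed into this an eigenvector $f$ of $A$ for its second largest eigenvalue $\mu_2$; connectedness of $\cX$ (hence of $G(\cX)$) makes $\mathbf{1}$ the unique top eigenvector, so $f\perp\mathbf{1}$ and $\mu_2<1$. The key inequality becomes $\mu_2\le(1-\lambda)\mu_2^2+\lambda$, i.e.\ $(1-\lambda)(\mu_2-1)\bigl(\mu_2-\tfrac{\lambda}{1-\lambda}\bigr)\ge0$; since $1-\lambda>0$ and $\mu_2<1$ this forces $\mu_2\le\frac{\lambda}{1-\lambda}$, which is exactly the assertion — and a non-vacuous bound precisely because $\lambda<\tfrac12$. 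I expect the only genuinely fiddly part to be the first step: making sure the descending-measure identities come out with no stray normalizing constants; everything after that is the short spectral computation above. (One may avoid choosing an eigenvector: applying the key inequality to the eigenvectors of $A$ one at a time shows every eigenvalue other than $1$ lies in $(-\infty,\tfrac{\lambda}{1-\lambda}]\cup[1,\infty)$, and connectedness rules out the right-hand interval.)
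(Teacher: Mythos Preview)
Your argument is correct and is essentially the standard proof of Oppenheim's trickling down theorem: localize the quadratic form $\langle f,Af\rangle$ to the links via the descending measures, decompose on each link into the constant part $c_v=(Af)(v)$ plus an orthogonal remainder, apply the link spectral gap, and then solve the resulting quadratic inequality $(1-\lambda)\mu^2-\mu+\lambda\ge 0$ for any eigenvalue $\mu<1$.

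Note, however, that the paper does \emph{not} give its own proof of this statement: Theorem~\ref{thm:trickle} is simply quoted from the cited references \cite{Dikstein_lecture_notes_trickling_down,oppenheim2018local} and used as a black box. So there is no ``paper's proof'' to compare against; what you have written is essentially the proof one finds in those sources. Your caution about the bookkeeping in the localization identities is well placed --- the only subtlety is checking that sampling $v\sim\mu_0$ and then an edge of $\cX_v$ from $\mu_{v,1}$ is exactly the same as sampling $T\sim\mu_2$ and a uniformly chosen vertex $v\in T$, and similarly that $\Ex_{v\sim\mu_0}\Ex_{u\sim\mu_{v,0}}[\,\cdot\,]=\Ex_{u\sim\mu_0}[\,\cdot\,]$; both hold with the descending measures as defined here, with no stray constants.
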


\begin{prop}[Weighted Cheeger inequalities: The lower bound]\label{prop:weighted_Cheegr_lower_bound}
    If $\cX$ is a $\lambda$-spectral expander, then $h_0(\cX,\FF_2)\geq 1-\lambda$.
\end{prop}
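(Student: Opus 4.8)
The plan is to reduce the statement to the classical ``easy direction'' of the Cheeger inequality for the reversible random walk underlying $\cX$, after unwinding the definition of $h_0(\cX,\FF_2)$.

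First I would record what $h_0(\cX,\FF_2)$ really is. A $0$-cochain $\alpha\in C^0(\cX,\FF_2)$ is the indicator $\mathbf{1}_S$ of a subset $S\subseteq V(\cX)$, and since $\cX$ is connected $Z^0(\cX,\FF_2)$ consists of the two constant cochains, so $d(\alpha,Z^0(\cX,\FF_2))=\min(\mu_0(S),1-\mu_0(S))$, while $\Vert\delta\alpha\Vert=\Pro_{e\sim\mu_1}[e\text{ has exactly one endpoint in }S]=\mu_1(\partial S)$, the edge-boundary measure. Hence
\[
h_0(\cX,\FF_2)=\inf\left\{\frac{\mu_1(\partial S)}{\min(\mu_0(S),1-\mu_0(S))}\ \middle|\ \emptyset\neq S\subsetneq V(\cX)\right\},
\]
and it suffices to prove $\mu_1(\partial S)\geq(1-\lambda)\min(\mu_0(S),1-\mu_0(S))$ for every such $S$.

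Next comes the Dirichlet form computation. Write $L=I-A$. Using that $A$ is self-adjoint with respect to $\langle f,g\rangle=\Ex_{v\sim\mu_0}[\overline{f(v)}g(v)]$ --- equivalently, that the one-step distribution $\nu(v,u)=\mu_0(v)\mu_{v,0}(u)$ on ordered edges is symmetric with both marginals equal to $\mu_0$ (this is where the ``descending'' hypothesis on the measures is used) --- one obtains the standard identity $\langle f,Lf\rangle=\tfrac12\Ex_{(v,u)\sim\nu}[\,|f(v)-f(u)|^2\,]$. Plugging in $f=\mathbf{1}_S$, and noting that $\nu$ samples each unoriented edge in its two orientations with total mass $\mu_1$, gives $\langle \mathbf{1}_S,L\mathbf{1}_S\rangle=\tfrac12\mu_1(\partial S)$. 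On the other hand, decompose $\mathbf{1}_S=\mu_0(S)\cdot\mathbf{1}+f_0$ with $f_0\perp\mathbf{1}$; since $L\mathbf{1}=0$ and the second eigenvalue of $A$ is at most $\lambda$, we get $\langle \mathbf{1}_S,L\mathbf{1}_S\rangle=\langle f_0,Lf_0\rangle\geq(1-\lambda)\Vert f_0\Vert^2=(1-\lambda)\,\mu_0(S)(1-\mu_0(S))$.

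Combining, $\mu_1(\partial S)\geq 2(1-\lambda)\,\mu_0(S)(1-\mu_0(S))$, and since $\max(\mu_0(S),1-\mu_0(S))\geq\tfrac12$ we have $\mu_0(S)(1-\mu_0(S))\geq\tfrac12\min(\mu_0(S),1-\mu_0(S))$, whence $\mu_1(\partial S)\geq(1-\lambda)\min(\mu_0(S),1-\mu_0(S))$ and the proposition follows. The only genuinely delicate point is the bookkeeping of normalizations --- in particular making the two factors of $\tfrac12$ (one from the Dirichlet form, one from $\mu_0(S)(1-\mu_0(S))$ versus the $\min$) cancel correctly --- together with the verification that the stated self-adjointness of $A$ (hence reversibility of $\nu$) is exactly what is needed to identify $\langle \mathbf{1}_S,L\mathbf{1}_S\rangle$ with $\tfrac12\mu_1(\partial S)$; everything else is routine linear algebra.
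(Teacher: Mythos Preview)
Your proof is correct and is essentially the same argument as the paper's: both compute the Rayleigh quotient of the centered indicator of a set (your $f_0=\mathbf{1}_S-\mu_0(S)\mathbf{1}$ is exactly the function $f$ the paper works with), bound it by $\lambda\|f_0\|^2=\lambda\,\mu_0(S)(1-\mu_0(S))$, and then use $\mu_0(S)(1-\mu_0(S))\geq\tfrac12\min(\mu_0(S),1-\mu_0(S))$. The only difference is cosmetic --- you package the computation via the Dirichlet form of $L=I-A$, while the paper expands $\langle f,Af\rangle$ directly --- and your bookkeeping of the two factors of $\tfrac12$ is correct.
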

Though the proof method of  Proposition \ref{prop:weighted_Cheegr_lower_bound} is standard,  we did not find a source which proves it in this generality, and decided to indclude it in Appendix \ref{appendix:weighted_Cheeger} for completeness. 
\begin{cor}\label{cor:local_exp_implies_exp_of_coverings}
    Let $\cX$ be a $\lambda$-local spectral expander for some $\lambda<\frac{1}{2}$. Then, for every connected covering $\cY$ of $\cX$, we have both $h_0(\cY,\FF_2)\geq \frac{1-2\lambda}{1-\lambda}$,  and  $CoSyst_1(\cY,\Sym)\geq \frac{1-2\lambda}{2-2\lambda}$.
\end{cor}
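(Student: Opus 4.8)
The plan is to chain together the results that precede this corollary. The key observation is that the hypothesis ``$\cX$ is a $\lambda$-local spectral expander with $\lambda<\tfrac12$'' is a property of vertex links, and these links are \emph{inherited} by coverings: if $\cY\to\cX$ is a covering map, then for every vertex $v\in V(\cY)$ with image $\bar v\in V(\cX)$, the link $\cY_v$ is isomorphic (as a weighted graph) to $\cX_{\bar v}$, since a covering map restricts to an isomorphism on links. Hence $\cY$ is also a $\lambda$-local spectral expander with the same $\lambda<\tfrac12$, and in particular $\cY$ is pure, connected, with fully supported descendent measures, so Theorem~\ref{thm:trickle} applies to $\cY$ directly.

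First I would spell out this ``links are preserved under covering'' remark, checking that the descendent measures on $\cY$ (the uniform lifts of $\mu_2,\mu_1,\mu_0$) restrict on each link to the same weighted graph as the corresponding link in $\cX$. Then, applying Oppenheim's trickling down theorem (Theorem~\ref{thm:trickle}) to $\cY$ gives that $\cY$ is a $\tfrac{\lambda}{1-\lambda}$-spectral expander. Next, feeding this into the weighted Cheeger inequality (Proposition~\ref{prop:weighted_Cheegr_lower_bound}) with spectral parameter $\tfrac{\lambda}{1-\lambda}$ yields
\[
h_0(\cY,\FF_2)\ \geq\ 1-\frac{\lambda}{1-\lambda}\ =\ \frac{1-2\lambda}{1-\lambda},
\]
which is the first claimed bound (and this is positive precisely because $\lambda<\tfrac12$). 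Finally, since this lower bound $\gamma:=\tfrac{1-2\lambda}{1-\lambda}$ holds uniformly for \emph{every} connected covering of $\cX$ — indeed every connected covering of a connected covering $\cY$ is again a connected covering of $\cX$, and is a $\lambda$-local spectral expander by the same inheritance argument — the hypothesis of Proposition~\ref{prop:edge_exp_implies_large_cosyst} is satisfied by $\cY$. That proposition then gives $CoSyst_1(\cY,\Sym)\geq\tfrac{\gamma}{2}=\tfrac{1-2\lambda}{2-2\lambda}$, completing the proof.

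I do not expect a serious obstacle here: the corollary is a bookkeeping composition of Theorem~\ref{thm:trickle}, Proposition~\ref{prop:weighted_Cheegr_lower_bound}, and Proposition~\ref{prop:edge_exp_implies_large_cosyst}. The only point that needs a moment's care — and the closest thing to a ``hard part'' — is verifying that a covering of a local spectral expander is again a local spectral expander with the \emph{same} $\lambda$; this rests on the elementary but essential fact that a topological covering map is a local isomorphism, hence induces isomorphisms of links together with their inherited weights, so no spectral quantity of any link changes when passing to a cover. Once that is noted, one must also observe that Proposition~\ref{prop:edge_exp_implies_large_cosyst} requires edge expansion of \emph{all} finite connected coverings of the complex in question (here $\cY$), which is exactly what the inheritance argument supplies, since $\operatorname{cov}(\operatorname{cov}(\cX))\subseteq\operatorname{cov}(\cX)$.
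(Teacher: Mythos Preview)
Your proposal is correct and follows essentially the same approach as the paper: links are preserved under coverings, then Theorem~\ref{thm:trickle} and Proposition~\ref{prop:weighted_Cheegr_lower_bound} give the edge expansion bound, and finally the cosystole bound follows. The only cosmetic difference is that where you invoke Proposition~\ref{prop:edge_exp_implies_large_cosyst} directly on $\cY$ (after observing that coverings of $\cY$ are coverings of $\cX$), the paper cites Corollary~\ref{cor:covers_have_latge_cosyst}, which packages exactly that observation.
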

\begin{proof}
    Since links are preserved by coverings, $\cY$ is a $\lambda$-local spectral expander. Since $\cY$ is connected as well, it satisfies the conditions of Theorem \ref{thm:trickle}, and thus it is a $\frac{\lambda}{1-\lambda}$-spectral expander. By Proposition \ref{prop:weighted_Cheegr_lower_bound}, we deduce that $h_0(\cY,\FF_2)\geq 1-\frac{\lambda}{1-\lambda}=\frac{1-2\lambda}{1-\lambda}$. The second claim follows now from Corollary \ref{cor:covers_have_latge_cosyst}.
\end{proof}
As mentioned in the introduction, Corollary \ref{cor:local_exp_implies_exp_of_coverings} applies uniformly to all the finite quotients of a Bruhat--Tits building, provided that it is a $\lambda$-local spectral expander with $\lambda<\frac{1}{2}$ --- which, by choosing $k=0$ in Theorem 4.1 of \cite{dikstein2023coboundary}, is typically the case.  In particular, this proves the uniform lower bound on large cosystoles in \cites{kaufman2016isoperimetric,evra2016bounded}, in a different manner, at least for dimension $1$.

\subsection{Codimension one systole of Riemannian manifolds}\label{sec:Riemannian_man}

In this section we will give an unexpected application of our study.  Let $(M_0,g_0)$ be a closed Riemannian manifold of dimension $n$, equipped with a fixed triangulation $\cX_0$. For a finite sheeted covering $M\to M_0$, let $g$ (respectively $\cX$) be the pullback of  $g_0$ (respectively $\cX_0$) to $M$. The $i$-dimensional systole of $(M,g)$ with $\FF_2$ coefficients is the infimum over the volumes of homologically non-trivial Lipschitz $i$-cycles in $(M,g)$.
We denote this value by $Sys_i(M,\FF_2)$. Let us define it with more details  (cf.\ Section 2 in \cite{guth2014quantum}). First, a Lipschitz $i$-chain $\alpha$ with coefficients in $\FF_2$ is a finite sum $\sum_{j=1}^t a_jf_j$, with $a_j\in \FF_2$ and $f_j$  Lipschitz maps from the standard $i$-simplex $\Delta^i$ to $M$. Denote $\vol(\alpha)=\sum a_j\vol(f_j)$, where $\vol(f)$ for $f\colon \Delta^i\to (M,g)$ is the volume of the pullback metric $f^*(g)$ on $\Delta^i$. Let $C_i=C_{i,Lip}(M,\FF_2)$ be the space of $i$-chains and $\partial_i=\partial_{i,Lip}\colon C_i\to C_{i-1}$ the $i$-boundary map, which is defined by restricting $f_j$ to the $(i-1)$-skeleton of $\Delta^i$. Let $Z_i=Z_i(M,\FF_2)$, the $i$-cycles, be the kernel of $\partial_i$, and let $B_i=B_i(M,\FF_2)$, the $i$-boundaries, be the image of $\partial_{i+1}$. Finally, let $H_i=H_i(M,\FF_2)$ be the $i^{\rm th}$ homology, namely $Z_i/B_i$. So, $Sys_i(M,g)$ is the infimum over $\alpha\in Z_i\setminus B_i$ of $\vol(\alpha)$. If $H_i=0$, then we define $Sys_i(M,g)=\infty$.

\begin{thm}\label{thm:Riemannian_manifolds}
    Let $(M_0,g_0)$ be a closed Riemannian manifold of dimension $n$.  Assume $\pi_1(M_0,*)$ has property $(\tau)$. Then, there exists a constant $c=c(M_0,g_0)>0$ such that for every $M$ a finite sheeted covering of $(M_0,g_0)$ equipped with the pullback metric $g$,  $$Sys_{n-1}(M,\FF_2)\geq c\cdot \vol(M).$$
\end{thm}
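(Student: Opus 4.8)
The plan is to translate the codimension-one systole into the combinatorial $1$-cosystole of a fixed triangulation, and then quote clause $(2)$ of Theorem~\ref{thm:intro_exp_of_coverings_implies_cosystols}. First I would fix the triangulation $\cX_0$ of $M_0$; for a finite connected covering $M\to M_0$ of degree $d$, the pulled-back triangulation $\cX$ is a finite connected covering of $\cX_0$, with $\pi_1(\cX,*)$ a finite-index subgroup of $\pi_1(M_0,*)$. Since $Z^1,B^1$ and the connectedness condition depend only on the $2$-skeleton --- which here covers the $2$-skeleton of $\cX_0$ --- Proposition~\ref{prop:tau_implies_edge_expansion} together with Corollary~\ref{cor:covers_have_latge_cosyst} (i.e.\ clause $(2)$ of Theorem~\ref{thm:intro_exp_of_coverings_implies_cosystols}) yields a constant $\kappa=\kappa(M_0,\cX_0)>0$, \emph{uniform over the whole tower of covers}, with $CoSyst_1(\cX,\FF_2)\geq CoSyst_1(\cX,\Sym)\geq\kappa$ (the first inequality because a non-coboundary $\FF_2$-cocycle encodes a connected double cover, so it competes in the infimum defining $CoSyst_1(\cX,\Sym)$). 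It then remains to bound Riemannian $(n-1)$-cycles from below by $CoSyst_1(\cX,\FF_2)\cdot\vol(M)$, up to a constant depending only on $(M_0,g_0,\cX_0)$.

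The bridge is the dual block decomposition $\cX^{\vee}$ of the triangulated manifold $M$: its $(n-1)$-blocks are indexed by the edges of $\cX$, its cells are lifts of the finitely many cells of $\cX_0^{\vee}$, and combinatorial Poincar\'e duality identifies mod-$2$ cellular $(n-1)$-cycles of $\cX^{\vee}$ with $1$-cocycles of $\cX$, matching (co)homology classes and matching the set of $(n-1)$-blocks occurring in a cycle with the support of the corresponding cocycle. Given a Lipschitz $(n-1)$-cycle $z$ with $[z]\neq 0$ in $H_{n-1}(M,\FF_2)$, I would use the Federer--Fleming deformation theorem --- valid with a constant depending only on the (uniformly bounded) geometry of $\cX_0^{\vee}$ --- to push $z$, within its $\FF_2$-homology class, onto the $(n-1)$-skeleton of $\cX^{\vee}$, obtaining a cellular $(n-1)$-cycle $z'$ with $\vol(z')\leq C_0\,\vol(z)$. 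As every $(n-1)$-block of $\cX^{\vee}$ has $(n-1)$-volume at least some $v_{\min}=v_{\min}(M_0,g_0,\cX_0)>0$, the cycle $z'$ uses at most $\vol(z')/v_{\min}\leq (C_0/v_{\min})\,\vol(z)$ blocks.

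Dualizing $z'$ gives a cocycle $\alpha\in Z^1(\cX,\FF_2)$ with $[\alpha]=\mathrm{PD}([z'])=\mathrm{PD}([z])\neq 0$, hence $\alpha\notin B^1(\cX,\FF_2)$, whose support has size $\leq (C_0/v_{\min})\,\vol(z)$. Since, with the uniform edge measure, $\Vert\alpha\Vert$ is this support size divided by the number of edges of $\cX$, and since $|\raE(\cX)|=d\,|\raE(\cX_0)|$ and $\vol(M)=d\,\vol(M_0)$ are both proportional to $d$, we obtain
\[
\kappa\ \leq\ CoSyst_1(\cX,\FF_2)\ \leq\ \Vert\alpha\Vert\ \leq\ C\cdot\frac{\vol(z)}{\vol(M)}
\]
for a constant $C=C(M_0,g_0,\cX_0)$. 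Rearranging gives $\vol(z)\geq (\kappa/C)\,\vol(M)$, and taking the infimum over all homologically non-trivial $z$ proves the theorem with $c=\kappa/C$.

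The step I expect to be the main obstacle is the Federer--Fleming deformation: one must push a \emph{merely Lipschitz} codimension-one cycle onto the dual $(n-1)$-skeleton while controlling the volume blow-up by a factor uniform across all covers at once --- this is exactly where the bounded geometry of the pulled-back triangulation (all cells being lifts of finitely many model cells) is essential. The Lipschitz (rather than smooth) regularity is handled by the standard simplicial deformation procedure, as in the systolic-to-combinatorial translations of \cite{guth2014quantum}, and some care is needed so that the deformed cycle lands in the combinatorial model $\cX^{\vee}$ itself (e.g.\ by first replacing $z$ with a nearby cellular cycle in a common refinement of $\cX$ and $\cX^{\vee}$), so that its Poincar\'e dual is genuinely a cocycle on $\cX$ and the constants stay uniform over the tower.
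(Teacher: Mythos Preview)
Your proposal is correct and follows essentially the same approach as the paper's proof sketch: reduce the Riemannian $(n-1)$-systole to the combinatorial $1$-cosystole via Poincar\'e duality and the dual cell structure, invoke Federer--Fleming deformation with constants depending only on the base triangulation (the paper cites \cite{guth2014quantum} and \cite{guth2006notes} for this translation), and then apply clause $(2)$ of Theorem~\ref{thm:intro_exp_of_coverings_implies_cosystols}. Your write-up is in fact more detailed than the paper's sketch, and you correctly identify the Federer--Fleming step---with constants uniform over the covering tower---as the main technical point.
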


This applies, for example, to all manifolds $M_0={}_{\Gamma_0}\backslash^ G/_K$, when $G$ is a high-rank simple real Lie group, $K$ a maximal compact subgroup and $\Gamma_0$ a torsion free cocompact lattice in $G$. Here $n=\dim \nicefrac{G}{K}$, where $\nicefrac{G}{K}$ is the symmetric space associated with $G$, and so $M$ is a locally symmetric closed manifold.  It is interesting to compare Theorem \ref{thm:Riemannian_manifolds} with the following result.
\begin{prop}\label{prop:log_systole}
    Let $M_0,g_0$ and $M$ be as in Theorem \ref{thm:Riemannian_manifolds}. Then there exists a constant $c'=c'(M_0,g_0)>0$ such that $Sys_1(M)\leq c'\cdot\log(\vol(M))$.
\end{prop}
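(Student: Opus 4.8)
The plan is to exploit property $(\tau)$ in exactly the form isolated in Proposition \ref{prop:tau_implies_edge_expansion}: it forces the $1$-skeleton of every finite cover of $M_0$ to have \emph{logarithmic} diameter, and a short non-contractible loop can then be read off a spanning tree. Fix a triangulation $\cX_0$ of $M_0$, let $n=n(M)$ be the degree of $M\to M_0$, and let $\cX$ be the lifted triangulation, so that $\vol(M)=n\cdot\vol(M_0)$ and $|\cX(0)|=n\cdot|\cX_0(0)|$; since $\log\vol(M)$ and $\log n$ differ by the constant $\log\vol(M_0)$, it suffices to bound $Sys_1(M)$ by $O(\log n)$. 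We may assume $\pi_1(M_0,*)$ is infinite, since otherwise there are only finitely many covers $M$ and any large enough $c'$ works.

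First I would apply Proposition \ref{prop:tau_implies_edge_expansion} to $\cX_0$ (its statement and proof depend only on the $1$-skeleton): since $\pi_1(\cX_0,*)=\pi_1(M_0,*)$ has $(\tau)$, there is $\gamma=\gamma(M_0)>0$ with $h_0(\cX,\FF_2)\ge\gamma$ for every finite connected cover, i.e.\ $G(\cX)$ is a $\gamma$-edge expander. A connected $N$-vertex graph of bounded degree with edge expansion $\ge\gamma$ has combinatorial diameter $O(\log N)$ (iterate $|B_{r+1}(v)|\ge(1+\gamma')|B_r(v)|$ until balls exceed $N/2$); applied to $\cX$, whose vertex degrees are bounded by those of $\cX_0$, this gives $\operatorname{diam}G(\cX)\le D$ with $D=O(\log n)$, the implied constant depending only on $(M_0,\cX_0)$.

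The main step is then soft. Pick a spanning tree $T\subseteq G(\cX)$ of radius $\le D$ about the basepoint; then $\pi_1(M,*)=\pi_1(\cX,*)$ is generated by the loops $\{\ell_e\}$, one for each non-tree edge $e$, obtained by closing $e$ up through $T$, and each $\ell_e$ has combinatorial length $\le 2D+1$, hence $g$-length $\le(2D+1)L$ where $L=L(\cX_0,g_0)$ bounds the $g_0$-length of an edge of $\cX_0$. Since $\pi_1(M,*)\neq 1$, at least one $\ell_e$ is non-contractible, so $Sys_1(M)\le(2D+1)L=O(\log n)=O(\log\vol(M))$, which is the claim after absorbing $L,\gamma$ and $\vol(M_0)$ into $c'=c'(M_0,g_0)$. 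If one instead reads $Sys_1(M)$ as the $\FF_2$-homological systole, the very same loops $\{\ell_e\}$ generate $H_1(M,\FF_2)$, so whenever $H_1(M,\FF_2)\neq 0$ one of them is $\FF_2$-homologically non-trivial and the bound holds verbatim; this is precisely where $(\tau)$ is indispensable, since for a general group a finite-index subgroup of index $n$ may only be generated by loops of length linear in $n$ (think of covers of a circle).

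I do not anticipate a genuine obstacle here — the argument is essentially ``$(\tau)\Rightarrow$ expander covers $\Rightarrow$ logarithmic diameter $\Rightarrow$ short generating loops''. The only points requiring a little care are the uniform constant in the edge-expansion-to-diameter estimate, and, for the $\FF_2$ reading, bookkeeping the degenerate cases ($\pi_1(M_0)$ finite, $H_1(M,\FF_2)=0$, $\vol(M_0)<1$), none of which affect the stated logarithmic asymptotics.
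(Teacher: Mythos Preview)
Your argument is correct and follows the same overall strategy as the paper: use Proposition~\ref{prop:tau_implies_edge_expansion} to see that $G(\cX)$ is a uniform edge expander, deduce that its diameter is $O(\log|V(\cX)|)=O(\log\vol(M))$, and then convert logarithmic diameter into a short homologically non-trivial loop.

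The only difference is in this last conversion. The paper invokes a separate Lemma (attributed to Guth): for any closed Riemannian manifold with $H_1(M,\FF_2)\neq 0$ one has $Sys_1(M,\FF_2)\le 2\,\textrm{Diam}(M)$, proved by taking a shortest non-trivial loop $\sigma$, cutting it at antipodal points $x,y$, and comparing with a geodesic from $x$ to $y$. You instead use the spanning-tree generators of $\pi_1$: a BFS tree of radius $\le D$ gives generating loops of combinatorial length $\le 2D+1$, and at least one must be non-trivial in $\pi_1$ (or in $H_1(\cdot,\FF_2)$). Both are standard and yield the same bound up to constants; yours stays entirely on the combinatorial side and avoids stating a separate lemma, while the paper's route gives the clean universal inequality $Sys_1\le 2\,\textrm{Diam}$ which is of independent interest.
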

We postpone the proof of Proposition \ref{prop:log_systole} to the end of this section.
Let us add a remark before we sketch the proof of Theorem \ref{thm:Riemannian_manifolds}. In principal, it might be that $M_0$ has no finite sheeted coverings (i.e., $\pi_1(M_0,*)$ has no finite index subgroups) or even if it has many (e.g., $\pi_1(M_0,*)$ is an infinite residually finite group), it might be that for all coverings $H_{n-1}(M,g)=0$. 
In these cases, Theorem \ref{thm:Riemannian_manifolds} says nothing. Still, for the most interesting examples, when $\Gamma_0$ as above is a cocompact lattice in a high-rank simple Lie group, there are always infinitely many coverings with non-trivial $n-1$ homology with $\FF_2$ coefficients. This is deduced by applying Poincar\'e duality (cf. Chapter 3.3 of \cite{Hatcher_Alg_Top}) on the results of \cite{lubotzky1987finite}.

\begin{proof}[Proof sketch of Theorem \ref{thm:Riemannian_manifolds}]

It is well known that if $\cX$ is the triangulation of $M$ as above, then $H_i(\cX,\FF_2)=H_i(M,\FF_2)$ (cf. Chapter 2.1 of \cite{Hatcher_Alg_Top}). Furthermore, if $\alpha$ is an $i$-cycle of $\cX$, it can be considered as a Lipschitz cycle of $M$. 
Moreover, as shown in Lemma 4 in \cite{guth2014quantum}, the norm $\|\alpha\|$ of $\alpha$ as an element of $Z_i(\cX,\FF_2)$ is at least $c_0(M_0,g_0,\cX_0)\vol(\alpha)$ --- note that when evaluating $\vol(\alpha)$ we consider it as an element of $Z_i(M,\FF_2)$.
Though it is not stated explicitly in \cite{guth2014quantum}, there is also an inverse inequality in the following sense: 
Every $i$-cycle $\alpha$ in $M$ can be approximated in $\cX$ by ``pushing'' $\alpha$ to the $i$-skeleton of $\cX$, resulting with an $i$-cycle $\overline \alpha$ of $\cX$. For this cycle we have $\|\overline \alpha\|\leq c_1(M_0,g_0,\cX_0)\vol(\alpha)$ for some $c_1>0$. This can be deduced from a result of Federer--Fleming; See \cite{guth2006notes} for the Euclidean case, although the argument is general and applies in our more general setup. 

All in all, $Sys_i(M,\FF_2)$ and $Sys_i(\cX,\FF_2)$ bound each other up to a constant multiple. Furthermore, as explained in Section 2 of \cite{guth2014quantum}, similar arguments apply for the cosystoles, and in particular, by using the Poincar\'e dual polyhedral  $\cX'$ of $\cX$, one can deduce that $Sys_{n-1}(M,\FF_2)\geq c\cdot CoSys_1(\cX,\FF_2)$ for some $c>0$ that depends only on $M_0,g_0$ and $\cX_0$. Thus, clause $(2)$ of Theorem \ref{thm:intro_exp_of_coverings_implies_cosystols}, which was proved in Section \ref{sec:prop_tau_and_edge_exp_of_coverings}, finishes the proof.

\end{proof}

We learned the following Lemma from  Larry Guth:
\begin{lem}\label{lem:Guth}
    Let $(M,g)$ be a closed Riemannian manifold with a non-trivial first $\FF_2$-homology. Then $Sys_1(M,\FF_2)\leq 2\textrm{Diam}(M)$, where $\textrm{Diam}(M)$ is the diameter of $M$, namely, the length of the shortest path between the furthest two points on the manifold.
\end{lem}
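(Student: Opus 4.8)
The plan is to exhibit, for every class in $H_1(M,\FF_2)$, a representative $1$-cycle of length at most $2\operatorname{Diam}(M)$, following a standard ``shortest loop in each class'' argument. First I would fix a basepoint $p\in M$ and recall that, since $M$ is a closed manifold, $H_1(M,\FF_2)$ is generated by loops; in fact it is convenient to work with the image of $\pi_1(M,p)$ in $H_1(M,\FF_2)$, which is all of $H_1(M,\FF_2)$ (the abelianization-then-reduce-mod-$2$ map is surjective). So a non-trivial class is represented by some loop $\gamma$ based at $p$. The goal is to replace $\gamma$ by a short loop representing the same class.

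The key geometric step is the following: cover $M$ by the universal cover $\widetilde M$ (or an intermediate cover), and observe that a class $c\neq 0$ in $H_1(M,\FF_2)$ is detected by a homomorphism $\phi\colon \pi_1(M,p)\to \FF_2$ via the dual pairing; equivalently, there is a connected double cover $\pi\colon \widehat M\to M$ such that a loop $\gamma$ lifts to a loop in $\widehat M$ if and only if $[\gamma]$ pairs trivially with $c$. Now pick a point $q\in \widehat M$ over $p$ and let $q'\neq q$ be the other point of the fiber $\pi^{-1}(p)$. Take a shortest path $\widehat\sigma$ in $\widehat M$ from $q$ to $q'$ (with the pulled-back metric, in which $\pi$ is a local isometry); its length is at most $\operatorname{Diam}(\widehat M)$. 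Then $\sigma=\pi\circ\widehat\sigma$ is a loop at $p$ in $M$ which does \emph{not} lift to a loop, hence pairs non-trivially with $c$, so $[\sigma]\neq 0$ in $H_1(M,\FF_2)$; in particular $\operatorname{Sys}_1(M,\FF_2)\le \operatorname{length}(\sigma)\le\operatorname{Diam}(\widehat M)$.

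To finish I would bound $\operatorname{Diam}(\widehat M)$ in terms of $\operatorname{Diam}(M)$. This is where the factor $2$ comes in: given any two points $\widehat x,\widehat y\in\widehat M$, let $x=\pi(\widehat x)$, $y=\pi(\widehat y)$ and choose a shortest path $\tau$ in $M$ from $x$ to $y$, of length $\le\operatorname{Diam}(M)$. Lift $\tau$ starting at $\widehat x$; it ends at some point $\widehat y_0$ in the fiber over $y$, which is either $\widehat y$ or the other preimage. In the first case $d_{\widehat M}(\widehat x,\widehat y)\le\operatorname{Diam}(M)$. In the second case, concatenate with a lift of a shortest loop at $y$ that swaps the two sheets — but to stay elementary, instead repeat the argument: go from $\widehat x$ to $\widehat y_0$ (length $\le\operatorname{Diam}(M)$) and then from $\widehat y_0$ to $\widehat y$; since $\widehat y_0$ and $\widehat y$ lie over the same point $y$, a shortest path in $M$ from $y$ to itself that changes sheets has length $\le 2\operatorname{Diam}(M)$ in general, but here we only need: the path $\widehat\sigma$ we actually care about goes from $q$ to $q'$ over $p$, so take $\tau$ a shortest loop at $p$ in $M$ representing a sheet-swapping class — by the first two paragraphs such a loop has length at most $\operatorname{Sys}_1(M,\FF_2)$... which is circular.

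The cleanest fix, and the step I expect to be the only real subtlety, is to run the argument directly in $M$ without invoking diameters of covers: among all loops at $p$ whose class pairs non-trivially with $c$, take one, say $\gamma$, write it as a concatenation of minimizing segments, and use a ``cut-and-reconnect at the midpoint'' move. Concretely, if $\gamma$ has length $L>2\operatorname{Diam}(M)$, pick a point $m$ on $\gamma$ dividing it into arcs $\gamma_1,\gamma_2$ from $p$ to $m$ with $\operatorname{length}(\gamma_1),\operatorname{length}(\gamma_2)>\operatorname{Diam}(M)$ is impossible for both simultaneously — so instead pick $m$ to be the midpoint, let $\rho$ be a minimizing path from $p$ to $m$ (length $\le\operatorname{Diam}(M)$), and form the two loops $\gamma_1\bar\rho$ and $\rho\gamma_2$. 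Their classes sum to $[\gamma]$ in $H_1(M,\FF_2)$, so at least one of them, say $\gamma_1\bar\rho$, still pairs non-trivially with $c$, and it has length $\operatorname{length}(\gamma_1)+\operatorname{length}(\rho)\le \tfrac{L}{2}+\operatorname{Diam}(M)<L$. Iterating, the infimum of lengths of loops pairing non-trivially with $c$ is at most $2\operatorname{Diam}(M)$ (the fixed point of $L\mapsto L/2+\operatorname{Diam}(M)$), and since any such loop is homologically non-trivial, $\operatorname{Sys}_1(M,\FF_2)\le 2\operatorname{Diam}(M)$. The main obstacle is making the ``midpoint cut'' rigorous for merely Lipschitz cycles rather than smooth geodesic loops — one has to know a homologically nontrivial class is represented by a \emph{loop} (not just a cycle) of near-minimal length, which follows from the surjectivity of $\pi_1(M,p)\to H_1(M,\FF_2)$ together with a compactness/Arzelà–Ascoli argument on the space of based loops of bounded length in the compact manifold $M$.
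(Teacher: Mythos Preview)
Your final ``cut at the midpoint'' argument is correct and is exactly the paper's approach: split the loop at two antipodal points, connect them by a minimizing segment $\rho$ of length $\le\operatorname{Diam}(M)$, observe that the two resulting loops sum to the original class in $H_1(M,\FF_2)$, and compare lengths. The only real difference is packaging: the paper starts directly with a \emph{shortest} representative $\sigma$ of a nontrivial class (whose existence is the Arzel\`a--Ascoli step you flagged), so the inequality $\ell(\sigma)\le \tfrac{\ell(\sigma)}{2}+\operatorname{Diam}(M)$ is immediate and no iteration is needed. Your iteration to the fixed point $2\operatorname{Diam}(M)$ is equivalent but less direct, and the detour through the double cover and the pairing with a cohomology class $c$ are unnecessary --- once you have the midpoint cut, ``at least one of the two pieces is homologically nontrivial'' is all you need.
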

\begin{proof}
    Let $\sigma$ be a shortest representative of a non-trivial element in $H_1(M,\FF_2)$. Every two points $x,y$ on $\sigma$ separate it into two paths $x\xrightarrow{\sigma_1}y$ and $x\xrightarrow{\sigma_2}y$ such that $\sigma=\sigma_1\sigma_2^{-1}$. Let $x\xrightarrow{\sigma_3}y$ be a shortest path in $M$ between $x$ and $y$. Then $\sigma_1\sigma_3^{-1}+\sigma_3\sigma_2^{-1}=\sigma$ as elements of $H_1(M,\FF_2)$. Thus, either $\sigma_1\sigma_3^{-1}$ or $\sigma_3\sigma_2^{-1}$ are non-trivial in $H_1(M,\FF_2)$, and by our assumption on $\sigma$ as a shortest representative, we know that $$\max\{\ell(\sigma_1\sigma_3^{-1}),\ell(\sigma_3\sigma_2^{-1})\}\geq \ell(\sigma).$$ Since we can choose $x$ and $y$ such that $\ell(\sigma_1)=\ell(\sigma_2)=\frac{1}{2}\ell(\sigma)$, and by the definition of the diameter, $\ell(\sigma_3)\leq \textrm{Diam}(M)$, we can deduce that for this choice of $x$ and $y$ 
    \[
\max\{\ell(\sigma_1\sigma_3^{-1}),\ell(\sigma_3\sigma_2^{-1})\}\leq \frac{\ell(\sigma)}{2}+\textrm{Diam}(M).
    \]
    Therefore $\ell(\sigma)\leq 2\textrm{Diam}(M)$ and we deduce the Lemma.
\end{proof}

\begin{proof}[Proof of Proposition \ref{prop:log_systole}]
Recall that $\cX$ is the pullback of the triangulation $\cX_0$ of $M_0$ to $M$.
By Proposition \ref{prop:tau_implies_edge_expansion}, the $1$-skeleton of $\cX$ is an expander graph, and hence $\textrm{Diam}(\cX)$ is $O(\log|V(\cX)|)$. Since $|V(\cX)\|$ is proportional (up to multiplicative constants) to $\vol(M)$, and $\textrm{Diam}(X)$ is proportional to $\textrm{Diam}(M)$, the conclusion follows from Lemma \ref{lem:Guth}.
\end{proof}

\section{\textbf{Bounded degree $2$-dimensional $\FF_2$-coboundary exapnders}}\label{sec:two_dim_coboundary_expanders_F2}
In this section we resolve Problem \ref{prob:coboundary_expanders_F2} for the special case of dimension $2$. As oppose to Section \ref{sec:large_cosystols}, where we deduce results about $\FF_2$ coefficients by using the permutation coefficients framework, here we use a direct approach, relying heavily on the work of Evra--Kaufman \cite{evra2016bounded}. This section requires some expertise in the theory of arithmetic and algebraic groups, but it is not needed for the rest of the paper.

Let $G$ be a simple $p$-adic Lie group of rank $d\geq 3$, and let $\cB$ be the Bruhat--Tits building of $G$. The building $\cB$ is a contractible, infinite, simplicial complex of dimension $d$ which has a bounded degree --- see \cite{tits1979reductive} or Section 3.4 in \cite{platonov1993algebraic} for the definitions and some background. For every cocompact lattice $\Gamma$ of $G$, the simplicial complex $\cX= {_\Gamma}\backslash^\cB$ is finite and of degree bounded by the degree of $\cB$ (which is independent of $\Gamma$). 
The following is a corollary from the main theorem  of \cite{evra2016bounded}:
\begin{thm}[Corollary 1.12 in \cite{evra2016bounded}]\label{thm:cor_of_EK}
 For a large enough prime number $p$,  $h_1(\cX,\FF_2)\geq \lambda>0$.
\end{thm}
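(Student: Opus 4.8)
The plan is to deduce this from the main theorem of Evra--Kaufman \cite{evra2016bounded} via Garland's method, i.e., the local-to-global philosophy underlying Section \ref{sec:local_global_exp}, but applied to cocycle expansion rather than merely to the spectral and cosystolic consequences recorded in Corollary \ref{cor:local_exp_implies_exp_of_coverings}. First I would fix, after passing to a finite-index subgroup of $\Gamma$ if necessary, a cocompact lattice acting on $\cB$ without inversions, so that $\cX={}_\Gamma\backslash^\cB$ is an honest bounded-degree simplicial complex of dimension $d\geq 3$ whose link of every face of dimension $j$ is a (thick) spherical building of rank $d-j$ over the residue field $\FF_q$ of $G$. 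The hypothesis $d\geq 3$ is what guarantees that the vertex links are rank-$\geq 2$ spherical buildings, hence genuine coboundary expanders, and that there is enough room above dimension $1$ for the inductive local-to-global step.

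The argument then rests on three inputs. First, \emph{the links are uniform coboundary expanders}: by the spherical-building part of Theorem \ref{thm:intro_gromov_complete_and_buildings} --- the higher-dimensional analogue of Propositions \ref{prop:complete_complex_stable} and \ref{prop:building_stable}, via the cones method or the color-restriction technique of Dikstein--Dinur \cite{dikstein2023coboundary} --- every link of a face of dimension $\leq d-2$ is an $\FF_2$-coboundary expander with a constant depending only on the Coxeter type of $G$, and in particular not on $q$; links of codimension-one faces are expander graphs by thickness. Second, \emph{$\cX$ is a good local spectral expander}: for $p$ large the Ramanujan property of the quotients of $\cB$ (Lubotzky--Samuels--Vishne \cite{LSV2005ramanujan}), or more directly Theorem 4.1 of \cite{dikstein2023coboundary} with $k=0$, shows that $\cX$ is a $\lambda(p)$-local spectral expander with $\lambda(p)\xrightarrow{p\to\infty}0$ --- this is the \emph{only} place largeness of $p$ enters. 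Third, \emph{local-to-global for cocycle expansion}: feeding the first two inputs into the Garland-type theorem of Evra--Kaufman (extending Kaufman--Kazhdan--Lubotzky \cite{kaufman2016isoperimetric}) yields $h_i(\cX,\FF_2)\geq \lambda_i>0$ for every $1\leq i\leq d-1$, and Corollary 1.12 of \cite{evra2016bounded} is precisely this statement for $i=1$.

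The hard part is the third input: showing that the small local views assemble into a global near-cocycle. Given a $1$-cochain $\alpha$ with $\Vert\delta\alpha\Vert$ small, one restricts $\alpha$ to each vertex link, uses the $0$- (and, in higher codimension, $1$-)coboundary expansion of that link to replace the restriction by a genuine local coboundary up to an error proportional to $\Vert\delta\alpha\Vert$, and then glues the local corrections across edges; this gluing is consistent only up to an error controlled by the spectral gap of $\cX$, which is why one needs $\lambda(p)$ small \emph{relative to} the (absolute) link coboundary-expansion constants --- hence $p$ large. I would also note two bookkeeping points that matter later: passing to the $2$-skeleton does not change $h_1$, $h_1^B$ or $CoSyst_1$, since in dimension $1$ these depend only on the $2$-skeleton; and promoting the cocycle statement $h_1$ to the coboundary statement $h_1^B$ requires separately that $H^1(\cX,\FF_2)=0$, which by Corollary \ref{cor:first_cohomology_vanishes_equivalence} amounts to $\Hom(\Gamma,\FF_2)=0$ and is arranged using property (T) of $\Gamma$ together with a suitable choice of congruence lattice.
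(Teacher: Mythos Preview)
The paper does not supply its own proof of this statement: it is quoted verbatim as Corollary~1.12 of \cite{evra2016bounded} and used as a black box. Your proposal is a faithful high-level outline of the Evra--Kaufman argument itself (local coboundary expansion of spherical-building links, local spectral expansion for large $p$, and the local-to-global machinery extending \cite{kaufman2016isoperimetric}), so there is nothing to compare --- you have sketched the cited proof rather than an alternative to it. One small correction to your last bookkeeping remark: property~(T) alone does not force $\Hom(\Gamma,\FF_2)=0$ (it only gives finite abelianization); the paper arranges this separately via the congruence subgroup property and a careful choice of the local factor at $2$, as in Section~\ref{sec:two_dim_coboundary_expanders_F2}.
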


Therefore, to prove the existence of bounded $2$-dimensional coboundary expanders, it suffices  to prove that for \textbf{some} $G$ as above and infinitely many lattices $\Gamma$ of $G$, the first cohomology of $\cX={_\Gamma}\backslash^\cB$ with $\FF_2$ coefficients vanishes. Now, as $\cB$ is contractible, $H^1({_\Gamma}\backslash^\cB, \mathbb{F}_2) = H^1(\Gamma, \mathbb{F}_2)$.  But, $H^1(\Gamma, \mathbb{F}_2) \cong \nicefrac{\Gamma}{[\Gamma,\Gamma]\Gamma^2}$. Hence: 
\begin{fact}\label{fact:vanishing_of_cohomology}
    The vanishing of $H^1(\cX, \mathbb{F}_2)$ is equivalent to the claim that $\Gamma$ does not have a quotient group of order 2. Equivalently, $\widehat\Gamma$ the profinite completion of $\Gamma$ does not have such a quotient.
\end{fact}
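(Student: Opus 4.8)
The plan is to prove the two stated equivalences by unwinding standard identifications; the whole argument is formal once the two inputs — contractibility of $\cB$ and freeness of the $\Gamma$-action (so we take $\Gamma$ torsion-free, harmless for the application) — are recorded.

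First I would reprove the identification $H^1(\cX,\FF_2)\cong H^1(\Gamma,\FF_2)$ used in the paragraph above the statement: since $\cB$ is contractible and $\Gamma$ acts freely and simplicially on it, $\cX={}_{\Gamma}\backslash^{\cB}$ is a finite $K(\Gamma,1)$; alternatively, avoiding asphericity altogether, universal coefficients give $H^1(\cX,\FF_2)=\Hom(H_1(\cX,\ZZ),\FF_2)=\Hom(\pi_1(\cX),\FF_2)=\Hom(\Gamma,\FF_2)$, the last equality using $\pi_1(\cX)=\Gamma$. Since $\Gamma$ acts trivially on $\FF_2$, a group $1$-cocycle is just a homomorphism and the only $1$-coboundary is $0$, so $H^1(\Gamma,\FF_2)=\Hom(\Gamma,\FF_2)$ as well. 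The first equivalence is then immediate: $\Hom(\Gamma,\FF_2)\neq 0$ iff there is a nonzero homomorphism $\Gamma\to\FF_2$, and since $\FF_2$ has no proper nontrivial subgroup every such homomorphism is surjective; thus $H^1(\cX,\FF_2)\neq 0$ iff $\Gamma$ has a quotient of order $2$. For the bookkeeping one also notes that any $\varphi\colon\Gamma\twoheadrightarrow\FF_2$ factors through $\Gamma/[\Gamma,\Gamma]\Gamma^2$, recovering the displayed isomorphism $H^1(\Gamma,\FF_2)\cong\Gamma/[\Gamma,\Gamma]\Gamma^2$.

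For the profinite reformulation I would use that $\FF_2$ is a finite group together with the universal property of $\widehat\Gamma=\varprojlim_{N}\Gamma/N$. Every homomorphism $\Gamma\to\FF_2$ has finite-index kernel, hence factors through some $\Gamma/N$ and therefore through $\widehat\Gamma$, yielding a continuous homomorphism $\widehat\Gamma\to\FF_2$; conversely, precomposition with the canonical map $\iota\colon\Gamma\to\widehat\Gamma$ turns a continuous homomorphism $\widehat\Gamma\to\FF_2$ into a homomorphism $\Gamma\to\FF_2$. These two operations are mutually inverse because $\iota(\Gamma)$ is dense and $\FF_2$ is Hausdorff, and surjectivity is preserved in both directions (the image of a surjection contains the image of $\iota(\Gamma)$, which is dense in the finite discrete group $\FF_2$, hence all of it). So $\Gamma$ admits an order-$2$ quotient iff $\widehat\Gamma$ does, completing the chain. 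There is no genuine obstacle; the only points deserving care are the topological-to-group-cohomology identification in the first step — which is why one restricts to torsion-free $\Gamma$ — and making explicit that "$\Hom(\Gamma,\FF_2)\neq 0$", "$\Gamma$ surjects onto $\FF_2$", and "$\Gamma$ has an order-$2$ quotient" are the same condition.
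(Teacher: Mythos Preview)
Your proof is correct and follows the same route as the paper: the paper records (in the sentence immediately preceding the Fact) the identifications $H^1(\cX,\FF_2)=H^1(\Gamma,\FF_2)\cong \Gamma/[\Gamma,\Gamma]\Gamma^2$ and then states the Fact as an immediate consequence, which is exactly what you unpack in detail. Your added remarks --- the alternative via universal coefficients that bypasses asphericity, and the explicit check that $\Hom(\Gamma,\FF_2)=\Hom_{\rm cts}(\widehat\Gamma,\FF_2)$ via the universal property --- are welcome elaborations but not a different argument.
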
 
The rest of the section is devoted to an appropriate choice of $G$ and  infinitely many lattices without quotients of order $2$. Note that such lattices will provide us simplicial complexes of dimension $d=\textrm{rank}(G)$, but we care only about the $2$-skeleton of these complexes.

Let $k$ be a number field and $\frak{G}$ a $k$-algebraic group which is simple, connected and simply connected. Let $\mathbb{A}_k$ be the ring of Adeles of $k$, i.e., $\mathbb{A}_k=\prod^*_{\nu} k_\nu$ where $\nu$ runs over all norms of $k$ and $k_\nu$ is the completion of $k$ with respect to $\nu$. Let $\mathbb{A}_k^\infty$ (respectively $\mathbb{A}_k^f$) be the product over all the archemedian norms (respectively non-archemedian norms). Now, $\frak{G}(k)$ is a discrete subgroup embedded diagonally in $\frak{G}(\mathbb{A}_k)$, in fact, a lattice.
By the strong approximation theorem (SAT for short, see Section 7.4 in \cite{platonov1993algebraic}), if for some $\nu_0$, $\frak{G}(k_{\nu_0})$ is not compact, then $\frak{G}(k)\times \frak{G}(k_{\nu_0})$ is dense in $\frak{G}(\mathbb{A}_k)$. Equivalently, $\frak{G}(k)$ is dense in $\frak{G}(\mathbb{A}_{k\setminus \{\nu_0\}})$, where $\mathbb{A}_{k\setminus \{\nu_0\}}=\prod^*_{\nu\neq \nu_0} k_\nu$.

Assume from now on that $\frak{G}(\mathbb{A}_k^\infty)$ is compact. Fix a non-archemedian norm $\nu_0$ where $\nu_0\not|\ 2$ --- i.e., the prime associated with $\nu_0$ does not lie over the rational prime $2$ --- so that $\frak{G}(k_{\nu_0})$ is \textbf{not} compact. Now, for every  open compact subgroup $K$ of $\frak{G}(\mathbb{A}_{k\setminus\{\nu_0,\infty\}})$, $\Gamma=\frak{G}(k)\cap K$ is a dense subgroup of $K$ (by SAT), but a discrete subgroup, in fact a lattice, in $\frak{G}(k_{\nu_0})$. Assume further that $\Gamma$ satisfies the congruence subgroup property (CSP for short, see Section 9.5 in \cite{platonov1993algebraic}). I.e., every finite index subgroup $\Gamma_0$ of $\Gamma$ contains a subgroup of the form $G(K)\cap K_0$, where $K_0$ is an open compact subgroup of $K$. Combining with SAT, this implies that $\widehat \Gamma\cong K$.

\begin{prop}\label{prop:open_compact_at_2_implies_vanishing}
    With the above assumptions; if $\frak{G}(k_2):=\prod_{\nu|2}\frak{G}(k_\nu)$ has an open compact subgroup $K_2$ with $H^1(K_2,\FF_2)=0$, then $\frak{G}(k_{\nu_0})$ has infinitely many cocompact (torsion free) lattices $\Gamma_i$ with $H^1(\Gamma_i,\FF_2)=0$.
\end{prop}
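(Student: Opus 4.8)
The plan is to produce the lattices $\Gamma_i$ as congruence-type subgroups cut out at a place over $2$ by the good compact $K_2$, and then leverage CSP together with strong approximation to compute their profinite completions. First I would fix the place $\nu_0 \not| \, 2$ as in the setup, so that $\frak{G}(k_{\nu_0})$ is non-compact, and I would also fix, at each place $\nu \mid 2$, the open compact subgroup $K_2 = \prod_{\nu \mid 2} K_{2,\nu}$ satisfying $H^1(K_2,\FF_2)=0$. For any choice of open compact subgroup $K^{(2)}$ of $\frak{G}(\mathbb{A}_{k\setminus\{\nu_0,\infty,\,2\}})$ — i.e.\ over the non-archimedean places away from $\nu_0$ and away from $2$ — one sets $K = K_2 \times K^{(2)}$ and $\Gamma = \frak{G}(k) \cap K$. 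By the standing assumption that $\frak{G}(\mathbb{A}_k^\infty)$ is compact and by SAT, $\Gamma$ is a cocompact lattice in $\frak{G}(k_{\nu_0})$, and by CSP combined with SAT its profinite completion is $\widehat{\Gamma} \cong K$.

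The key point is then the cohomology computation: by Fact \ref{fact:vanishing_of_cohomology}, $H^1(\Gamma,\FF_2)=0$ is equivalent to $\widehat{\Gamma} \cong K = K_2 \times K^{(2)}$ having no continuous quotient of order $2$, equivalently $H^1(K,\FF_2)=0$. Since $H^1$ of a direct product with $\FF_2$ coefficients splits as $H^1(K_2,\FF_2)\oplus H^1(K^{(2)},\FF_2)$, the contribution from the places over $2$ vanishes by hypothesis on $K_2$. So I must also arrange $H^1(K^{(2)},\FF_2)=0$; here one uses that for all but finitely many non-archimedean places $\nu$ the group $\frak{G}$ is quasi-split and unramified, so the hyperspecial maximal compact $\frak{G}(\mathcal{O}_\nu)$ surjects onto the finite group of Lie type $\frak{G}(\mathbb{F}_{q_\nu})$, which (being simply connected, hence generated by its unipotents / a product of quasi-simple pieces) is perfect for $q_\nu$ large enough, giving $H^1(\frak{G}(\mathcal{O}_\nu),\FF_2)=0$; at the remaining finitely many bad places one just checks directly that the chosen $K_\nu$ has no index-$2$ subgroup, or shrinks $K^{(2)}$ to a deep enough congruence subgroup of a pro-$p$ group with $p$ odd, which has no $\FF_2$-quotient. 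To get \emph{infinitely many} pairwise non-isomorphic (and torsion-free) $\Gamma_i$, vary $K^{(2)}$ through a descending chain of principal congruence subgroups: this yields lattices of unbounded covolume, hence infinitely many isomorphism classes, and deep enough congruence subgroups are torsion free by Minkowski-type arguments (or by Selberg's lemma applied inside $K_{\nu_0}$). Finally, feeding any such $\Gamma_i$ into the construction $\cX_i = {}_{\Gamma_i}\backslash^{\cB}$ and invoking that $\cB$ is contractible gives $H^1(\cX_i,\FF_2) = H^1(\Gamma_i,\FF_2)=0$, as required.

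The main obstacle I anticipate is not the soft structure (SAT, CSP, contractibility of $\cB$ are all quoted), but rather the \emph{bookkeeping at the bad places}: one needs a genuine example — a specific $k$, $\frak{G}$, and place(s) over $2$ — where an open compact $K_2 \le \frak{G}(k_2)$ with $H^1(K_2,\FF_2)=0$ actually exists, and simultaneously the archimedean places are compact (forcing $\frak{G}$ to be a suitable inner form, e.g.\ of type $A_{d}$ coming from a division algebra, anisotropic at infinity) while $\nu_0$ keeps $\frak{G}(k_{\nu_0})$ non-compact of rank $\ge 3$. So the crux is producing, or at least citing, such an arithmetic datum, together with the verification that the relevant finite groups of Lie type are perfect and that CSP is known in the needed cases (which, for higher rank, it is, by Bass–Milnor–Serre, Raghunathan, Rapinchuk, etc.). Once that datum is in hand, the rest is the routine splitting-of-$H^1$ argument sketched above, so I would devote most of the write-up to pinning down the example and the perfectness/congruence input, and keep the profinite-completion and cohomology-splitting steps brief.
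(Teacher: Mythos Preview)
Your proposal is correct and matches the paper's proof in overall architecture: take $K = K_2 \times K'$, identify $\widehat{\Gamma} \cong K$ via SAT plus CSP, kill $H^1(K',\FF_2)$ by passing to odd pro-$p$ subgroups at the finitely many troublesome places away from $2$, and then shrink along a chain at a single odd place to produce infinitely many lattices. The one point where you diverge is the justification that only finitely many local factors $K_\nu$ can have $H^1(K_\nu,\FF_2)\neq 0$: you invoke structure theory (almost everywhere $\frak{G}$ is unramified, the hyperspecial compact surjects onto a perfect finite group of Lie type with pro-$p$ kernel for odd $p$), whereas the paper argues more softly that $\Gamma$, being a cocompact lattice, is finitely generated, hence so is $K=\widehat\Gamma$ as a profinite group, so $\dim H^1(K,\FF_2)<\infty$ and only finitely many factors can contribute. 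Both routes are valid; the paper's is quicker and sidesteps any appeal to the classification, while yours pins down explicitly which places are bad. Your closing paragraph about exhibiting a concrete arithmetic datum is on target but lies outside the scope of this proposition --- the paper supplies exactly such an example immediately afterward, using a quaternionic form of $Sp(2n)$ with $K_2 = Sp(2n,\ZZ_2)$.
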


\begin{proof}
    In the notations above, choose $K=K_2\times K'$, where $K'$ is an open compact subgroup of $\frak{G}(\mathbb{A}_{k\setminus\{2,\nu_0,\infty\}})$ of the form
     \[
    K'=\prod_{\nu\notin\{\infty, 2,\nu_0\}}K_\nu,
    \]
    and each $K_\nu$ in the product is an open compact subgroup of $\frak{G}(k_{\nu})$.
    Let $\Gamma = \frak{G}(k)\cap K$, so $\Gamma$ is a lattice in $\frak{G}(k_{\nu_0})$ and $\widehat \Gamma\cong K$. 

    Now, $\Gamma$ being a cocompact lattice in $\frak{G}(k_{\nu_0})$ is finitely generated, and thus $K$ is a finitely generated profinite group. Hence, $\dim H^1(K,\FF_2)=\dim(\textrm{Hom}(K,\FF_2))$ is finite, i.e., for only finitely many $\nu$'s it can happen that $H^1(K_\nu,\FF_2)\neq 0$. Each of these $K_\nu$'s is a virtually (torsion free) pro-$p$ group, where $p$ is some odd prime. Replace $K_\nu$ by a deep enough finite index subgroup which is (torsion free) pro-$p$ to ensure that  $H^1(K_\nu,\FF_2)= 0$. Now $H^1(K,\FF_2)= 0$. Fixing some $\nu'\notin\{\infty, 2, \nu_0\}$ which lies over an odd rational prime $\ell$,  choose a sequence of decreasing pro-$\ell$ subgroups $K_{\nu'}(i)$ of $K_{\nu'}$ and let 
    \[
        K(i)=K_2\times K_{\nu'}(i)\times \prod_{\nu\notin\{\infty, 2,\nu_0,\nu'\}}K_\nu.
    \]
   All in all, if $\Gamma(i)=K(i)\cap \frak{G}(k)$, they all satisfy $\widehat {\Gamma(i)}=K(i)$, and their first cohomology with $\FF_2$ coefficients vanishes.
\end{proof}

Let us now construct an example that satisfies all the needed assumptions\footnote{This example will be used again in Section \ref{sec:cocyc_exp_lattices_sofic}}. Let $D$ be a quaternion algebra over $\mathbb{Q}$ which ramifies at $\infty$ and at some rational prime $p_0\neq 2$ (cf. \cite{vigneras1980algebres} or \cite{platonov1993algebraic}). Let $\tau$ be the standard involution of $D$ and $h\colon D^n\times D^n\to D$ the canonical sesqui-linear form on $D^n$, i.e., 
\[
h((x_1,...,x_n),(y_1,...,y_n))=\sum_{i=1}^n x_i \tau (y_i).
\]
Let $\frak{G}$ be the algebraic group $SU(n,D,h)$, i.e., the $D$-linear transformations preserving $h$. This is a $\QQ$ algebraic group which is simple, connected and simply connected, of type $C_n$, i.e., it is a form of $Sp(2n)$. See the discussion in Section 5 of  \cite{DGLT} (note that there $p_0=2$, but all the results are the same, till the last point), as well as in \cite{Ghola}, and also Section 2.3 in \cite{platonov1993algebraic}.
If $p\neq p_0,\infty$, then $\frak{G}$ splits over $\mathbb{Q}_p$ and $\frak{G}(\QQ_p)\cong Sp(2n,\QQ_p)$. In particular, for $p=2$, $\frak{G}(\QQ_2)$ contains an open compact subgroup $K_2=Sp(2n,\ZZ_2)$.
For $n\geq 3$,   $Sp(2n,\ZZ_2)$ is a perfect group.\footnote{This follows from the well known fact that its dense subgroup $Sp(2n,\ZZ)$ is perfect. In fact, for both it can be read directly from their Steinberg presentation (cf.\ \cite{digne2022braid}).} 
 Hence, it cannot have abelian quotients,
 and in particular $H^1(K_2,\FF_2)=0$. In addition, for every $p\neq p_0$, we get as before many arithmetic lattices $\Gamma$ in $\frak{G}(\QQ_p)$. By \cite{rapinchuk1989congruence} and \cite{tomanov1989congruence} all these $\Gamma$'s satisfy the CSP. Using Proposition \ref{prop:open_compact_at_2_implies_vanishing} and Theorem \ref{thm:cor_of_EK} , we constructed an infinite families of bounded degree cocycle expanders with vanishing first cohomology, which are in turn $1$-coboundary expanders with $\FF_2$ coefficents, resolving Problem \ref{prob:coboundary_expanders_F2} in dimension $2$.

\begin{rem}
    \ 
    \begin{enumerate}
        \item One can deduce the following from the proof: If $G$ is a simple Lie group with one arithmetic lattice $\Gamma$ satisfying CSP and $H^1(\Gamma,\FF_2)=0,$ then $G$ has infinitely many lattices with covolume tending to infinity and vanishing first cohomology with $\FF_2$ coefficients. 
        \item The previous clause should be compared with the main results of \cite{lubotzky1987finite}, which asserts that every finitely generated not solvable linear group and in particular the above $\Gamma$ has a sequence of finite index subgroups $\Gamma^j$ with $\dim H^1(\Gamma^j,\FF_2)\xrightarrow{j\to \infty}\infty.$
        \item In the above --- both in Proposition \ref{prop:open_compact_at_2_implies_vanishing} and the previous two clauses --- we can replace $2$ by any fixed rational prime $\ell$.
    \end{enumerate}
\end{rem}

\section{\textbf{Open problems and suggestions for further research}} \label{sec:open_problems}

\subsection{\textbf{Cocycle expansion in permutations of  $p$-adic lattices implies the existence of non-sofic groups}} 
\label{sec:cocyc_exp_lattices_sofic}

In this paper we are following the footsteps of the theory developed for coboundary expansion over $\FF_2$ with the goal to generalize it to $\Sym$. In Section \ref{sec:Examples}, we analyzed the complete complexes and spherical buildings. In Section \ref{sec:large_cosystols}, we studied the large cosystoles condition with permutation coefficients, and related it to property $(\tau)$ and Garland's method. In particular, our analysis applies to quotients of Bruhat--Tits buildings of simple $p$-adic Lie gropus of high rank. The natural next step would be to try to prove \textbf{cocycle} expansion with permutation coefficients for such quotients.
Following the foot steps of \cites{kaufman2016isoperimetric,evra2016bounded,dikstein2023coboundary}, one may dare to suggest:
\begin{conj}\label{conj:Sym_mimics_F2}
    Let $G$ be  simple $p$-adic Lie group of rank $d\geq 3$, and $p\geq p(d)\gg0$. Then, for  any lattice $\Gamma$   of $G$,
    \begin{equation}\label{eq:conj_sym_F2}
    h_1({}_\Gamma\setminus^\cB,\Sym)>0,    
    \end{equation}
    where $\cB$ is the Bruat--Tits building associated with $G$.
\end{conj}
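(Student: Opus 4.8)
The natural plan is to transplant the $\FF_2$-coefficient machinery of Kaufman--Kazhdan--Lubotzky and Evra--Kaufman to permutation coefficients. Proving $h_1({}_\Gamma\setminus^\cB,\Sym)>0$ uniformly over $\Gamma$ is exactly proving a uniform linear cocycle stability rate in dimension $1$ with $\Sym$ coefficients. Note that the companion ``cosystole'' half of the Dotterrer--Kaufman--Wagner package is already in hand here: for $p$ large $\cB$ is a $\lambda$-local spectral expander with $\lambda<\tfrac{1}{2}$, so Corollary \ref{cor:local_exp_implies_exp_of_coverings} gives $CoSyst_1({}_\Gamma\setminus^\cB,\Sym)\geq \tfrac{1-2\lambda}{2-2\lambda}$ uniformly in $\Gamma$; hence proving the conjecture would, together with the permutation analogue of Theorem \ref{thm:intro_DKW}, also yield bounded-degree topological expanders ``in permutations.'' But the conjecture itself asks only for cocycle expansion.

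To get it, I would aim at a \emph{local-to-global} theorem in the spirit of Garland's method, adapted to cocycles with permutation coefficients: if every vertex link $\cX_v$ of $\cX={}_\Gamma\setminus^\cB$ is a coboundary expander with permutation coefficients, with a constant independent of $v$ and $\Gamma$, and $\cX$ has a good enough one-sided spectral gap, then $h_1(\cX,\Sym)>0$. The hypothesis on the links is within reach: the vertex links of $\cB$ are spherical buildings (of type $A_{d-1}$, or whatever type $G$ dictates), and, as the remark following Proposition \ref{prop:building_stable} explains, the color-restriction technique of Dikstein--Dinur \cite{dikstein2023coboundary} --- run with the normalized Hamming metric in place of the discrete distance --- should show that spherical buildings of every bounded type are coboundary expanders with permutation coefficients, with a constant independent of the underlying field (hence of $\Gamma$). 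So the steps are: (i) carry out color restriction over $\Sym$, obtaining uniform coboundary expansion of the relevant spherical buildings; (ii) prove the non-abelian local-to-global statement above; (iii) combine (i), (ii) and the cosystole bound.

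The crux is step (ii). In the $\FF_2$ setting the passage from link expansion to global cocycle expansion is genuinely linear-algebraic: one averages cochains over links, projects onto the coboundary subspace, and runs a quadratic-form (Garland) estimate in $L^2$. None of this survives for $\Sym$ --- there is no averaging of tuples of permutations and no orthogonal complement of $B^1(\cX,\Sym)$. The most promising substitute is to argue on the coverings side via the dictionary of Section \ref{sec:prelim}: a $1$-cochain $\alpha$ with $\Vert\delta\alpha\Vert$ small encodes a covering of $G(\cX)$ in which most triangles lift to closed paths, and one wants it Hamming-close to a genuine covering of $\cX$. Coboundary expansion of each link lets one repair the restriction of $\alpha$ near a vertex by moving only a small fraction of edges; the obstruction is \emph{consistency} --- patching these local repairs across overlapping links without them fighting each other, which in the abelian case is precisely what the spectral gap controls and which here must be re-established by a direct random-walk / mixing argument on $\cX$.

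As a hedge one could first attempt the weaker statement with coefficients in a \emph{fixed} finite non-abelian group instead of in $\bigsqcup_n\Sym(n)$, since a fixed group restores some finiteness that might make the patching tractable, and even that would already have consequences of Gohla--Thom type. But the honest target is the full $\Sym$ statement, and devising the right non-abelian Garland-type lemma --- or showing that color restriction by itself already propagates to global cocycle expansion, as it does for the buildings themselves --- is where the real difficulty lies.
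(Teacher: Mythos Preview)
This statement is a \emph{conjecture}, and the paper does not prove it; immediately after stating it the authors write ``Unfortunately, we are currently not able to prove the above.'' So there is no proof in the paper to compare your proposal against, and your submission is not a proof either --- as you yourself acknowledge, it is a research plan whose ``crux is step (ii).''

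That said, your plan is exactly the direction the paper itself points to: in Section~\ref{sec:cocyc_exp_lattices_sofic} the authors express the hope that the techniques of \cite{evra2016bounded} and \cite{dikstein2023coboundary} ``may lead to a positive resolution of Conjecture~\ref{conj:Sym_mimics_F2}.'' Your decomposition into (i) link expansion via color restriction, (ii) a non-abelian local-to-global theorem, and (iii) gluing with the cosystole bound is the natural one, and you have correctly isolated the genuine obstruction: step (ii) is open because the Garland/Evra--Kaufman machinery is fundamentally linear-algebraic (averaging cochains, orthogonal projections in $L^2$), and no replacement for this is known over $\Sym$ with the Hamming metric. The paper also flags the ``warning signs'' you omit --- the Becker--Lubotzky result that property~(T) groups are never strictly stable, and the Ioana obstruction for $SL_n(\ZZ)$ --- which suggest that any such non-abelian local-to-global lemma, if it exists, will not be a routine adaptation. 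Your hedge of first attempting a fixed finite non-abelian coefficient group is reasonable, but note that Dikstein--Dinur \cite{dikstein2023coboundary} already handle general groups with the \emph{discrete} metric; the difficulty is specifically the Hamming metric, which allows permutations to be ``almost equal,'' and this is what breaks the patching argument regardless of whether the group is fixed or growing.
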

Unfortunately, we are currently not able to prove the above. Proving such a result, even for very special cases can be  very significant. Recall:

\begin{defn}
    A finitely presented group $\Gamma\cong \langle S|R\rangle$ is said to be \emph{sofic} if there is a sequence of functions $f_n\colon S\to \Sym(n)$,  such that 
    \[
      \forall r\in R\ \colon \ \ d_h(f_n(r),\Id)\xrightarrow{n\to \infty}0,
    \]
    and for every $w\notin \langle\langle R\rangle \rangle$,\footnote{$\langle\langle R\rangle \rangle$ is the normal subgroup generated by $R$.}
    \[
        \liminf(d_h(f_n(w),\Id))\geq \frac{1}{2}.
    \]
\end{defn}
\begin{problem}[Gromov \cite{Gromov_sofic}, Weiss \cite{Weiss_Sofic}] \label{prob:sofic_groups}
    Are there non-sofic groups?
\end{problem}

\begin{thm}\label{thm:Gohla_Thom}
   Conjecture \ref{conj:Sym_mimics_F2} implies the existence of non-sofic groups
\end{thm}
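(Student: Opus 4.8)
The plan is to use Conjecture~\ref{conj:Sym_mimics_F2} to produce a finitely presented group $\Lambda$ that is homomorphism-stable in permutations but \textbf{not residually finite}; no such group can be sofic, which proves the theorem. Indeed, suppose such a $\Lambda\cong\langle S\mid R\rangle$ were sofic, witnessed by $f_n\colon S\to\Sym(n)$. Reading each $f_n$ as a $1$-cochain on $\cX_{\langle S\mid R\rangle}$, the first soficity condition is exactly $\Vert\delta f_n\Vert\to 0$, so permutation stability supplies genuine homomorphisms $\varphi_n\colon\Lambda\to\Sym(n)$ with $d_h(f_n(s),\varphi_n(s))\to 0$ for all $s\in S$, hence $d_h(f_n(w),\varphi_n(w))\to 0$ for every fixed $w\in\Lambda$. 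For $w\neq 1$ the second soficity condition gives $\liminf d_h(f_n(w),\Id)\geq\tfrac12$, so $\varphi_n(w)\neq\Id$ for all large $n$, i.e.\ $w$ is separated from $1$ by a finite quotient of $\Lambda$ --- contradicting non-residual-finiteness. So it is enough to build such a $\Lambda$.

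First I would record that, granting the conjecture, the lattices underlying Section~\ref{sec:two_dim_coboundary_expanders_F2} are uniformly permutation-stable. Let $\frak G$, $k$, $\nu_0$ and $\Gamma_i<\frak G(k_{\nu_0})$ be as in Proposition~\ref{prop:open_compact_at_2_implies_vanishing}: $d:=\mathrm{rank}\,\frak G\geq 3$, every $\Gamma_i$ satisfies the congruence subgroup property, and $H^1(\Gamma_i,\FF_2)=0$; we may choose the place $\nu_0$ over an arbitrarily large rational prime, so that $\frak G(k_{\nu_0})$ is a simple $p$-adic group of rank $d$ with $p$ as large as we like. Let $\cB$ be its building and $\cX_i$ the $2$-skeleton of ${}_{\Gamma_i}\backslash^\cB$. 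Since attaching cells of dimension $\geq 3$ does not change the fundamental group, $\pi_1(\cX_i,*)\cong\Gamma_i$, so $\cX_i$ behaves like a presentation complex of $\Gamma_i$ as far as $1$-dimensional cocycle stability is concerned, and $h_1(\cX_i,\Sym)=h_1({}_{\Gamma_i}\backslash^\cB,\Sym)$ because this quantity depends only on $\mu_1,\mu_2$ (Remark~\ref{rem:omitting_mu_from_notation}). By Conjecture~\ref{conj:Sym_mimics_F2} this is $\geq c>0$, uniformly in $i$; hence, by the equivalence (Part~I \cite{CL_part1}) between a positive first cocycle Cheeger constant and a linear homomorphism-stability rate, each $\Gamma_i$ is permutation-stable with the uniform rate $\rho(\eps)=\eps/c$ --- and a mere stability rate, not necessarily linear, would already suffice for what follows.

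Next I would construct $\Lambda$ along the lines of De Chiffre--Glebsky--Lubotzky--Thom \cite{DGLT}. Fix a large $i$, write $\Gamma:=\Gamma_i$, and form a central extension
\[
1\longrightarrow A\longrightarrow\Lambda\xrightarrow{\ q\ }\Gamma\longrightarrow 1
\]
with $A$ a finite cyclic group (or $\ZZ$), whose class $\omega\in H^2(\Gamma,A)$ is nonzero but restricts to $0$ on the preimage of every proper congruence subgroup of $\Gamma$; such classes exist for the $S$-arithmetic groups at hand, and the vanishing $H^1(\Gamma,\FF_2)=0$ is used here to guarantee that the extension is genuinely nontrivial (not a split direct factor, which would keep $\Lambda$ residually finite). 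Then $\Lambda$ is finitely presented, and since --- by CSP --- every finite-index subgroup of $\Lambda$ is a congruence subgroup on which $\omega$ dies, no finite quotient of $\Lambda$ separates $A$ from the identity: $\Lambda$ is \textbf{not residually finite}. It remains to see that $\Lambda$ is still permutation-stable. A presentation of $\Lambda$ is obtained from one of $\Gamma$ by perturbing the relations with a single $A$-valued $2$-cocycle, and one argues --- via the long exact sequence relating the second ``stability obstruction'' of $\Lambda$ to that of $\Gamma$ (which vanishes by the previous paragraph), together with the fact that a central cyclic subgroup is trivially stabilised inside symmetric groups --- that $\Lambda$ inherits permutation stability. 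Combining with the first paragraph, $\Lambda$ is a non-sofic group.

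The main obstacle is this last step: showing that a central extension of a permutation-stable group, chosen as above, is again permutation-stable. In the unitary/Frobenius setting of \cite{DGLT} this is governed by a clean $H^2$-vanishing condition, whereas (normalised-Hamming) permutation stability has no comparable criterion and is not, in general, inherited by extensions. Carrying this out requires developing the appropriate \emph{relative} obstruction theory within the $L^1$/Hamming framework of Part~I \cite{CL_part1} --- in effect lifting the cocycle/coboundary-expansion dictionary one dimension up --- and verifying it survives a central cyclic extension. Everything else is either standard (the soficity reduction), a direct consequence of the conjecture together with Part~I (uniform stability of the $\Gamma_i$), or already contained in Section~\ref{sec:two_dim_coboundary_expanders_F2} and \cite{DGLT} (the non-residual-finiteness of $\Lambda$).
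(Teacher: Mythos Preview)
Your reduction in the first paragraph (permutation-stable $+$ not residually finite $\Rightarrow$ not sofic) is correct and standard, and your second paragraph correctly extracts from the conjecture that the relevant lattice $\Gamma_0$ is permutation-stable with linear rate. This is exactly the first step of the paper's proof.

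The genuine gap is your third and fourth paragraphs. You try to prove that the central extension $\Lambda=\tilde\Gamma_0$ is itself permutation-stable, then apply the reduction to $\Lambda$. You yourself flag this as ``the main obstacle'', and it is: there is no known inheritance of permutation (Hamming) stability under central extensions, and your sketch via a ``long exact sequence relating the second stability obstruction'' has no content in this setting --- the DGLT $H^2$-vanishing mechanism works for the Frobenius norm precisely because stability there is characterised cohomologically, which fails for permutations. So the argument, as written, does not close.

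The paper does \emph{not} attempt this step at all. Its proof observes that linear homomorphism stability implies \emph{flexible} stability of $\Gamma_0$, and then simply cites the Gohla--Thom result \cite{Gohla_Thom}: for $n\geq 4$, if $\Gamma_0$ is flexibly stable in permutations, then the Deligne-type central extension $\tilde\Gamma_0$ is non-sofic. The point is that Gohla--Thom argue directly that a putative sofic approximation of $\tilde\Gamma_0$ can be pushed down to $\Gamma_0$, corrected there using flexible stability, and that this correction is incompatible with the behaviour of the central element --- without ever establishing stability of $\tilde\Gamma_0$. In other words, the black box you would need (stability of the extension) is replaced by a different black box (their direct non-soficity argument from stability of the base).

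Two smaller points: the role you assign to $H^1(\Gamma,\FF_2)=0$ is spurious here --- that vanishing was used in Section~\ref{sec:two_dim_coboundary_expanders_F2} for $\FF_2$-coboundary expansion and plays no part in the non-residual-finiteness of $\tilde\Gamma_0$; and your description of the extension class (``restricts to $0$ on the preimage of every proper congruence subgroup'') is not quite the Deligne mechanism, which is rather that the finite centre of $\tilde\Gamma_0$ lies in the kernel of the map to the profinite completion.
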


\begin{proof}
    Recall that if $h_1({}_\Gamma\setminus^\cB,\Sym)>0,$ then  $\Gamma$ is homomorphism stable with linear rate (see Theorem 1.3 in  \cite{CL_part1}) --- which is known to imply  flexible stability in permutations. 

    Let $D$ be a quaternion algebra over $\QQ$, which ramifies at infinity and some rational prime $p_0$, as in Section \ref{sec:two_dim_coboundary_expanders_F2}. This time, we may take $p_0=2$. Let $\Gamma_0$ be the lattice in $Sp(2n,\QQ_p)$ obtained from such a $D$ as in Section \ref{sec:two_dim_coboundary_expanders_F2}, where $p\neq p_0$ (see \cite{DGLT}, where such $\Gamma_0$ is constructed with $p_0=2$).
    As in \cite{DGLT}, by adapting Deligne's method to the $p$-adic numbers, $\Gamma_0$ has a central extension $\tilde \Gamma_0$ with finite center and $\tilde \Gamma_0$ is not residually finite. 
    Now, in a paper in preparation of Gohla--Thom \cite{Gohla_Thom} (see also Corollary 3.4.5 in \cite{Ghola}), they show that for $n\geq 4$, if $\Gamma_0$ is flexibly stable, then $\tilde \Gamma_0$ is a non-sofic group.
\end{proof}

We should emphasize, that what we need is much weaker than the full force of Conjecture \ref{conj:Sym_mimics_F2}. First, we need it only for the group $G=Sp(2n,\mathbb{Q}_p)$, where $n$ is some fixed number larger than $4$, and $p$ a fixed prime, large enough with respect to $n$ (see \cite{Gohla_Thom}). Even then, it suffices to prove \eqref{eq:conj_sym_F2} for a single lattice $\Gamma_0$ and not all of them. Moreover, we do not need a linear homomorphism stability rate but any rate would suffice. 

Let us put our observation in perspective: The existence of non-sofic groups is a long standing problem, and many examples of groups have been suggested as candidates for being non-sofic. For example, finitely generated infinite simple groups \cite{burger2000lattices}, the Burnside groups, the Higman group (cf. Proposition 6 in Section 1.1 of \cite{Trees_Serre}) and others. 
But, as of now, these efforts have not led to a path towards a proof \cites{thom2018finitary,helfgott2019soficity,kassabov2019soficity}.

The work of Bowen--Burton \cite{BowenBurton} suggested a new path. They showed that if $SL_n(\ZZ)$, for $n\geq 5$, is homomorphism stable then there exists a non-sofic group. There are good reasons to believe that their methodology can work for many high rank arithmetic groups other than $SL_n(\ZZ)$, including the lattices discussed in \cite{Gohla_Thom}. 
The properties of $SL_n(\ZZ)$ used in \cite{BowenBurton} are: 
\begin{enumerate}
    \item The congruence subgroup property (CSP).
    \item Existence of two copies of the same non-abelian free group, one  Zariski-dense in $SL_n(\ZZ)$, and the other Zariski-dense in a proper simple algebraic subgroup. 
    \item Super strong approximation of $SL_n(\ZZ)$ with respect to \textbf{all} its congruence subgroups.
\end{enumerate}
Properties $(1)$ and $(2)$ hold for  many other arithmetic groups $\Gamma$, but $(3)$ is not yet known. An ongoing work of Alireza Salehi Golsefidi and others aims to close this gap. Anyway, for our goal the main missing point is to show that $\Gamma$ is homomorphism stable.

Gohla--Thom \cite{Gohla_Thom} take a somewhat different path: Let $\Gamma_0$ be as in the proof of Theorem \ref{thm:Gohla_Thom} (or one of the lattices constructed in Section \ref{sec:two_dim_coboundary_expanders_F2}).  In \cite{DGLT}, the stability of homomorphisms of $\Gamma$ into the unitary group with the Frobenius norm, which was deduced from cohomology vanishing results of Garland \cite{garland1973p}, was used to deduce that $\tilde \Gamma_0$ is not Frobenius approximated (which is  analogous to non-soficity). By imitating this pattern, \cite{Gohla_Thom}  prove that if $\Gamma_0$ is homomorphism stable in permutations then $\tilde \Gamma_0$ is non-sofic.  

Our observation here suggests a technical tool to prove the missing homomorphism stability of this $\Gamma_0$: In \cite{evra2016bounded}, the $\FF_2$ analogue of Conjecture \ref{conj:Sym_mimics_F2} was proved. Furthermore, the version of Conjecture \ref{conj:Sym_mimics_F2} with permutation coefficients, but with respect to the discrete distance instead of the Hamming one, was proved in \cite{dikstein2023coboundary}. We hope that the techniques developed in these papers may lead to a positive resolution of Conjecture \ref{conj:Sym_mimics_F2}, if not in general, then at least for some specific cases, such as $\Gamma_0$, which imply the existence of non-sofic groups.

Although we are optimistic, there are several warning signs along the way which one should consider. First, in \cite{BeckerLubotzky}, it was shown that property $(T)$ groups never have a strict version of homomorhpism stability. Furthermore,   in \cite{ioana2020cohomological} it was shown that  $SL_n(\ZZ)$, where $n\geq 3$, does not have the local lifting property, which is another strngthening of homomorphism stability into unitaries with the Hilbert--Schmidt norm. Lastly, in \cite{glebsky2022extensions} it is shown that the central extension $\tilde \Gamma_0$ above   \textbf{is}  weakly sofic,  which is, as it sounds, a weakening of the notion of a sofic group. This means that our cnadidates for being non-sofic are actually weakly sofic!

\subsection{Bounded expanders in permutations}\label{sec:generalized_cosystolic_expansion}

The following problems are the natural generalizations of the search for bounded coboundary and cocycle expanders with $\FF_2$ coefficients (Problem \ref{prob:coboundary_expanders_F2}) to the permutation setup. Namely, we are looking for infinite families of connected finite polygonal complexes $\{\cX_m\}_{m=1}^\infty$, which are uniformly bounded, namely there is a positive integer $k$ such that the degree of every vertex in $\cX_m$ is bounded by $k$.
 
\begin{problem}[Bounded coboundary expanders in permutations] \label{problem:Bounded_cobundary_exp_in_Sym}
    Is there such a family for which 
    \[
\forall m\in \mathbb{N}\ \colon \ \ 
 h^B_0(\cX_m,\Sym),h^B_1(\cX_m,\Sym)\ge \lambda,
    \]
    for some $\lambda>0$ independent of $m$?
\end{problem}

This is the permutation analogue of what was constructed in Section \ref{sec:two_dim_coboundary_expanders_F2} for $\FF_2$. A potentially easier task is the following:
\begin{problem}[Bounded cocycle expanders with large cosystoles in permutations] \label{prob:6.6.}
    Is there such a family for which
    \[
\forall m\in \mathbb{N}\ \colon \ \ h_0(\cX_m,\Sym),h_1(\cX_m,\Sym),CoSyst_1(\cX_m,\Sym)\ge \lambda,
\]
where $\lambda>0$ is independent of $m$?
\end{problem}
 Solving Problem \ref{prob:6.6.} will be the exact Sym analogue of \cites{kaufman2016isoperimetric} with $\FF_2$ coefficients.

\subsection{Random models}
Both random simplicial complexes and random group presentations received significant attention in the recent two decades (cf. \cites{Ollivier2005AJ2,ollivier2010random,kahle2016random} and the references therein). It is natural to ask what is the expected stability rate of a random object. It turns out that these kinds of enquiries are closely related to Problem \ref{prob:sofic_groups}, namely, finding non-sofic groups. A companion paper to this one \cite{CL_stability_Random_complexes} is devoted to  the study of cocycle expansion of random simplicial complexes in the  Linial--Meshulam model \cite{linial_meshulam2006homological}. In that paper, it is proved that if the expected cocycle Cheeger constant with permutation coefficients of a random simplicial complex in the \emph{mid-range} is bounded away from zero, then there are non-sofic hyperbolic groups. This would resolve both, the \emph{soficity problem} as well as the \emph{residual finiteness of hyperbolic groups} problem (cf. \cite{kapovich2000equivalence}).\footnote{Compare also to Conjecture 2.8 in \cite{arzhantseva2014asymptotic} and the references therein.} This result is somewhat parallel\footnote{The result in \cite{CL_stability_Random_complexes} is weaker than Dogon's in the sense that a better  stability rate of a random complex is assumed there. But, under these stronger assumptions, the existence of non-sofic \textbf{hyperbolic} groups is deduced.} to a recent work of Dogon \cite{dogon2023flexible}, where he proved that homomorphism stability in unitaries with any rate in the mid-range will imply the existance of non-hyperlinear groups, which are in turn non-sofic. Both results call for further study of the stability rate of random objects. We do suspect that a random object should not be stable in the mid-range, similar to the way a random code is not locally testable \cite{ben2003some}.

 \subsection{Efficient Stability of finite group presentations}\label{sec:efficient_stability}

Short presentations of finite groups were the focus of many studies --- see the introduction of \cite{guralnick2008presentations}. 
Though finding short presentations of a given (family of) finite group(s) is a natural problem, this search seems to be in conflict with these presentations having a (uniformly) good stability rate. For example, the work of Ben-Sasson--Guruswami--Kaufman--Sudan--Viderman \cite{ben2010locally}, which proves that locally testable codes need to be \emph{redundant}, can be interpreted --- in the language developed further in this section --- as a proof that a specific type of short presentations of certain finite abelian groups must have a cocycle Cheeger constant that tends to zero with their rank. On the other hand, the most expansive presentation --- the multipilication table one --- has,  by \cite{BC22}, a  lower bound on its Cheeger constant which is independent of the specific group. This suggests a tradeoff between \emph{shortness} of a presentation and its \emph{stability}. It will be interesting to understand this tradeoff better. 

Let us be more concrete. 
Recall that for every given finite group, and any given presentation of it, its cocycle Cheeger constant in dimension $1$ with permutation coefficients is positive. But, as was proved in Claim \ref{claim:cheeger_arbitrarily_small}, one can choose presentations with a
Cheeger constant  arbitrarily  close to $0$.
Consider a family of finite groups $G_i$, given by presentations $\langle S_i|R_i\rangle$. The goal is  to relate the complexity of these presentations and their  stability rate  in a uniform manner. Here, complexity can have several interpretations: It can be taken, for example, to be $\log|S|\cdot \sum_{r\in R}|r|$, where $|r|$ is the length of the relation $r$, or to be the bit length of an encoding of the presentation (which can be shorter due to the fact exponents can be written in binary and not unary). 

For a first example, take the groups $\FF_2^k$ for $k\to \infty$. On the one hand,  their multiplication  table presentations,  which are exponentially more complicated than the shortest possible presentations, have a cocycle Cheeger constant of at least $\frac{1}{3000}$ regardless of $k$. 
On the other hand, the much shorter presentation on $k$ generators, where  every generator is required to be of order $2$, and every pair of generators are required to commute, have 
 cocycle Cheeger constants which are  $O(\frac{1}{k})$ (see \cites{kozlov2019quantitative,CVY_efficient}). Furthermore, in \cite{ben2010locally} it is shown that if $\FF_2^k$ is presented as a quotient of $\FF_2^n$ (equipped with the above short presentation) and one adds only $n-k$ new relations and the associated code has linear distance, then the Cheeger constant  tends to zero with $k\to \infty.$ 

The second natural example to study, which is completely open, is that of finite simple groups. Clearly, studying the stability rate of various standard presentations of the alternating and symmetric groups seem to be a natural problem. But, we want to highlight the following case: The groups $SL_n(\FF_p)$ (and more generally, groups of Lie type) have two outstanding families of presentations ---
\begin{enumerate}
    \item \textbf{The Steinberg presentations}: Let $e_{ij}(a)$ be the matrices with $a\in \FF_p$ in the $ij^{\rm th}$ position, and $0$ evereywhere else.  For every $i\neq j$ and $a\in \FF_p$, let $U_{ij}(a)=\Id +e_{ij}(a)$. Take the $U_{ij}(a)$'s as generators, and the relations to be $U_{ij}(a)U_{ij}(b)=U_{ij}(a+b)$ and for every $i\neq j, k\neq \ell$ and $i\neq \ell$, $[U_{ij}(a),U_{k\ell}(b)]={\bf 1}_{j=k} U_{i\ell}(a\cdot b)$. The complexity of this presentation is $\Omega(p)$, which is polynomial in the size of the group --- namely, an expansive presentation.
    \item \textbf{The Guralnik--Kantor--Kassabov--Lubotzky presentations}: Here, we do not describe the presentations in full, but describe their complexity. There is a constant $C$, such that regardless of $p$ and $n$, the presentations use less than $C$ generators, less than $C$ relations, and the length of the relations is at most $C(\log n+\log p)$. See \cites{guralnick2008presentations,lubotzky2010presentations} for all the details. The complexity of these presentations is logarithmic in the size of the group. Note that when one fixes $p$ and lets $n\to \infty$, these presentations are of complexity which is even $\log\log$ the size of the group.
\end{enumerate}
In the spirit of the more understood abelian case, we conjecture the following:
\begin{conj}
    There is a uniform lower bound on the cocycle Cheeger constant in permutations of the Steinberg presentations, at least when fixing $n$ and tending $p$ to $\infty$. On the contrary,   the cocycle Cheeger constants in permutations of the GKKL presentations tend to zero when either $n$ or $p$ tending to $\infty$.
\end{conj}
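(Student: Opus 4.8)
The plan is to treat the two halves of the conjecture separately, in each case converting the statement about $h_1(\cX_{\langle S|R\rangle},\Sym)$ into a statement about homomorphism stability via the equivalence recalled in Section~\ref{sec:prelim}: a uniform lower bound on the first cocycle Cheeger constant with permutation coefficients across a family of presentations is the same as a uniform \emph{linear} (flexible) homomorphism-stability rate for that family, and a Cheeger constant tending to $0$ is the same as the failure of any uniform linear rate.

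\textbf{The Steinberg direction.} Fix $n\ge 3$ and let $p\to\infty$, and let $f$ be a near-homomorphism of the Steinberg presentation into $\Sym(N)$. The first step is to clean up the root subgroups: each family $\{U_{ij}(a):a\in\FF_p\}$ together with the relations $U_{ij}(a)U_{ij}(b)=U_{ij}(a+b)$ is precisely the multiplication-table presentation of $\ZZ/p$, so by the theorem of Becker--Chapman \cite{BC22} (Cheeger constant $\ge\frac{1}{3000}$, independent of $p$) we may, at cost linear in $\|\delta f\|$, assume each $\phi_{ij}\colon\FF_p\to\Sym(N)$ is an honest homomorphism. The heart of the matter is then the commutator relations $[U_{ij}(a),U_{k\ell}(b)]=\mathbf{1}_{j=k}U_{i\ell}(ab)$. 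The route I would take is local-to-global: each such relation involves only two or three root subgroups, which span a rank-$\le 2$, type-$A_2$ configuration, and the links of the complex attached to this configuration are flag complexes of subspaces of $\FF_p^m$ for small $m$ --- that is, spherical buildings of type $A_2$, which we have just shown (Proposition~\ref{prop:building_stable} and the following remark) to be coboundary expanders with permutation coefficients at a rate independent of $p$. Feeding this local expansion into a Garland-type argument in the normalized Hamming metric (Section~\ref{sec:local_global_exp}), and using the color-restriction technique of Dikstein--Dinur \cite{dikstein2023coboundary} --- transported to the Hamming setting --- to keep the rate from degrading with $n$, one would propagate the local corrections to a global correction of $f$ to an honest representation of $SL_n(\FF_p)$, with constants depending only on $n$.

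\textbf{The GKKL direction.} Here one must exhibit, for every $\eps>0$ and all sufficiently large $n$ or $p$, a $1$-cochain $\alpha$ with $\|\delta\alpha\|<\eps\cdot d(\alpha,Z^1(\cX,\Sym))$. The guiding principle is the ``locally testable codes must be redundant'' philosophy of Ben-Sasson--Guruswami--Kaufman--Sudan--Viderman \cite{ben2010locally}: the GKKL presentations realize $SL_n(\FF_p)$ through a bounded-size ``seed'' amplified by a bounded number of relations of length $\ell=O(\log n+\log p)$, a scheme with essentially no redundancy, and the variety of tuples that approximately satisfy these $O(1)$ relations is genuinely larger than the honest-representation locus. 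Concretely, one would perturb an honest homomorphism on a single generator (or on a single ``digit'' of the logarithmic encoding) by a permutation of small norm; since the honest quotients of $SL_n(\FF_p)$ are severely constrained (its only proper quotients are central, of order dividing $\gcd(n,p-1)$, and its minimal faithful permutation degree is large), one arranges that no honest cocycle of the relevant size can track the perturbation, forcing $d(\alpha,Z^1(\cX,\Sym))\ge c$ for an absolute $c>0$, while the $O(1)$ relations of length $\ell$ absorb only an $O(1/\ell)$-fraction of their mass into the defect. Optimizing the perturbation then drives the ratio below $\eps$ once $\ell$ is large, i.e.\ once $n$ or $p$ is large.

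\textbf{The main obstacle.} The positive (Steinberg) half is by far the harder one, and I expect it to be the true bottleneck: it is, in essence, the permutation-coefficient, normalized-Hamming analogue of the cocycle-expansion results of \cites{evra2016bounded,dikstein2023coboundary} for the type-$A_{n-1}$ building over $\FF_p$ --- precisely the kind of statement flagged as open in Conjecture~\ref{conj:Sym_mimics_F2} and in Section~\ref{sec:cocyc_exp_lattices_sofic}. Garland's method and color restriction are presently available for the discrete distance, and their transfer to the normalized Hamming distance, which behaves very differently under blow-ups, is exactly the missing technical ingredient. Thus a realistic reading of this plan is: the GKKL half should be provable unconditionally by an explicit construction plus a counting/entropy argument in the spirit of \cite{ben2010locally}, while the Steinberg half reduces to --- and is essentially as hard as --- proving permutation-coefficient cocycle expansion for spherical and Bruhat--Tits buildings of type $A_{n-1}$ with the Hamming metric.
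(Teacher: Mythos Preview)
The statement is labeled a \emph{Conjecture} in the paper and is not proved there; it appears in Section~\ref{sec:efficient_stability} as an open problem motivated by analogy with the abelian case. There is thus no paper proof to compare against, and your proposal should be read --- as you yourself concede in the final paragraph --- as a strategy sketch rather than a proof.

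On the sketch itself: the Steinberg direction has a structural mismatch. Conjecture~\ref{conj:Sym_mimics_F2} and the local-to-global machinery of Section~\ref{sec:local_global_exp} concern quotients of Bruhat--Tits buildings by cocompact $p$-adic lattices, whereas the Steinberg presentation is of the \emph{finite} group $SL_n(\FF_p)$; its presentation complex has a single vertex, so vertex links in the sense of Section~\ref{sec:local_global_exp} are not available, and your claim that ``the links \ldots\ are spherical buildings of type $A_2$'' would have to be realized in some auxiliary complex you have not specified (the spherical building of $SL_n(\FF_p)$ is a natural candidate, but the presentation complex is not that building, and passing between them costs control of the Cheeger constant). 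So the reduction you propose is not to Conjecture~\ref{conj:Sym_mimics_F2} as stated, and even heuristically the route is less direct than you suggest. For the GKKL direction, the shape of your heuristic --- a single-generator perturbation giving defect $O(1/\ell)$ against $O(1)$ relations of length $\ell$, while staying $\ge c$ from every cocycle --- is reasonable, but the lower bound $d(\alpha,Z^1(\cX,\Sym))\ge c$ is where the actual work lies: you must rule out not just honest homomorphisms of $SL_n(\FF_p)$ but \emph{all} cocycles into every $\Sym(N)$ with $N\ge n$, and the rigidity of quotients of $SL_n(\FF_p)$ does not by itself control cocycles of larger rank that may absorb the perturbation.
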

\begin{rem}
    Note that the length of the relations in the GKKL presentations is not bounded. This is not natural in the viewpoint of property testing, since it implies a non-bounded query complexity for the tester. Nevertheless, one can study the triangulated versions of  these presentations, and amend this problem.
\end{rem}

We now want to make the tradeoff problem between shortness and stability rate even more concrete. To that end, recall the recent MIP*=RE breakthrough \cite{MIPRE} in quantum information theory, which resolved Connes' embedding problem (cf. \cite{Cap_Lup_Sofic_Hyperlinear_book}). A major technical tool, which was developed and used in this work, is a quantum soundness result on a specific two player non-local game based on the individual degree Reed--Muller code \cite{quantum_soundness_tensor_codes}. As is explained in  \cite{CVY_efficient}, this quantum soundness result can be interpreted as a lower bound on the cocycle stabilty rate of certain presentations of $\FF_2^k$, but with coefficients in unitaries equipped with the Hilbert--Schmidt metric rather than in permutations equipped with the Hamming metric. 
Inspired by this deep result, we suggest the following parameter setting as a  challenge.

\begin{problem}[Efficient stability in permutations]\label{prob:efficient_stability}
    Find a family of finite groups $\{G\}$, with  presentations $G\cong\langle S| R\rangle$, such that:
    \begin{itemize}
        \item \emph{The presentation is (somewhat) short}: 
        \[\begin{split}
            |S|,|R|&=\textrm{polylog}(|G|) \\
            \forall r\in R\ \colon |r|&=\textrm{polylog}(|G|).
        \end{split}
        \]
            \item \emph{The stability rate is good}: $\langle S|R\rangle$ is $\rho$-stable, and $\rho(\eps)=\textrm{polyloglog}(|G|)\cdot \eps^c$ for some $c>0$ independent of $|G|$.
    \end{itemize}
\end{problem}

\begin{rem}\label{rem:on_efficient_stability}
It would be more natural to ask for $\rho(\eps)\leq C\eps^c$ for two universal constants $c,C>0$. But, because of gap amplification techniques such as parallel repetition, the $\textrm{polyloglog}(|G|)$ term is good enough  for applications.
\end{rem}

The motivation for this parameter setting is that it allows for \emph{randomness generation} in certain contexts --- see the way
 \cite{quantum_soundness_tensor_codes} is used in \cite{MIPRE}.  Though Problem \ref{prob:efficient_stability} is of intrinsic interest, we are interested in it because it may help to resolve the Aldous--Lyons conjecture regarding the existence of non-cosofic invariant random subgroups of the free group \cite{Aldous_Lyons_Conj} (see the preprint \cite{BCLV_subgroup_tests}).
We now formulate a more specific version of Problem \ref{prob:efficient_stability}, which would be enough for this  application.

\begin{problem}[Locally testable codes with permutation coefficients]\label{prob:LTC_with_perm_coeff}
    Given a matrix $A\in M_{m\times n}(\FF_2)$, one can define the following presentation of $\FF_2^k$, where $k=n-\textrm{rank}(A)$: The set of generators is $S=\{x_i\}_{i\in [n]}$. The set of relations $R$ will contain:
    \begin{itemize}
        \item \emph{Order $2$ relations}: $\{x_i^2\}_{i\in [n]}$.
        \item \emph{Commutation relations}: $\{x_ix_jx_i^{-1}x_j^{-1}\}_{i,j\in [n]}$.
        \item \emph{Linear relations induced by $A$}: $\left\{\prod x_j^{A_{ij}}\right\}_{i\in [m]}$.
    \end{itemize}
 Now, a code $\Ker(A)$ is $\rho$-locally testable (when tested by the  matrix tester --- Algorithm 2 in Section 3.1 of \cite{CL_part1}) if and only if  it is  $\rho$-stable for the above presentation, when the range of the input functions is restricted to $\Sym(2)$. 
 What about the case where the inputs are not restricted to be with range $\Sym(2)$, but any $\Sym(n)$? Namely, what can be said about the homomorphism stability rate of the above presentation for a given code? 
 For the Hadamard code the resulting presentation is essentially the multiplication table of $\FF_2^k$, and is thus stable but expansive. What about more \emph{reasonably sized} codes, such as Reed--Muller codes?
Or, the recently constructed good locally testable codes \cite{LTC_DELLM,LTC_Panteleev_Kalachev}?
\end{problem}

One of our motivations  to study covering stability was that, in this framework, one can potentially translate stability results with unitary coefficients equipped with the Hilbert--Schmidt metric to the discrete setup of permutations with normalized Hamming metric. To that end, it would be useful to have a Poincare-type inequality (cf. Lemma 5.4 in \cite{CL_part1}) on the metric space $U(n)/\Sym(n)$, with constants independent of $n$. We learened from Assaf Naor that the metric space $M(n)/\Sym(n)$ \textbf{does not} have a Poincare-type inequality on it which is independent of $n$, due to \cite{andoni2018snowflake}. But, the case of $U(n)/\Sym(n)$ seems to be open, and will be sufficient for us to translate the results of \cite{quantum_soundness_tensor_codes} to the permutation setup.

\appendix
\section{\textbf{Weighted Cheeger lower bound}}\label{appendix:weighted_Cheeger}
    The following argument is \textbf{very} standard. It appeared in its uniformly weighted form in Proposition 5.5 and Remark 5.7 in Part I of this paper \cite{CL_part1}, using the $L^2$-Poincare inequality described in Lemma 5.4.  See also \cites{Dodziuk_Cheeger,Alon_Cheeger_2,ALON_Milman_Cheeger_1} for the original proofs. The main idea is to use the Reyleigh quotient definition of the second largest eigenvalue. We add it only for completeness. 
    
\begin{proof} [Proof of Proposition \ref{prop:weighted_Cheegr_lower_bound}]
    Let $\alpha\colon V(\cX)\to \FF_2$ be a non-coboundary $0$-cochain. Let $B$ be the support of $\alpha$, and let $\beta=\mu_0(B)$. Define $f\colon V(\cX)\to \mathbb{R}$ as follows: 
    \[
f(v)=\begin{cases}
    1-\beta & v\in B,\\
    -\beta & v\notin B.
\end{cases}
    \]
    Note that $$\langle f,{\bf 1}\rangle=\Ex_{v\sim \mu_0}[f(v)]=\mu_0(B)(1-\beta)-\mu_0(\bar B)\beta=0,$$ where $\bar B$ is the complement of $B$ in $V(\cX)$, and that 
    \[
    \langle f,f\rangle =\Ex_{v\sim \mu_0}[f(v)^2]=\beta(1-\beta)^2+(1-\beta)\beta^2=\beta(1-\beta).
    \]
    Hence, by the  Rayleigh quotient definition of the second eigenvalue of an operator, we have 
    \[\langle f,Af\rangle \leq \lambda \langle f,f\rangle=\lambda \beta(1-\beta).
    \]
    On the other hand, 
    \[
    \begin{split}
    \langle f,Af\rangle &=\Ex_{xy\sim \mu_1}[f(x)f(y)]\\
    &=-2\mu_1(E(B,\bar B))\beta(1-\beta)+\mu_1(E(B,B))(1-\beta)^2+\mu_1(E(\bar B,\bar B))\beta^2,
    \end{split}
    \]
    where $E(C,D)$ is the set of oriented edges in $\overrightarrow{\cX}(1)$ with origin   in $C$ and endpoint in $D$. Note that by definition, $2\mu_1(E(B,\bar B))=\mu_1(E(B,\bar B))+\mu_1(E(\bar B,B))=\Vert \delta \alpha\Vert$. Furthermore, $d(\alpha,Z^0(\cX,\FF_2))=\min(\beta,1-\beta)$. Now, when sampling an edge $xy\sim \mu_1$, the probability $x\in B$ is  $\beta$ by the fact $\mu_0$ is descending from $\mu_1$. Hence, $\mu_1(E(B,B))=\mu_1(B,\cX(0))-\mu_1(E(B,\bar B))=\beta-0.5\Vert \delta \alpha\Vert$. Similarly, $\mu_1(E(\bar B,\bar B))=(1-\beta)-0.5\Vert \delta \alpha \Vert$. All in all,
   \[
    \begin{split}
    \langle f,Af\rangle &=-\Vert \delta \alpha\Vert\beta(1-\beta)+(\beta-0.5\Vert \delta \alpha\Vert)(1-\beta)^2+(1-\beta-0.5\Vert \delta \alpha\Vert)\beta^2\\
    &=-0.5\Vert \delta \alpha\Vert+\beta(1-\beta).
    \end{split}
    \]
    Combining all, we get
    \[
    0.5\Vert\delta \alpha\Vert \geq (1-\lambda)\beta(1-\beta)\geq (1-\lambda)\cdot\frac{\min(\beta,1-\beta)}{2},
    \]
which finishes the proof by multiplying both sides of the inequality by $2$.
\end{proof}

\bibliographystyle{plain}
\bibliography{Bib}

\end{document}